\definecolor{gris}{gray}{0.45}
\newtheorem{thm}{Theorem}[subsection]
\newtheorem{lem}[thm]{Lemma}
\newtheorem{prop}[thm]{Proposition}
\newtheorem{prop/def}[thm]{Proposition/Definition}
\theoremstyle{definition}
\theoremstyle{remark}
\newtheorem{rem}[thm]{Remark}
\theoremstyle{plain}
\DeclareMathOperator{\Hom}{Hom}
\newcommand {\N}{\mathbb{N}}
\newcommand{\R}{\mathbb{R}}
\newcommand{\C}{\mathbb{C}} 
\newcommand{\D}{\mathbb{D}} 
\newcommand{\Z}{\mathbb{Z}}
 \definecolor{grass}{rgb}{0.14,0.72,0.2}
  \newcommand{\comE}[1]{ }         
\newcommand{\chign}[1]{\chi_{g,n}(#1)}
\newcommand{\MCG}{\Gamma_{g,n}}
\newcommand{\fMCG}{\hat{\Gamma}_{g,n}}
\newcommand{\fMCGplus}{\hat{\Gamma}_{g,n}^{\bullet}}
\newcommand{\MCGg}{\Gamma_{g}^1}
\newcommand{\liftMCG}{\hat{\Gamma}_{g,n}^\circ}
\newcommand{\Bn}{B_n}
\newcommand{\T}{\mathcal T}
\newcommand{\Tgn}{\mathcal{T}_{g,n}}
\newcommand{\forggn}{\pi_{g,n}}
\newcommand{\tautgn}{\mathrm{taut}_{\hat{\star}}}
\newcommand{\taut}{\mathrm{taut}}
\newcommand{\tautgnplusun}{\mathrm{taut}_{\hat{\star}^{\bullet}}}
\newcommand{\Mgn}{\mathcal{M}_{g,n}}
\newcommand{\Mgnplusun}{\mathcal{M}_{g,n+1}}
\newcommand{\classan}{{\mathrm{class}^+}}
\newcommand{\class}{\mathrm{class}}
\newcommand{\gf}{\Lambda_{g,n}}
\newcommand{\GL}{\mathrm{GL}_r\C}
\renewcommand{\emptyset}{\varnothing}
\newcommand{\triang}{\mathrm{Upp}}
\newcommand{\GLtw}{\mathrm{GL}_2\mathbb{C}}
\newcommand{\Aff}{\mathrm{Aff}(\mathbb{C})}
\DeclareMathOperator{\card}{\mathrm{card}}
\newcommand{\intint}[2]{\llbracket #1, #2\rrbracket}
\newcommand{\cardn}{\card}
\newcommand{\bigslant}[2]{{\raisebox{.0em}{$#1$}\raisebox{-.1em}{$/$}\raisebox{-.2em}{$ #2 $}}}
\newcommand{\morph}{\mathfrak{f}}
\newcommand{\familyCurve}{\mathcal{C}}
\newcommand{\familyBundle}{\mathcal{E}}
\newcommand{\initialBundle}{E}
\newcommand{\trivvb}{\mathbb{V}}
\newcommand{\Bgn}{\mathcal{B}_{g,n}}
\newcommand{\Bgnplusun}{\mathcal{B}_{g,n+1}}
\newcommand{\Rgn}{\mathcal{R}_{g,n}}
\newtheorem{bigthm}{Theorem}
\newcommand{\logD}{\mathrm{log}\, D}
 \newcommand{\clF}{\mathrm{cl}}
\newtheorem*{thmA1}{Theorem A1}
\newtheorem*{thmA2}{Theorem A2}
\newtheorem*{thmB1}{Theorem B1}
\newtheorem*{thmB2}{Theorem B2} 
\newcommand{\SLtwoZ}{\mathrm{SL}_2\mathbb{Z}}
\author[G. Cousin]{Ga\"el Cousin}
\address{GMA-IME\\Universidade Federal Fluminense\\
Campus do Gragoat\'a\\
Niter\'oi\\RJ \\ Brazil}
\email{gcousin@id.uff.br }
\author[V. Heu]{Viktoria Heu}
\address{ IRMA\\
 7 rue Ren\'e Descartes\\
 67084 Strasbourg\\France}
\email{heu@math.unistra.fr}
\begin{document}
\title{Algebraic isomonodromic deformations and \\the mapping class group}
\subjclass[2010]{14D05,14F35,20F36,34M56}
 
\begin{abstract}The  germ of the universal isomonodromic deformation of a logarithmic connection on a stable $n$-pointed genus $g$ curve always exists in the analytic category. The first part of this article investigates under which conditions it is the analytic germification of an algebraic isomonodromic deformation. 
Up to some minor technical conditions, this turns out to be the case if and only if the monodromy of the connection has finite orbit under the action of the mapping class group.  The second part of this work studies the dynamics of this action in the particular case of reducible rank $2$ representations and genus $g>0$, allowing to classify all finite orbits. Both of these results extend recent ones concerning the genus $0$ case.
\end{abstract}
  \thanks{The authors would like to warmly thank Gwena\"el Massuyeau for  discussions around the mapping class group. We are also grateful to the anonymous referee for useful suggestions.
This work took place at IRMA and LAREMA. It was supported by ANR-13-BS01-0001-01,
ANR-13-JS01-0002-01 and Labex IRMIA}
\maketitle
\tableofcontents    
 
\section{Introduction}\label{sec action fg}
\textbf{The mapping class group. } 
Let $g$ and $n$ be nonnegative integers. Let $\Sigma_g$ be a compact oriented real surface of genus $g$,  let $y^n=(y_1, \ldots, y_n)$ be a sequence of $n$ distinct points in $\Sigma_g$. We shall denote by $Y^n:=\{y_1, \ldots, y_n\}$ the corresponding (unordered) set of points.    
The (pure) mapping class group of $(\Sigma_g,y^n)$ is defined to be the set of orientation preserving homeomorphisms $h$ of $\Sigma_g$ such that $h(y_i)=y_i$ for all $i\in \intint{1}{n}:=\left\{ k \in \mathbb{Z}~\middle|~1\leq k\leq n\right\}$, quotiented by isotopies:
$$\MCG:=\bigslant{\mathrm{Homeo}_+(\Sigma_g, y^n)}{\{\mbox{isotopies relative to $Y^n$}\}\, .}$$
We can also consider homeomorphisms of $\Sigma_g$ that preserve the set $Y^n$, but do not necessarily preserve the labelling of the punctures. This leads to the full mapping class group
$$\fMCG:=\bigslant{\mathrm{Homeo}_+(\Sigma_g,Y^n)}{\{\mbox{isotopies relative to $Y^n$}\}\, .}$$
Note that we have an exact sequence of groups $$1\longrightarrow \MCG\longrightarrow \fMCG \longrightarrow\mathfrak{S}_n \longrightarrow 1,$$
where $\mathfrak{S}_n$ denotes the symmetric group of degree $n$. In particular, $\MCG$ is a subgroup of $\fMCG$ of finite index $n!$~.
Let now $y_0 \in \Sigma_g\setminus Y^n$ be a point.
We denote the fundamental group of $\Sigma_g\setminus Y^n$ with respect to the base point $y_0$ by 
  \begin{equation}\gf:=\pi_1(\Sigma_g \setminus Y^n,y_{0})\, .\end{equation}
  The composition $\alpha \, .\, \alpha'$ of two paths $\alpha, \alpha'\in \gf$ shall denote the usual concatenation ``first $\alpha$, then $\alpha'$''. 
For any group $G$, we may consider the space $\Hom(\gf, G)$ of representations as well as the set of representations modulo conjugation, which we shall denote 
\begin{equation}\chign{G}:= \bigslant{\Hom(\gf, G)}{G}\, .\end{equation}

\textbf{The mapping class group acts on $\chign{G}$. } 
Define the groups of orientation preserving homeomorphisms $h$ of $\Sigma_g$ such that $h(y_0)=y_0$ and
$h(y^n)=y^n$, respectively $h(Y^n)=Y^n$, modulo isotopy:
 \[\begin{array}{rll}\Gamma_{g,n+1}&:=&\bigslant{\mathrm{Homeo}_+(\Sigma_g,y^n, y_0)}{\{\mbox{isotopies relative to $Y^n \cup\{y_0\}$}\} \, , }\vspace{.3cm}\\
 \fMCG ^{\bullet}&:=&\bigslant{\mathrm{Homeo}_+(\Sigma_g, Y^n, y_0)}{\{\mbox{isotopies relative to $Y^n \cup\{y_0\}$}\}\, .} \end{array}\]
 Now $\fMCG ^{\bullet}$ naturally acts on the fundamental group 
$\gf$: for $h \in \fMCG^{\bullet}$ and $\alpha \in \gf$, we set 
$$\mathfrak{a}(h)(\alpha) :=h_*\alpha\, .$$ \textit{Via} the forgetful maps $\Gamma_{g,n+1}\to \MCG$ and $\fMCG ^{\bullet}\to \fMCG$ we obtain a commutative diagram
\[ \hspace{3 cm} \begin{xy}\xymatrix{\Gamma_{g,n+1} \ar@{->>}[d]\ar@{^{(}->}[r]&\fMCG ^{\bullet} \ar[r]^{\mathfrak{a} \textrm{~~~~~}}\ar@{->>}[d]& \mathrm{Aut}(\gf)\ar@{->>}[d]&\\ \MCG \ar@{^{(}->}[r]&\fMCG \ar[r]& \mathrm{Out}(\gf):\ar@{=}[r]&\bigslant{\mathrm{Aut}(\gf)}{\mathrm{Inn}(\gf)\, .}
}
\end{xy}\]
Indeed, any element $h\in \mathrm{Homeo}_+(\Sigma_g, y^n)$ may be lifted to an element $h_0\in \mathrm{Homeo}_+(\Sigma_g,y^n, y_0)$. Let $h_1\in \mathrm{Homeo}_+(\Sigma_g,y^n, y_0)$ be another representative. Then they are the extremities of an isotopy $(h_t)_{t\in [0,1]}$ relative to $Y^n$. We have a loop $\gamma \in \gf$ defined by $\gamma(t)=h_t(y_0)$. Then for any $\alpha \in \gf$, we have $\mathfrak{a}(h_1)(\alpha)=\gamma^{-1}\, .\, \mathfrak{a}(h_0)(\alpha)\, .\, \gamma$\, . 

In particular, for any group $G$, the mapping class group $\fMCG$ acts on the set $\chign{G}$, and this action lifts to an action of 
 $\fMCG ^{\bullet}$ on the space $\Hom(\gf, G)$. More precisely, for all $\rho\in \Hom(\gf, G), h\in \fMCG ^{\bullet}$ and $\alpha \in \gf$, we define
 \begin{equation} (h\cdot \rho )(\alpha): = \rho(\mathfrak{a}(h^{-1})(\alpha))\, .\end{equation}
 \vspace{.3cm}
 
\textbf{Application to isomonodromic deformations and a dynamical study. }
In this paper, we establish two results about finite orbits of the mapping class group action on $\chign{G}$ for $G=\GL$. These results and their respective proofs can be read independently. In Theorem  \ref{algebrization thm},  which will be stated in Section \ref{SecIntroAlg} and proven in Part \ref{partAlg}, we relate such finite orbits to the existence of an  algebraic universal isomonodromic deformation of a logarithmic connection over a curve, whose monodromy belongs to that orbit.  This motivates Theorem  \ref{mainthm dynamics},  which will be stated in Section \ref{SecIntroDyn}   
and proven in Part   \ref{partDyn}, classifying conjugacy classes of reducible rank 2 representations with finite orbit. To that end, we introduce a specific presentation of $\gf$ and explicit \textit{formulae} for the mapping class group action.  

\begin{rem}Recall that a  representation $\rho \in \Hom(\gf , \GL)$ is called \emph{irreducible} if the only subvector spaces of $\C^r$ that are stable under $\mathrm{Im}(\rho)$ are $\{0\}$ and $\C^r$. A \emph{semisimple} representation is a direct sum of irreducible representations. 
 \end{rem}

   \renewcommand\thesubsection{\thesection.\Alph{subsection}}
 \setcounter{subsection}{0} 
 \subsection{Algebraization of universal isomonodromic deformations}\label{SecIntroAlg}
We need to introduce some additional vocabulary before stating our main result, which can be seen as a criterion under which a GAGA-type theorem holds for isomonodromic deformations. In order to avoid having to introduce each definition twice, we adopt the $\upomega$-notation described in Table \ref{omeganotation}.  

\begin{table}[H]
$\begin{array}{|c|c|c|} \hline 
\upomega & \upomega -\textrm{manifold} &\upomega -\textrm{open}\\
\hline 
\textrm{analytic}& \textrm{\begin{tabular}{c}connected Hausdorff \\ complex analytic manifold\end{tabular}}& \textrm{open with respect to  Euclidean topology}\\
\hline 
\textrm{algebraic}& \textrm{ \begin{tabular}{c}smooth irreducible\\ quasi-projective variety over~$\C$ \end{tabular}}& \textrm{open with respect to Zariski topology}\\ 
\hline
\end{array}$
\vspace{.2cm}
\caption{ \label{omeganotation}}
\end{table}
 \vspace{-.3cm}

\textbf{Logarithmic connections. } Let $X$ be a  $\upomega$-manifold and let $D$ be a (possibly empty) reduced normal crossing divisor on $X$. Denote by $D^1, \ldots, D^n$ the irreducible components of $D$. A \emph{logarithmic $\upomega$-connection} of rank $r$ over $X$ with polar divisor 
$D$ is a pair $(E,\nabla)$, where $E\to X$ is a $\upomega$-vector bundle of rank $r$ over $X$, whose sheaf of sections we also denote by ${E}$, and $\nabla$ is a $\C$-linear morphism 
$$\nabla : E \to E\otimes \Omega^1_X(\logD)\, , $$
which satisfies the Leibniz rule
$$\nabla (f\cdot e) =f\cdot \nabla(e)+e\otimes \mathrm{d}f\, $$
for any $f\in \mathcal{O}_X(\Delta)\, , e\in E(\Delta)$, where $\Delta \subset X$ is any $\upomega$-open subset. 
 We require $D$ to be minimal in the sense that for any $i \in \intint{1}{n}$, $\nabla$ does not factor through \[E\otimes \Omega^1(\mathrm{log}(D-D^i))\hookrightarrow E\otimes \Omega^1_X(\logD).\]
Such a logarithmic connection $(E, \nabla)$ is called \emph{flat} if its curvature $\nabla^2$ 
is zero.

We are particularly interested in the case where $X$ is a smooth projective curve (a compact Riemann surface). Since then $X$ is of complex dimension one, any logarithmic connection over $X$ is automatically flat. Moreover, since then $X$ is projective, any analytic logarithmic connection over $X$ is isomorphic to the analytification of a unique algebraic logarithmic  connection over $X$ by one of Serre's GAGA theorems \cite[Prop. $18$]{GAGA}. \vspace{.3cm}

\textbf{Monodromy. } 
The notion of the monodromy representation of a flat connection varies slightly in the literature. For introductory and technical purposes, let us give the definition we are going to use. This definition can only be formulated in  the analytic category; in the algebraic case the monodromy representation is defined \textit{via} analytification. Let $X$ and $D$ be as above ($X$ has arbitrary dimension). Denote $X^0:=X\setminus D$.  Let $(E,\nabla)$ be an  analytic  logarithmic connection over $X$ with polar divisor $D$. Assume moreover that this analytic connection is flat, which is equivalent to it being integrable, \textit{i.e.}   $\mathcal{S}:=\ker (\nabla|_{X^0})$ is a locally constant sheaf of rank $r$ over $X^0$.   
Let $\Sigma$ and $Y\subset \Sigma$ be topological spaces such that there is a homeomorphism 
$$\Phi : (\Sigma,Y)\stackrel{\sim}{\to}  (X,D)\, .$$  
Fix such a homeomorphism and fix a point ${y}_0\in \Sigma\setminus Y$.  Denote ${x}_0:=\Phi({y}_0)$. For any path $\gamma : [0,1] \to \Sigma \setminus Y$, the pull back $(\Phi\circ \gamma)^*\mathcal{S}$ 
is locally constant and thus isomorphic to a constant sheaf. Hence $\gamma$ defines an isomorphism $\gamma(\mathcal{S}) : \mathcal{S}_{\gamma(1)}\to \mathcal{S}_{\gamma(0)}$. This isomorphism is invariant by homotopy relative to $\{\gamma(0)\, , \gamma(1)\}$ and satisfies $\gamma_1\, .\, \gamma_2(\mathcal{S})=\gamma_1(\mathcal{S})\circ \gamma_2(\mathcal{S})$ for any pair of paths $(\gamma_1,\gamma_2)$. We obtain a representation $\pi_1(\Sigma\setminus Y,  {y}_0) \to \mathrm{GL}(\mathcal{S}_{ {x}_0})$. \textit{Via} an  isomorphism $\mathcal{S}_{x_0}\to \C^r$, one deduces a (non-canonical) representation
$\rho_{\nabla} \in \Hom(\pi_1(\Sigma,  {y}_0), \GL) $ and
a canonical conjugacy class of representation $$[\rho_{\nabla}]\in  \bigslant{\Hom(\pi_1(\Sigma\setminus Y, {y}_0), \GL)}{\GL}\, . $$ We refer to  $\rho_{\nabla}$ as the \emph{monodromy representation} and to  $[\rho_{\nabla}]$ as the \emph{monodromy} of $(E, \nabla)$ with respect to $\Phi$. Conversely, given $\Phi$, given a conjugacy class of representation $[\rho]\in  \Hom(\pi_1(\Sigma\setminus Y,  {y}_0), \GL)/{\GL}$ and a compatible choice of \emph{mild transversal models} (see Section \ref{Sec mild trans}), there is a flat  logarithmic analytic connection $(E, \nabla)$ over $X$, unique up to isomorphism, inducing these transversal models and such that $[\rho_\nabla]=[\rho]$ (see \cite[Th. $3.3$]{cousinisom}, adapted from  \cite[Prop. $5.4$]{MR0417174}). In our work, the use of the marking $\Phi$ is essential, as we wish to compare the monodromies of connections over various homeomorphic curves.
\vspace{.3cm}

\textbf{Isomonodromy. } 
 Let $C$ be a smooth projective curve of genus $g$, let $D
 $ be a reduced divisor of degree $n$ on $C$. Let $(\initialBundle, \nabla_0)$ be a logarithmic connection over $C$ with polar divisor~$D$. 
 
 A $\upomega$-\emph{isomonodromic deformation of}  $(C, \initialBundle,\nabla_0)$ 
 consists in the following data:  
\begin{itemize}
\item a $\upomega$-family $(\kappa : \familyCurve\to T, \mathcal{D})$ of $n$-pointed smooth curves of genus $g$ (see Section \ref{SecSetup}); 
  \item a flat logarithmic $\upomega$-connection $(\familyBundle,\nabla)$ over $\familyCurve$ with polar divisor $\mathcal{D}$;
  \item a point $t_0$  in $T$ ; we denote $\familyCurve_{t_0}:=\kappa^{-1}(\{t_0\})$ and $\mathcal{D}_{t_0}:=\mathcal{D}|_{\familyCurve_{t_0}}$; and
\item  an isomorphism of pointed curves  with logarithmic connections
 $$(\psi,\Psi):   ((C, D),(\initialBundle,\nabla_0))\stackrel{\sim}{\to}((\familyCurve_{t_0}, \mathcal{D}_{{t_0}}),(\familyBundle,\nabla)|_{\familyCurve_{t_0}})\, .$$
  \end{itemize}

Why are such deformations called isomonodromic? Again we have to work in   the analytic category. Up to shrinking $T$ to a sufficiently small polydisc $\Delta$ containing $t_0$, the family $\kappa : (\familyCurve,\mathcal{D})\to \Delta$ is topologically trivial. Hence there is a homeomorphism
$$\Phi : (\Sigma_g,Y^n)\times \Delta \stackrel{\sim}{\to} (\familyCurve,\mathcal{D})$$ commuting with the natural projections to $\Delta$.
Now for any $t\in \Delta$, the morphism $$\pi_1(\Sigma_g\setminus Y^n, y_0) \longrightarrow \pi_1( (\Sigma_g\setminus Y^n)\times \Delta , (y_0,t) )\, , $$ induced by the inclusion of the fiber at $t$,  is an isomorphism. On the other hand, $(\familyBundle,\nabla)|_{\familyCurve_{t}}$ is a logarithmic connection over $\familyCurve_t$ with polar divisor $\mathcal{D}_{t}$. By flatness of $\nabla$, its monodromy representation with respect to $\Phi|_{t}$ and the base point $y_0$ can be identified with the monodromy representation of $(\familyBundle,\nabla)$ with respect to $\Phi$ and the base point $(y_0,t)$. For $t=t_0$, this means we can identify the monodromy representation of $(\familyBundle,\nabla)$ over $\familyCurve$ with respect to $\Phi$ with the monodromy representation of $(\initialBundle,\nabla_0)$ over $C$ with respect to $\psi^{-1}\circ \Phi|_{t_0}$. 
In that sense, we may say that with respect to some continuous ``base point section'' $t\mapsto (y_0,t)$, the monodromy representation along a germ of isomonodromic deformation is constant and given by the monodromy representation of $(\initialBundle,\nabla_0)$. More generally, one can say that an isomonodromic deformation induces a topologically locally trivial family of monodromy representations, leading to a phenomenon of monodromy of the monodromy representation. The latter will become  tangible in Section \ref{SecMonMon}. \vspace{.3cm}

\textbf{Statement of Theorem  \ref{algebrization thm}. }
Following \cite[Th. $3.4$]{MR2667785}(see also \cite{Malgrange, Krichever}), any triple $(C, \initialBundle,\nabla_0)$ as before admits a universal analytic isomonodromic deformation, which is unique up to unique isomorphism, and whose parameter space $T$ is the Teichm\"uller space $\T_{g,n}$. This universal analytic isomonodromic deformation satisfies a universal property with respect to germs of analytic isomonodromic deformations of $(C, \initialBundle,\nabla_0)$
.
 A universal algebraic isomonodromic deformation of  $(C, \initialBundle,\nabla_0)$, if it exists, would be  an algebraic isomonodromic deformation whose analytic germification is isomorphic to the germification of the universal analytic isomonodromic deformation of  $(C, \initialBundle,\nabla_0)$. In Section \ref{SecDefDef}, we give an alternative definition and state a \hyperlink{propunivalg}{universal property of universal algebraic isomonodromic deformations}. Our main result is the following.    
\begingroup
\setcounter{bigthm}{0} 
\begin{bigthm} \label{algebrization thm} Let $C$ be a smooth complex projective curve of genus $g$. Let $D$ be a set of $n$ distinct points in $C$ and let $\Phi : (\Sigma_g , Y^n)\to(C, D)$ be an orientation preserving homeomorphism.
Let $(\initialBundle,\nabla_0)$ be an algebraic logarithmic connection of rank $r$ over $C$ with polar divisor $D$ and monodromy $[\rho]\in \chign{\GL}$ with respect to $\Phi$.  
 Assume that $2g-2+n>0$ and that $\nabla_0$ is mild. If $r>2$, then assume further that $\rho$ is semisimple. The following are equivalent:
\begin{enumerate}
\item \label{algitem alg}There is a universal algebraic isomonodromic deformation of $(C, \initialBundle, \nabla_0)$.\vspace{.1cm}
\item \label{algitem finite}The orbit $\MCG \cdot [\rho]$ in $\chign{\GL}$ is finite.
\end{enumerate}
\end{bigthm}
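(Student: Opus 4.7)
The plan is to relate both conditions to the universal \emph{analytic} isomonodromic deformation of $(C, E_0, \nabla_0)$ over Teichm\"uller space $\Tgn$, provided by Malgrange--Krichever. The mapping class group $\MCG$ acts on $\Tgn$ with orbifold quotient $\Mgn$, and this action lifts to the total space of the universal analytic deformation; the induced action on the monodromy conjugacy class is precisely the action on $\chign{\GL}$ described in the introduction. Consequently, for any subgroup $H\subset \MCG$, the universal analytic family descends to $\Tgn/H$ as a family of isomorphism classes of marked logarithmic connections if and only if $H$ fixes $[\rho]$. Both implications will be read off from this dictionary.

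For (1) $\Rightarrow$ (2), suppose an algebraic universal isomonodromic deformation $(E,\nabla) \to (X, D) \to T$ is given. Its analytification satisfies the universal property of the analytic deformation, so the classifying morphism $T^{an} \to \Mgn^{an}$ must have dense image and $T$ must share dimension $3g-3+n = \dim \Tgn$ with the parameter space of the analytic universal family, forcing $T^{an}\to \Mgn^{an}$ to be generically \'etale. Analytically $T$ is therefore a finite covering of a dense open subset of $\Mgn$, corresponding to a finite-index subgroup $H\subset \MCG$. Because the algebraic connection is defined on the whole family, the deck transformations of this covering preserve the isomorphism class of the connection on the fibers of $X \to T$, and hence preserve $[\rho]$; thus $H$ lies in the stabilizer of $[\rho]$, which is therefore of finite index, giving a finite orbit.

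For (2) $\Rightarrow$ (1), let $H\subset \MCG$ be the stabilizer of $[\rho]$, of finite index by hypothesis. The quotient $\Tgn/H$ is a finite orbifold covering of $\Mgn$ and carries a canonical algebraic structure (passing if needed to a further finite cover of $\Mgn$ with enough level structure to remove stack issues). Pulling back the tautological family of pointed curves produces an algebraic family $(X, D) \to T$. Since $H$ stabilizes $[\rho]$, the universal analytic isomonodromic deformation descends to an analytic logarithmic connection $(E^{an}, \nabla^{an})$ on $X^{an}$. The remaining task is to algebraize this connection, for which the natural strategy is to construct an algebraic relative moduli space $\calM_{dR} \to T$ of logarithmic connections on the fibers of $X \to T$ with the formal local data prescribed by $(E_0, \nabla_0)$ at the punctures. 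The isomonodromic deformation provides an analytic section of this moduli space; by rigidity of connections with fixed monodromy conjugacy class (a discrete set of points in each fiber of $\calM_{dR}$) combined with GAGA applied fiberwise on each projective curve, this section is in fact algebraic. Pulling back the tautological connection along this algebraic section produces the required algebraic $(E,\nabla)$, whose analytic germification is the universal analytic deformation by construction.

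The hard part is the algebraization step above: the base $T$ is algebraic but not proper, so one cannot appeal to relative GAGA directly. Rigorous construction of the algebraic de Rham moduli space with prescribed formal type at poles uses the mildness of $\nabla_0$ (which ensures the well-behaved local models of Section \ref{Sec mild trans}, following \cite[Prop.~$5.4$]{MR0417174}), while separatedness of this moduli space at $[\nabla_0]$ --- essential for the rigidity argument to globalize over $T$ --- is precisely where the semisimplicity hypothesis enters when $r>2$, since it is needed to guarantee Hausdorffness of the relevant quotient by $\GL$-conjugation. The condition $2g-2+n>0$ ensures that $\Tgn$ is the appropriate analytic universal cover and that $\Mgn$ has the expected dimension, so that the dimension-counting argument in (1) $\Rightarrow$ (2) is valid.
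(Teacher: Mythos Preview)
Your direction $(1)\Rightarrow(2)$ is roughly correct in spirit, though the paper's argument is more careful: it produces a base-point section via an \'etale base change, obtains a semi-direct product splitting $\pi_1(X^0)=\gf\rtimes\pi_1(T)$, and shows that the image of $\pi_1(T)$ in $\MCG$ (via the classifying map and the tautological morphism) has finite index because the classifying map is dominant; since this image lies in the stabilizer of $[\rho]$, the orbit is finite.

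The direction $(2)\Rightarrow(1)$ contains a genuine gap. You descend the analytic universal connection to $T=\Tgn/H$ and then propose to algebraize it by interpreting it as an analytic section of a relative de~Rham moduli space $\calM_{dR}\to T$, claiming this section is algebraic by ``rigidity plus fiberwise GAGA''. This does not work: fiberwise GAGA tells you each individual connection $(E,\nabla)|_{X_t}$ is algebraic, but says nothing about algebraicity of the family over the non-proper base $T$. The locus in $\calM_{dR}$ with monodromy class $[\rho]$ is indeed discrete in each fiber, but it is an \emph{analytic} condition, not a priori algebraic --- so you are assuming what you need to prove. You correctly identify this as ``the hard part'' but do not actually resolve it.

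The paper's approach sidesteps this entirely. Rather than descending an analytic object and algebraizing, it works purely on the level of representations: after an \'etale base change giving a section $\sigma$, one has $\pi_1(X\setminus D)=\gf\rtimes\pi_1(T)$, and the problem becomes extending $\rho_{\nabla_0}\in\Hom(\gf,\GL)$ to a representation of the full $\pi_1(X\setminus D)$. The finite-orbit hypothesis guarantees (after a further finite \'etale base change) that the $\pi_1(T)$-action on $\gf$ preserves $[\rho_{\nabla_0}]$, and one then constructs $\rho_B\in\Hom(\pi_1(T),\GL)$ implementing this by conjugation. Once the extended representation exists, a single application of the logarithmic Riemann--Hilbert correspondence on the total space $X$ (in its algebraic form, using Deligne regularity and GAGA on a smooth compactification of $X$) produces the algebraic connection directly.

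This also clarifies where semisimplicity actually enters: not in any separatedness of a moduli space, but in the extension step. For semisimple $\rho_{\nabla_0}=\bigoplus\rho^i$, each irreducible summand yields a well-defined $\mathrm{PGL}_{r_i}\C$-representation of $\pi_1(T)$, and one needs a lifting theorem (requiring a further \'etale cover) to promote this to $\mathrm{GL}_{r_i}\C$. For $r=2$ non-semisimple, the extension is handled by an ad~hoc argument exploiting the upper-triangular structure.
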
 \endgroup
\begin{rem} 
Note that the orbit $\MCG \cdot [\rho]$ in $\chign{\GL}$ does not depend on the choice of $\Phi$. 
 \end{rem}
\begin{rem}
As we will see in Section~\ref{Sec mild trans}, the mildness assumption on $\nabla_0$ can always be achieved after a suitable birational gauge transformation. In this sense, under the mentioned restriction on the monodromy, Theorem~\ref{algebrization thm}  asserts that any finite branching universal isomonodromic deformation can be parametrized by algebraic functions. 
\end{rem}

We prove this theorem by adapting the proof for the special case of genus $g=0$, which has been established in \cite{cousinisom}.
 The main ingredients of the proof of Theorem \ref{algebrization thm} are: the logarithmic Riemann-Hilbert correspondence (see Section \ref{Sec mild trans}); the introduction of a base point section for a family of punctured curves and the splitting of the fundamental group of the total space of the family (see Section \ref{SecSplitting}), together with its relation to the mapping class group (see Section \ref{SecNotBul}).  
Both implications to be proven appear as special cases of stronger results: \hyperlink{thA1}{Theorem A1} and \hyperlink{thA2}{Theorem A2}, respectively. 
We give their statements and proofs in Section \ref{SecFinPreuve}. 

The statement of Theorem \ref{algebrization thm} is natural in the following sense. As we recall in Section  \ref{SecModuli},  the (algebraic) moduli space $\Mgn$ of stable smooth $n$-pointed genus $g$ curves is the quotient of the (analytic) Teichm\"uller space $\Tgn$ by the natural action of $\MCG$ .  Intuitively, a universal algebraic isomonodromic deformation should be the quotient of the universal analytic isomonodromic deformation with respect to a sufficiently large subgroup of $\MCG$ that fixes $[\rho]$.

  \setcounter{subsection}{1} 

\subsection{Dynamical study of finite orbits in the reducible rank $2$ case}\label{SecIntroDyn}  Since the pure mapping class group is a  finite index subgroup of the full mapping class group,  for any representation $\rho \in \Hom(\gf, G)$, the conjugacy class  $[\rho]\in \chi_{g,n}(G)$ has finite orbit under $\MCG$ if and only if it has finite orbit under $\fMCG$. Note that the size of $\fMCG\cdot[\rho]$ equals the size of the set of conjugacy classes of $m$-tuples
$$\bigslant{{\left\{ (\rho'(s_1), \ldots, \rho'(s_m))~\middle|~\rho'\in \Hom(\gf, G)\textrm{ and }{[}\rho'{]}\in \fMCG\cdot[\rho] \right\}}}{G} \, , $$ where $\{s_1, \ldots, s_m\}$ is a set of generators of $\gf$. 
We introduce a specific presentation 
 $$\gf= \left\langle\alpha_1, \beta_1, \ldots, \alpha_g, \beta_g,\gamma_1,\ldots,\gamma_n ~\vert~ [\alpha_1, \beta_1]\cdots[\alpha_g, \beta_g]\gamma_1\cdots\gamma_n=1\right\rangle\, $$ and a subgroup 
$$\liftMCG =\left\langle  \tau_{1}, \ldots, \tau_{3g+n-2}, \sigma_1, \ldots, \sigma_{n-1}\right\rangle\,  $$   of $\fMCG^\bullet$ which, as such, acts on $\Hom(\gf, G)$, and which is sufficiently large in the sense that the $\liftMCG$-orbit of $[\rho]\in \chi_{g,n}(G)$ equals its $\fMCG$-orbit.  Moreover, the  action of 
     $\liftMCG $ on $\gf$ can be explicitely described (see Section \ref{Sec explicit}). Table \ref{A} summarizes  the explicit action of the generators $ \tau_{1}, \ldots, \tau_{3g+n-2}, \sigma_1, \ldots, \sigma_{n-1}$ of $\liftMCG$ on the generators $ \alpha_1, \beta_1, \ldots,$ $ \alpha_g, \beta_g,\gamma_1,\ldots,\gamma_n$ of $\gf$. Here we only indicate the action on those of our generators of  $\gf$  that are not fixed by the action of the generator of $\liftMCG$ under consideration.

 \begin{table}[htbp]
$\begin{array}{|ll| lcl l|}
\hline
\tau_{2k}\,, &k\in\intint{1}{g} & \alpha_k & \mapsto& \alpha_k \beta_k&\\
\hline
\tau_{2k-1}\,, & k\in\intint{1}{g} & \beta_k & \mapsto& \beta_k\alpha_k&\\
\hline
\tau_{2g+k}\,,& k\in \intint{1}{g-1}&\alpha_{k+1}&\mapsto& \Theta_k^{-1} \alpha_{k+1}&  \\
&&  \alpha_{k} &\mapsto& \alpha_{k}\Theta_k&\\
&&\beta_{k}&\mapsto& \Theta_k^{-1}\beta_{k}\Theta_k& \\
&&\textrm{where}&\Theta_k=&\alpha_{k+1}\beta_{k+1}^{-1}\alpha_{k+1}^{-1}\beta_{k}&\\
\hline
\tau_{3g-1+k}\,,  & k\in\intint{1}{n-1}   & \alpha_g & \mapsto& \alpha_g \Xi_k& \\
&&\beta_g & \mapsto& \Xi_k^{-1}\beta_g\Xi_k&\\
&&\gamma_i & \mapsto& \Xi_k^{-1} \gamma_i \Xi_k\,,&i\in\intint{1}{k} \\
&   &\textrm{where}& \Xi_k =& (\gamma_1\ldots \gamma_k)^{-1}\beta_g&\\
\hline
\sigma_{k}\,, &k\in\intint{1}{n-1}  &\gamma_k & \mapsto& \gamma_k\gamma_{k+1}\gamma_k^{-1}& \\
&&\gamma_{k+1}&\mapsto& \gamma_k&\\  \hline
\end{array}
$\\$ $
\caption{Action of $\liftMCG$ on $\gf$. \label{A}}
\end{table}

  We then apply this explicit description of the mapping class group action to the specific study of finite $\MCG$-orbits on $\chi_{g,n}(\GLtw)$  that correspond to reducible representations. 
  For $g=0$, this study has been completely carried out in \cite{cousin2016finite}. 
 In this special case, the study can be reduced to linear dynamics. To explain this, recall that any reducible representation $\rho \in \mathrm{Hom}(\gf, \GLtw)$ is conjugated to the tensor product of a character  $\rho_{\C^*}\in \Hom(\gf, \C^*)$ and an affine representation $\rho_{\mathrm{Aff}}\in \Hom(\gf, \mathrm{Aff}(\mathbb{C}))$:  $$[\rho]=[ \rho_{\C^*}\otimes \rho_{\mathrm{Aff}}]\, .$$ Then, $[\rho]$ has finite orbit under $\MCG$ in $\chign{\GLtw}$ if and only if $[\rho_{\C^*}]$ and $[\rho_{\mathrm{Aff}}]$ have finite orbit under $\MCG$ in $\chign{\C^*}$ and $\chign{\mathrm{Aff}(\C)}$ respectively. For $g=0$, the pure mapping class group acts trivially on $\chign{\C^*}$ and on the linear part of $\rho_{\mathrm{Aff}}$. Hence in the special case $g=0$, the study of finite orbits reduces to the study of a certain linear action on the translation part of $\rho_{\mathrm{Aff}}$.  
 
 For $g>0$, the study of finite orbits of conjugacy classes of reducible $\GLtw$-representations also reduces to the case of scalar and affine representations, but the linear part of  $\rho_{\mathrm{Aff}}$ is no longer invariant and there is no effective means to reduce the study to linear dynamics. However, Table~\ref{A} allows to study the orbits explicitely.  In the case $g=1$ and $n>0$, we find a particular type of representations whose conjugacy classes have finite orbit under $\MCG$, namely the representations $\rho_{\mu, \mathbf{c}}  \in \Hom (\gf ,\GLtw)$ defined by $$\rho_{\mu, \mathbf{c}} (\alpha_1):=\begin{pmatrix} \mu  & 0 \\ 0 & 1\end{pmatrix} \quad  \rho_{\mu, \mathbf{c}}(\beta_1):=\begin{pmatrix} 1 & -\frac{1}{\mu-1} \\ 0 & 1\end{pmatrix} \quad  \rho_{\mu, \mathbf{c}}(\gamma_i):=\begin{pmatrix} 1  & c_i \\ 0 & 1\end{pmatrix} \quad \forall i\in \intint{1}{n}\,  $$ where $\mu\in \C^*\setminus\{1\}$ is a root of unity and $\mathbf{c}=(c_1, \ldots, c_{n})\in \C^{n}$ with $\sum_{i=1}^{n}c_i=1$. Note that the condition $\sum_{i=1}^{n}c_i=1$ is necessary for $\rho_{\mu, \mathbf{c}}$ to be well-defined.  
The complete classification,  for every $g>0$ and $n\geq 0$, of reducible rank-$2$ representations with finite $\MCG$-orbit   is the following.

 \begingroup
\setcounter{bigthm}{1} 
\begin{bigthm}\label{mainthm dynamics} Let $g>0, n\geq 0$.
Let $\rho \in \Hom (\gf ,\GLtw)$ be a reducible representation. Consider its conjugacy class $[\rho] \in \chi_{g,n}(\GLtw)$. Then the orbit $\MCG \cdot [\rho]$ is finite if and only if one of the following conditions is satisfied.\vspace{.2cm}
\begin{enumerate}
\item \label{case1}The representation $\rho$ is a direct sum of scalar representations with finite images.\vspace{.2cm}
\item \label{case2}We have $g=1$, $n>0$,  there are a root of unity $\mu\in \C^*\setminus\{1\}$ , $\mathbf{c}=(c_1, \ldots, c_{n})\in \C^{n}$ with $\sum_{i=1}^{n}c_i=1$ and a scalar representation $\lambda$ with finite image such that $$[\rho] \in \MCG\cdot [\lambda \otimes \rho_{\mu, \mathbf{c}}]\, .\vspace{.2cm}$$ 
 \end{enumerate}
Moreover, if the orbit $\MCG\cdot[\rho]$ is finite, we can give an estimate for its cardinality, which for $\rho=\lambda_1\oplus\lambda_2$ and $\rho=\lambda \otimes \rho_{\mu, \mathbf{c}}$ in the cases $(\ref{case1})$ and $(\ref{case2})$ respectively is  \vspace{.2cm}
\begin{enumerate}
\item  $\hspace{.18cm} \frac{1}{2}\cdot \max\left\{\cardn (\mathrm{Im}(\lambda_i))^{2g-1}~\middle|~i\in \{1,2\}\right\}~\leq~\cardn (\MCG \cdot [\rho])~\leq~\cardn (\mathrm{Im}(\rho))^{2g}$   and\vspace{.2cm}
\item  $\hspace{.45cm}\max\left\{N_2\, , \, \phi(N)(2N-\phi(N))N^{n'-1}\right\}~\leq~\cardn (\MCG\, \cdot\,  [\rho])~\leq~(N^2-1)N^{n'-1}N_2^{2}\, ,$\vspace{.2cm}\\
where  $\phi$ denotes the Euler totient function,  $n':=\cardn \left\{ i\in\intint{1}{n}~|~\rho(\gamma_i)\not \in \C^*I_2\right\}$, $N:=\mathrm{order}(\mu)$ and $N_2:=\cardn (\mathrm{Im}(\lambda))$.  \end{enumerate}
 \end{bigthm}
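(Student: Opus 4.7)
The plan is to place $\rho$ in upper triangular form, extract the diagonal characters $\lambda_1,\lambda_2$, and analyze the off-diagonal cocycle via Table~\ref{A}. Reducibility gives $\rho(x)=\bigl(\begin{smallmatrix}\lambda_1(x)&b(x)\\ 0&\lambda_2(x)\end{smallmatrix}\bigr)$, and the unordered pair $\{[\lambda_1],[\lambda_2]\}$ is a conjugacy invariant of $[\rho]$. So finiteness of the $\MCG$-orbit of $[\rho]$ forces finiteness of the $\MCG$-orbits of the characters $[\lambda_i]$ in $\chign{\C^*}$. Since characters factor through $H_1(\Sigma_g\setminus Y^n;\Z)$ on which $\MCG$ acts via $\mathrm{Sp}(2g,\Z)$ on the $(\alpha_i,\beta_i)$-part together with permutations on the $\gamma_i$'s, I would verify (using the explicit transvections $\tau_{2k-1},\tau_{2k}$ of Table~\ref{A}) that a character has finite $\MCG$-orbit if and only if it has finite image---the key input being irreducibility of the standard $\mathrm{Sp}(2g,\Z)$-representation on $\C^{2g}$, which forbids finite orbits on non-torsion characters in that direction.

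Writing $\rho=\lambda_2\otimes\tilde\rho$ with $\tilde\rho:\gf\to\Aff$ reduces the problem to an affine representation with torsion linear part $\mu:=\lambda_1\lambda_2^{-1}$ and translation cocycle $c\in Z^1(\gf,\C_\mu)$. The class $[c]\in H^1(\gf,\C_\mu)$ vanishes exactly when $\rho$ is semisimple, in which case $\rho=\lambda_1\oplus\lambda_2$ lies in case~\eqref{case1}; the converse is immediate, since a direct sum of finite-image scalars factors through a finite group and there are only finitely many homomorphisms $\gf\to G$ for a fixed finite group $G$. It remains to treat the non-semisimple case $[c]\neq 0$.

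The analysis of the non-semisimple case proceeds by direct computation of the $\MCG$-action on the pair $(\mu,[c])$ using the formulas of Table~\ref{A} for the $\tau$- and $\sigma$-generators. For $g\geq 2$, the action of the $\tau$-generators combines the $\mathrm{Sp}(2g,\Z)$-dynamics on $\mu$ with a twisted linear action on $[c]$; a case-by-case analysis on $\mu$ (trivial versus non-trivial torsion, and whether it is supported only on $\gamma_i$'s or also on $\alpha_i,\beta_i$'s) should show that every non-trivial finite orbit collapses, forcing $[c]=0$ and contradicting the hypothesis. For $g=1$ the relevant group is $\SLtwoZ$, whose action admits non-trivial finite orbits. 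A direct solution of the orbit equations from Table~\ref{A} on the values $c(\beta_1),c(\gamma_i)$ and $\mu$, coupled with the constraint $[\rho(\alpha_1),\rho(\beta_1)]\rho(\gamma_1)\cdots\rho(\gamma_n)=I$, forces (up to conjugation and twist by a finite-image scalar) the normal form $\lambda\otimes\rho_{\mu,\mathbf{c}}$ of case~\eqref{case2}. The restriction $n>0$ arises because for $n=0$ the commutator $[\rho(\alpha_1),\rho(\beta_1)]$ must equal the identity, which forces $\mu=1$ and splits the extension.

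Finally the cardinality estimates come from explicit enumeration. In case~\eqref{case1}, the upper bound $|\mathrm{Im}(\rho)|^{2g}$ counts homomorphisms $\gf\to\mathrm{Im}(\rho)$ with $\gamma_i$-values prescribed up to the permutation multiset (the $\gamma_i$-coordinates are $\MCG$-fixed modulo permutation), and the lower bound $\tfrac12\max|\mathrm{Im}(\lambda_i)|^{2g-1}$ exhibits an explicit $\mathrm{Sp}(2g,\Z)$-suborbit of primitive characters in $(\Z/N)^{2g}$, the factor $\tfrac12$ accounting for the Weyl swap $\lambda_1\leftrightarrow\lambda_2$. In case~\eqref{case2}, the action on $(\mu,\mathbf{c},\lambda)$ separates orbit contributions bounded by $\phi(N)$ (primitive $N$-th roots $\mu$), $N^{n'-1}$ (the $c_i$'s, constrained by $\sum c_i=1$), and $N_2^2$ (the twist $\lambda$), with matching lower bounds via specific sub-orbits. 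The main obstacle will be the $g=1$ classification: ruling out sporadic non-trivial finite orbits under $\SLtwoZ$ combined with the puncture-braiding generators $\sigma_k$ and $\tau_{3g-1+k}$ requires a delicate fixed-point and stabilizer analysis performed directly on the explicit formulas of Table~\ref{A}.
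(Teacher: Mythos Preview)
Your strategic outline matches the paper's architecture almost exactly: upper-triangularize, split off the scalar part, reduce to an affine representation, treat the linear part via the symplectic action on homology, and then separate $g=1$ from $g\geq 2$. The decomposition $[\rho]=[\lambda\otimes\rho_{\mathrm{Aff}}]$ and the observation that the cocycle class $[c]$ vanishes precisely in the semisimple case are correct and are precisely what the paper uses (its Lemmas on the scalar/affine decomposition). So the approach is right; what is missing is the execution of the two hardest steps, which you flag only as intentions.

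\textbf{Gap 1: the exclusion for $g\geq 2$.} You write that ``a case-by-case analysis on $\mu$ \dots\ should show that every non-trivial finite orbit collapses.'' This is exactly where the paper does substantial work, and it is not a routine case split. The paper first proves a non-abelian normalization lemma (if every $\langle\rho'(\alpha_g),\rho'(\beta_g)\rangle$ in the orbit is abelian then $\rho$ is abelian), then puts $\rho$ into a ``prepared form'' with $\rho(\alpha_g)=\mu^{m_g}z$, $\rho(\beta_g)=z+1$, all $\rho(\beta_i)$ translations, and all $\rho(\gamma_j)$ translations. The elimination then requires applying carefully chosen \emph{compositions} of twists---e.g.\ $\tau_{2g-3}\circ\tau_{2g}\circ\tau_{2g-1}^{-1}\circ\tau_{2g}$ conjugating $\tau_{3g-1}$---to force $a=b=0$ and $\mu^{m_{g-1}}=\mu^{-m_g}$, and then one further twist to reach a contradiction. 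None of these moves is suggested by a cohomological or representation-theoretic framing; they are found by hand. Your proposal gives no indication of which twists to combine or why the resulting constraints are incompatible.

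\textbf{Gap 2: sufficiency for $g=1$.} You argue that the orbit equations force the normal form $\rho_{\mu,\mathbf{c}}$, but you do not prove the converse: that $[\rho_{\mu,\mathbf{c}}]$ actually \emph{has} finite orbit. The paper does this by exhibiting an explicit finite set $O_{\mu,\mathbf{c}}$ of $(n+2)$-tuples of affine maps, indexed by certain pairs $(k_1,k_2)$ with $\gcd(k_1,k_2,N)=1$ and by rescalings $\tilde c_i\in\mu^{\Z}c_i$, and then checking by direct computation that $[O_{\mu,\mathbf{c}}]$ is stable under each of $\tau_1,\tau_2,\tilde\tau_{2+k},\sigma_i$. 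This verification is the most computational part of the paper and is where the factor $\phi(N)(2N-\phi(N))$ and the exponent $n'-1$ in the cardinality estimate come from. Your sketch (``explicit sub-orbits'', ``$\phi(N)$ primitive roots'') does not produce this invariant set, and without it neither the finiteness nor the bounds are established.

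A minor remark: your argument for ``finite orbit of a character $\Leftrightarrow$ finite image'' via irreducibility of the $\mathrm{Sp}(2g,\Z)$-module $\C^{2g}$ is a valid alternative to the paper's direct twist computation, though you should note that one also needs to handle the $\gamma_j$-values (the paper uses the mixing twists $\tau_{3g-1+k}$ for this).
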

 \endgroup

The heart of the proof of Theorem \ref{mainthm dynamics} is the complete classification of finite $\fMCG$-orbits in $\chign{\Aff}$ under the full mapping class group (see the beginning of Section \ref{Sec Aff case} for details on how we proceed). In Section \ref{SecCalcul}, we deduce an explicit description of the finite $\MCG$-orbits for scalar and affine representations. The decomposition of reducible representations into a tensor product of such representations then yields the result (see Sections  \ref{SecReduc} and  \ref{Sec Proof A}).

  During the evaluation process of the present work, a classification complementary to our Theorem \ref{mainthm dynamics} appeared in the preprint \cite{2017arXiv170700071B}; it concerns  finite $\MCG$-orbits of \emph{irreducible} representations in
$\chign{\mathrm{SL}_2\mathbb{C}}$
for $g>0$ and asserts that, in that case as well, finite orbits correspond to finite representations if $g>1$ and a special class of infinite representations with finite orbits appears in the genus $1$ case.

 \renewcommand\thesubsection{\thesection.\arabic{subsection}}

 \part{Algebraization}\label{partAlg}

 \section{Universal isomonodromic deformations}
 In this section, we will recall some well known results about moduli spaces and universal families of curves. For a more detailed exposition, see for example \cite[Chap. 15
 ]{MR2807457} and \cite[Chap. 6]{Hubbard}. Then we turn to the existence of analytic and algebraic universal isomonodromic deformations of connections over curves, and their respective universal properties. The main purpose of this section is the precise setup of our notation and definitions. The reader might want to skip this section at first and come back to it when needed (references will be given).
 
  \subsection{Moduli spaces of curves
  } \label{SecModuli}
  We define \emph{a  curve of genus $g$}   to  be a \emph{smooth projective complex} curve  $C$ with $H^1(C,\Z)=\Z^{2g}$.
  From now on, we will assume \begin{equation}\label{condgn} 2g-2+n>0\, .\end{equation}

As a set, the Teichm\"uller space $\Tgn$ of $n$-pointed genus $g$ curves is the set of isomorphism classes $[C,D,\varphi]$ of triples $(C,D,\varphi)$, 
where  $C$ is a genus $g$ curve, $D=\{x_1, \ldots, x_n\}$ is a set of $n$ distinct points in $C$ and $\varphi$ is a Teichm\"uller structure, \textit{i.e.}  an orientation-preserving homeomorphism $\varphi :(\Sigma_g ,Y^n)\to(C,D)$. 
Two $n$-pointed genus $g$ curves with Teichm\"uller structure $(C,D,\varphi)$ and $(C',D',\varphi')$ are said to be isomorphic 
  if there exists an isomorphism of pointed curves $\psi : (C',D')\to (C,D) $ such that $[\varphi]=[\psi \circ \varphi']$, where $[\varphi]$ denotes the isotopy class of $\varphi$.
We have a natural action of $\MCG$ on $\Tgn$ given by \[[h]\cdot [C,D,\varphi ]:= [C,D,   \varphi\circ h^{-1}]\, ; ~~~~~ [h] \in \MCG\, ,\,  [C,D,\varphi]\in \Tgn\, .\]

The kernel of this action is finite. More precisely, we have (see \cite[Prop $4.11$ p. $189$]{MR2807457}):

\begin{lem}\label{lemme Kgn}
If the natural morphism $\MCG\rightarrow \mathrm{Aut}(\Tgn)$ has nontrivial kernel $K_{g,n}$, then $K_{g,n}\simeq \Z/2\Z$ and one of the following holds.
\begin{itemize}
\item $(g,n)=(2,0)$ and the non-trivial element of $K_{g,n}$ is the hyperelliptic involution of $\Sigma_2$.\vspace{.1cm}
\item $(g,n)=(1,1)$ and the non-trivial  element of $K_{g,n}$ is the order $2$ symmetry about the puncture, given, for $(\Sigma_1,y_1)=(\C/ \Z^2,0)$, by $z\mapsto -z$.
\end{itemize}\end{lem}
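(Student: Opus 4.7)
I would first reformulate membership in $K_{g,n}$ as a pointwise condition on Teichm\"uller representatives. Unwinding the definition of the action $[h]\cdot[C,D_C,\varphi]=[C,D_C,\varphi\circ h^{-1}]$, the class $[h]\in\MCG$ lies in $K_{g,n}$ if and only if for every representative $(C,D_C,\varphi)$ of a point of $\Tgn$, there exists $\psi\in\mathrm{Aut}(C,D_C)$ with $\psi\circ\varphi$ isotopic to $\varphi\circ h^{-1}$ relative to $Y^n$; equivalently, the topological conjugate $\varphi\circ h^{-1}\circ\varphi^{-1}$ is isotopic to an element of $\mathrm{Aut}(C,D_C)$ for \emph{every} marked Riemann surface structure.

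\textbf{Key dichotomy.} I would then dichotomise according to the automorphism group of a generic pointed curve. If some point of $\Tgn$ is represented by a triple $(C,D_C,\varphi)$ with $\mathrm{Aut}(C,D_C)=\{1\}$, then the reformulation above forces $h$ to be isotopic to the identity relative to $Y^n$, so $[h]=1$ and $K_{g,n}$ is trivial. Such a curve exists whenever $2g-2+n>0$ and $(g,n)\notin\{(2,0),(1,1)\}$: this is the content of a standard dimension count (cf. \cite[Chap.~7]{MR2807457}). Indeed, for $g\geq 3$ the Hurwitz bound together with a dimension estimate on loci of automorphic curves does it; for $g=2$ and $n\geq 1$, generic marked points break the hyperelliptic involution; for $g=1$ and $n\geq 2$, generic point-configurations on an elliptic curve are not symmetric about any origin; for $g=0$ and $n\geq 3$, three points already determine a M\"obius transformation, so further marked points kill the automorphism group.

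\textbf{Exceptional cases.} It remains to handle $(g,n)=(2,0)$ and $(g,n)=(1,1)$. For $(g,n)=(2,0)$ every genus-$2$ curve $C$ admits a hyperelliptic involution $\iota_C$ whose topological model on $\Sigma_2$ is the unique non-trivial central element of $\Gamma_{2,0}$ (classical, cf. \cite[Chap.~7]{MR2807457}); by construction this mapping class lies in $K_{2,0}$. Since the generic genus-$2$ curve has automorphism group exactly $\{1,\iota_C\}\simeq\Z/2\Z$, the reformulation of Step~1 bounds $|K_{2,0}|\leq 2$, yielding $K_{2,0}\simeq\Z/2\Z$. Similarly, for $(g,n)=(1,1)$, every pair $(C,p)$ admits the involution $z\mapsto 2p-z$ fixing $p$, which under the identification $(\Sigma_1,y_1)=(\C/\Z^2,0)$ becomes $z\mapsto -z$; this is the element $-I$ under the classical isomorphism $\Gamma_{1,1}\simeq\SLtwoZ$. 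It lies in $K_{1,1}$ and saturates the bound $|K_{1,1}|\leq 2$ given by the generic automorphism group.

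\textbf{Main obstacle.} The delicate point is the \emph{universal} nature of the exceptional involutions: one must verify that the hyperelliptic involution on $\Sigma_2$ and the elliptic involution on $(\Sigma_1,y_1)$ each define a single, well-defined element of the corresponding mapping class group, independent of the Teichm\"uller representative used to pull back the curve-theoretic involution. This is classical and can be invoked from \cite{MR2807457}, relying on the centrality of the hyperelliptic involution in $\Gamma_{2,0}$ and on the explicit model $\Gamma_{1,1}\simeq\SLtwoZ$.
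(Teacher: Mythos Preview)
The paper does not actually supply a proof of this lemma: it is stated with a direct citation to \cite[Prop.~4.11, p.~189]{MR2807457} and used as a black box thereafter. Your proposal is therefore not being compared against an argument in the paper, but against the standard proof in the cited reference, and your outline faithfully reconstructs that argument. The reformulation of $K_{g,n}$ in terms of universal realisability by biholomorphic automorphisms is correct, as is the dichotomy: the existence of a single marked curve with trivial automorphism group forces $K_{g,n}=\{1\}$, and the well-known genericity facts you invoke cover every stable $(g,n)$ except $(2,0)$ and $(1,1)$. Your treatment of the two exceptional cases is also right, and you correctly isolate the only genuinely subtle point, namely that the pullback $[\varphi^{-1}\circ\iota_C\circ\varphi]$ of the curve-theoretic involution must land on a \emph{single} mapping class independent of $\varphi$; this is precisely equivalent to centrality of that class in $\MCG$, which is classical for the hyperelliptic involution in $\Gamma_{2,0}$ and for $-I$ in $\Gamma_{1,1}\simeq\SLtwoZ$.

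One small wording issue: in the $g=0$ case you write that ``further marked points kill the automorphism group'', but in fact already for $n=3$ the pointwise automorphism group of $(\mathbb{P}^1,\mathbf{x})$ is trivial, since a M\"obius transformation fixing three points is the identity; no genericity or extra points are needed. This does not affect the validity of the argument.
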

   \vspace{.3cm}
 
As a set, the moduli space $\Mgn$ of curves of genus $g$ with $n$ (labeled) punctures is the set of isomorphism classes $[C,\mathbf{x}]$ of pairs $(C,\mathbf{x})$, where $C$ is a genus $g$ curve and $\mathbf{x}=(x_1,\ldots, x_n)$ is a tuple of $n$ distinct points  in $C$. The isomorphisms are isomorphisms of pointed curves that \emph{respect the labellings} of the $n$-tuples. Notice that a Teichm\"uller structure $(C,D,\varphi)$ defines such a pair $(C,\mathbf{x})$, by setting $\mathbf{x}:=(\varphi(y_i))_{i\in \intint{1}{n}}$.
In this way, we obtain  a forgetful map  \begin{equation} \forggn  : \Tgn\rightarrow \Mgn\,\end{equation}
 whose fibers are globally fixed by the action of $\MCG/K_{g,n}$ on $\Tgn$.
Denote by $ \Rgn \subset \Tgn$ the set consisting in points with non-trivial stabilizer for the action of $\MCG/K_{g,n}$. The subset $ \Bgn:=\forggn (\Rgn)$ of $\Mgn$ characterizes pointed curves with automorphism groups not isomorphic to $K_{g,n}$. We say that these curves have \emph{exceptional automorphisms}.  

Recall that  $\Tgn$ has a natural structure of a complex analytic manifold, and $\Mgn$ has a natural structure of a complex quasi-projective variety (see \cite[chap. XIV]{MR2807457}). The set 
$\Bgn$ of curves with exceptional automorphisms is a Zariski closed subset of $\Mgn$ (see \cite[Rem. $5.13$ p. $202$ and Th. $6.5$ p. $207$]{MR2807457}) which is a proper subset (see \cite{MR0142552,MR2611511,MR1748288,MR2346015}). Moreover, the map $\forggn |_{\Tgn\setminus \Rgn} :\Tgn\setminus \Rgn \rightarrow \Mgn \setminus \Bgn$  is a non-branched analytic cover, with Galois group $\MCG/K_{g,n}$. For any point $\hat{\star}\in \Tgn$ projecting to $\star\in \Mgn$ we obtain a tautological morphism \begin{equation}\label{def phi} \tautgn  : \pi_1(\Mgn \setminus \Bgn,\star)\twoheadrightarrow \bigslant{\MCG}{K_{g,n}};\end{equation} such that for the lift $\hat{\gamma}$ in $\Tgn$ with $\hat{\gamma}(0)=\hat{\star}$ of a loop $\gamma$ corresponding to an element of $\pi_1(\Mgn \setminus \Bgn,\star)$ we have  $\hat{\gamma}(1)=\tautgn (\gamma)\cdot \hat{\star}.$  For another point $\hat{\star}'=[h]\cdot \hat{\star}$ we obtain $\taut_{[h]\cdot \hat{\star}}=h\cdot \tautgn \cdot h^{-1}$.

 \subsection{Families of pointed curves}\label{SecSetup}
Let $C$ be a curve of genus $g$ and let $D$ be a reduced divisor of degree $n$ on $C$. 
Recall that we always assume \eqref{condgn}; \textit{i.e.} the pointed curve $(C,D)$ is stable. 

A \emph{$\upomega$-family of $n$-pointed genus $g$ curves with central fiber $(C,D)$} is a datum $$\mathcal{F}_{(C,D)}=(\kappa : \familyCurve\to T,\mathcal{D}, t_0, \psi)\, ,$$ where

\begin{itemize}
\item $\kappa : \familyCurve\to T$ is a proper surjective smooth morphism of $\upomega$-manifolds;\vspace{.1cm}
\item $\mathcal{D}=\sum_{i=1}^n\mathcal{D}^i$ is a reduced divisor on $\familyCurve$ such that \vspace{.1cm}
\item there are pairwise disjoint sections $\sigma_1, \ldots, \sigma_n$ of $\kappa$ with $\sigma_i(T)=\mathcal{D}^i$;\vspace{.1cm}
\item $t_0\in T$ is a point and   \vspace{.1cm}
\item $\psi :(C,D)  \stackrel{\sim}{\to}    (\familyCurve_{t_0}, \mathcal{D}_{{t_0}}) $ is an isomorphism of $\upomega$-manifolds.   \end{itemize}\vspace{.3cm}

Here and in the following, we denote by $\familyCurve_t:=\kappa^{-1}(\{t\})$ the fiber of $\kappa$ at a parameter $t\in T$, and we denote $\mathcal{D}_{{t}}:=\mathcal{D}|_{\familyCurve_t}$ accordingly. 
When there is a smooth connected $\upomega$-neighborhood $\Delta$ of $t_0$ such that $\mathcal{F}_{(C,D)}|_{\Delta}$ satisfies a certain property, we may say that $\mathcal{F}_{(C,D)}$ satisfies this property \emph{up to shrinking}. 

A morphism $\morph : \mathcal{F}'_{(C,D)}\to \mathcal{F}_{(C,D)}$  is a pair $\morph=(\morph^a, \morph^b)$, where $\morph^a : \familyCurve'\to \familyCurve$ and $\morph^b : T'\to T$  are morphisms of $\upomega$-varieties such that the following diagram commutes (and in particular $\morph^b(t_0')=t_0$). 
\[\xymatrix
   {(C,D)\ar@{=}[r]^{\mathrm{id}}\ar@{^{(}->}[d]_{\psi'}&(C,D)\ar@{_{(}->}[d]^{\psi}\\
  (\familyCurve',\mathcal{D}')\ar[d]_{\kappa'} \ar[r]^{\morph^a} &(\familyCurve,\mathcal{D}) \ar[d]^{\kappa}    \\
  T'\ar[r]^{\morph^{b}} 
& T  
}\]
\begin{rem} Note that this definition implies that $(\familyCurve',\mathcal{D}')$ is isomorphic to the pullback $\morph^{b *}(\familyCurve,\mathcal{D})$ (the fibered product with respect to $\morph^b$ and $\kappa$). 
\end{rem}

Suppose now that we have a Teichm\"uller structure for $(C,D)$, given by an orientation preserving homeomorphism $\varphi : (\Sigma_g,Y^n)\to (C,D)$.
A $\upomega$-family of $n$-pointed genus $g$ curves  with Teichm\"uller structure with central fiber $(C,D,\varphi)$ is a datum $\mathcal{F}_{(C,D,\varphi)}^+=(\mathcal{F}_{(C,D)}, \Phi)$, where $\mathcal{F}_{(C,D)}$ is as above and $\Phi : (\Sigma_g,Y^n)\times T \stackrel{\sim}{\to} (\familyCurve,\mathcal{D})$ is a homeomorphism such that the following diagram commutes, where $\mathrm{pr}$ denotes the projection to the second factor.
\[\xymatrix
   {   (\Sigma_g,Y^n)\times \{t_0\} \ar[r]^{\hskip10pt \varphi}\ar@{^{(}->}[d] & (C,D)\ar@{_{(}->}[d]^{\psi}\\
 (\Sigma_g,Y^n)\times T \ar[dr]_{\mathrm{pr}} \ar[r]^{\hskip10pt \Phi} &(\familyCurve,\mathcal{D}) \ar[d]^{\kappa}    \\
 & T
}\]

 In particular, if we denote $$\Phi_t:=\Phi|_{(\Sigma_g,Y^n)\times \{t\}}\, , $$  then $\Phi_{t_0}=\psi\circ\varphi$.  Notice that by definition, a $\upomega$-family  with Teichm\"uller structure is topologically trivial.   

For a given $\varphi$ as above, up to shrinking, any analytic family $\mathcal{F}_{(C,D)}$  lifts to a family $\mathcal{F}_{(C,D, \varphi)}^{+}$ with Teichm\"uller structure. 

Let $\mathcal{F}_{(C,D, \varphi)}^+=(\mathcal{F}_{(C,D)}, \Phi)$ and ${\mathcal{F}'}_{(C,D, \varphi')}^{+}=(\mathcal{F}_{(C,D)}', \Phi')$ be two $\upomega$-families with Teichm\"uller structures. A morphism ${\mathcal{F}'}_{(C,D, \varphi)}^+\to \mathcal{F}_{(C,D, \varphi)}^+$  is a datum $\morph=(\morph^a, \morph^b)$ is as before, such that moreover, if we denote by $\morph^{\mathrm{top}}$  the continuous map defined by the diagram  
 \[\xymatrix
   {
 (\Sigma_g,Y^n)\times T'\ar[d]_{\Phi'}\ar[r]^{\morph^{\mathrm{top}}}&  (\Sigma_g,Y^n)\times T  \\
  (\familyCurve',\mathcal{D}')  \ar[r]^{\morph^a} &(\familyCurve,\mathcal{D}) \ar[u]^{\Phi^{-1}}\, ,      \\
}\]
then $\morph^{\mathrm{top}}$ is a fiberwise isotopy.

To a $\upomega$-family $\mathcal{F}_{(C,D)}$ [resp. $\upomega$-family with Teichm\"uller structure $\mathcal{F}_{(C,D, \varphi)}^+$] as before, one can associate a so-called  {$\upomega$-family $\mathcal{F}$} [resp. {$\upomega$-family with Teichm\"uller structure $\mathcal{F}^+$}] \emph{with non-specified central fiber}, by forgetting $(C,D)$ [resp. $(C,D,\varphi)$] and the marking $t_0, \psi$. 
 A morphism of 
$\upomega$-families with non-specified central fiber is a datum $\morph$ as above for a convenient choice of  marked central fibers.

\subsection{Universal families of pointed curves}\label{Sec univ fam}

 Let $\mathcal{F}^+_{(C,D, \varphi)}=(\mathcal{F}_{(C,D)}, \Phi)$ be a $\upomega$-family with Teichm\"uller structure. Then the classifying map
  $$ \classan(\mathcal{F}^+): \left\{ \begin{array}{ccc} T &\to &\Tgn  \\ t & \mapsto & [\familyCurve_t,\mathcal{D}_{{t}},\Phi_t] \end{array}\right.\,  $$
  is holomorphic with respect to the natural analytic manifold structure of  $\Tgn$. The Teichm\"uller space $\Tgn$ carries a universal family (see for example \cite[Chap. 6]{Hubbard}), which is an analytic family with Teichm\"uller structure
$\mathcal{F}_{g,n}^+=(\mathcal{F}_{g,n}, \Phi_{g,n})$ and non-specified central fiber, satisfying
$$\classan(\mathcal{F}_{g,n}^+)=\mathrm{id}_{\Tgn}\, .$$ The universal Teichm\"uller curve enjoys the following universal property:
If $\mathcal{F}^+=(\mathcal{F}, \Phi)$ is an analytic family with Teichm\"uller structure and non-specified central fiber, then there is a unique isomorphism
$$\morph^+ : \mathcal{F}^+ \stackrel{\sim}{\longrightarrow} \classan(\mathcal{F}^+)^*(\mathcal{F}_{g,n})$$ with $\morph^b=\mathrm{id}_T$.

  Let $\mathcal{F}_{(C,D)}$ 
  be a $\upomega$-family. Assume we have a \emph{labelling} $\mathbf{x}$ of $D$, \textit{i.e.} $\mathbf{x}=(x_i)_{i\in \intint{1}{n}}\in C^n$ and $D =\sum_{i=1}^nx_i$.  Then there is a well-defined labelling $\boldsymbol{\mathcal{D}}=(\mathcal{D}^i)_{i\in \intint{1}{n}}$ of $\mathcal{D}$ defined by $\mathcal{D}=\sum_{i=1}^n \mathcal{D}^i$ and $\psi(x_i)\in \mathcal{D}^i$ for all $i\in \intint{1}{n}$. 
  We then have a well-defined 
classifying map
   $$\class(\mathcal{F}): \left\{ \begin{array}{ccc} T &\to &\Mgn  \\ t & \mapsto & [\familyCurve_t,\boldsymbol{\mathcal{D}}_{{t}}] \end{array}\right. \, , $$
which is a morphism of $\upomega$-varieties with respect to the natural structure of $\upomega$-variety on $\Mgn$.  We say that the fiber $(\familyCurve_t, \mathcal{D}_{t})$ of $\mathcal{F}$ at $t\in T$ has \emph{exceptional automorphisms} if $\class(\mathcal{F})(t)\in \Bgn$. This notion does not depend on the choice of a labelling.
 
Although there is no universal family of curves over $\Mgn$ in the strict sense,  we can consider algebraic Kuranishi families. Let   $\mathcal{F}$ be a $\upomega$-family and let $t\in T$ be a parameter. Denote $\mathcal{F}|_{\Delta^{\mathrm{an}}}$ the analytic germification of $\mathcal{F}$ at $t$, which can be endowed with a Teichm\"uller structure $\Phi_{\Delta^{\mathrm{an}}}$. We say that $\mathcal{F}$ is \emph{Kuranishi at $t$} if $\classan(\mathcal{F}|_{\Delta^{\mathrm{an}}},\Phi_{\Delta^{\mathrm{an}}})$ is a local isomorphism. The notion of being Kuranishi at $t$ does not depend on the choice of $\Phi_{\Delta^{\mathrm{an}}}$. 
We say that $\mathcal{F}$ is \emph{Kuranishi} if it is Kuranishi at each $t\in T$. Notice that if $\mathcal{F}^{\mathrm{Kur}}$ is an algebraic Kuranishi family, then for any labelling, the classifying map $\class(\mathcal{F}^{\mathrm{Kur}})$ is dominant and has finite fibers. 

For any stable $n$-pointed genus $g$ curve $(C,D)$, there exists an algebraic Kuranishi family $\mathcal{F}^{\mathrm{Kur}}_{(C,D)}
$ with central fiber $(C,D)$
. 
Moreover, we have (see \cite[Rem. $6.9$, p. $208$]{MR2807457}):  
 \begin{prop}[Universal property of Kuranishi families] \label{kuranishi alg} \label{prop univ kuranishi alg} Let $(C,D)$ and $\mathcal{F}^{\mathrm{Kur}}_{(C,D)}$ be as above.
 Let $\mathcal{F}'_{(C,D)}$ be an algebraic family with central fiber $(C,D)$. Then there are
 \begin{itemize}
  \item an \'etale base change $p:(T'',t_0'')\rightarrow (T',t_0')$; denote $\mathcal{F}''_{(C,D)}:=p^*\mathcal{F}'_{(C,D)}$;\vspace{.1cm}
  \item a morphism $q : (T'',t_0'')\to (T,t_0)$ and \vspace{.1cm}
  \item an isomorphism  $\morph: \mathcal{F}''_{(C,D)}\stackrel{\sim}{\longrightarrow} q^*\mathcal{F}^{\mathrm{Kur}}_{(C,D)}$ with $\morph^b=id_{\Delta''}$. 
\end{itemize}
  \end{prop}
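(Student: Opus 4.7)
The plan is to first use the analytic universal property of the Teichmüller family (already recalled in Section \ref{Sec univ fam}) to produce an analytic version of the desired map, and then algebraize it via an étale base change.

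\textbf{Analytic step.} After shrinking $T$ and $T'$ around $t_0$ and $t_0'$ we may fix a Teichmüller marking $\varphi:(\Sigma_g,Y^n)\to(C,D_C)$ and extend it to Teichmüller structures $\Phi_{\Delta^{\mathrm{an}}}$ and $\Phi'_{\Delta'^{\mathrm{an}}}$ for the analytic germifications of $\mathcal{F}^{\mathrm{Kur}}_{(C,D_C)}$ and $\mathcal{F}'_{(C,D_C)}$ respectively. The hypothesis that $\mathcal{F}^{\mathrm{Kur}}$ is Kuranishi at $t_0$ means exactly that $\mathrm{class}^+(\mathcal{F}^{\mathrm{Kur}}|_{\Delta^{\mathrm{an}}},\Phi_{\Delta^{\mathrm{an}}})$ is an isomorphism of germs $(T^{\mathrm{an}},t_0)\xrightarrow{\sim}(\Tgn,[C,D_C,\varphi])$. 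Inverting it and composing with $\mathrm{class}^+(\mathcal{F}'|_{\Delta'^{\mathrm{an}}},\Phi'_{\Delta'^{\mathrm{an}}})$ yields an analytic germ morphism $q^{\mathrm{an}}:(T'^{\mathrm{an}},t_0')\to(T^{\mathrm{an}},t_0)$, and the universal property of $\mathcal{F}_{g,n}^+$ furnishes a canonical isomorphism $\mathcal{F}'|_{\Delta'^{\mathrm{an}}}\xrightarrow{\sim}q^{\mathrm{an}\,*}\mathcal{F}^{\mathrm{Kur}}|_{\Delta^{\mathrm{an}}}$ over $T'^{\mathrm{an}}$.

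\textbf{Algebraic step.} Choose a labelling of $D_C$, so that we have algebraic classifying maps $\mathrm{class}(\mathcal{F}^{\mathrm{Kur}}):T\to\Mgn$ and $\mathrm{class}(\mathcal{F}'):T'\to\Mgn$ both sending $t_0,t_0'$ to $[C,\mathbf{x}]$. Form the algebraic fibre product $Z:=T'\times_{\Mgn}T$, and pick the irreducible component of $Z$ through the point $z_0:=(t_0',t_0)$ which is cut out locally by the graph of $q^{\mathrm{an}}$. Because $\mathrm{class}(\mathcal{F}^{\mathrm{Kur}})$ factors analytically near $t_0$ as the composition of a Teichmüller isomorphism with the finite quotient $\Tgn\to\Mgn$ by the (possibly non-trivial) stabiliser of $[C,D_C,\varphi]$, this component of $Z$, after normalisation near $z_0$, provides an étale neighbourhood $p:(T'',t_0'')\to(T',t_0')$; the other projection gives the desired $q:(T'',t_0'')\to(T,t_0)$. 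By construction $q^{\mathrm{an}}$ is recovered as the analytification of $q$ near $t_0''$, so pulling back the isomorphism produced in the analytic step gives an isomorphism $\morph:p^*\mathcal{F}'_{(C,D_C)}\xrightarrow{\sim}q^*\mathcal{F}^{\mathrm{Kur}}_{(C,D_C)}$ with $\morph^b=\mathrm{id}_{T''}$, after further shrinking $T''$ to ensure that this a priori analytic isomorphism between algebraic families is algebraic by a relative GAGA for smooth proper families.

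\textbf{Main obstacle.} The delicate point is the passage from the analytic germ map $q^{\mathrm{an}}$ to a genuine algebraic morphism $q$ on an étale neighbourhood. When $(C,D_C)$ has exceptional automorphisms, $\mathrm{class}(\mathcal{F}^{\mathrm{Kur}})$ is \emph{not} étale at $t_0$, so one cannot merely invoke étaleness of base change on the fibre product; one must select the right sheet through $z_0$, which is the role played by the analytic graph of $q^{\mathrm{an}}$. Once that selection is made, the fact that $\mathrm{class}(\mathcal{F}^{\mathrm{Kur}})$ is quasi-finite at $t_0$ together with Artin-style approximation (or equivalently, the fact that $\Mgn$ arises as the coarse quotient of $\Tgn$) ensures the selected sheet is étale over $T'$. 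Upgrading the fibrewise isomorphism to a global one uses the rigidity of automorphism groups of stable pointed curves.
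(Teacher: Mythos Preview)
The paper does not prove this proposition; it simply cites it from \cite[Rem.~6.9, p.~208]{MR2807457}. So there is no ``paper's own proof'' to compare against, and your task was in effect to supply a proof where the authors chose to quote one.

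Your overall strategy---produce an analytic germ map via the Teichm\"uller classifying map, then algebraize---is sound, but the algebraic step has a genuine gap. Forming the fibre product $Z=T'\times_{\Mgn}T$ over the \emph{coarse} moduli space only guarantees that $p^*\mathcal{F}'$ and $q^*\mathcal{F}^{\mathrm{Kur}}$ have the same classifying map; it does \emph{not} produce an isomorphism between them. You try to patch this with the analytic germ isomorphism plus ``relative GAGA'', but that isomorphism lives only on an analytic neighbourhood of $t_0''$, and GAGA does not promote a germ of an analytic map to a global algebraic one. The clean fix is to pass once more to an \'etale cover: the Isom-scheme $I:=\mathrm{Isom}_{T''}(p^*\mathcal{F}',q^*\mathcal{F}^{\mathrm{Kur}})\to T''$ is finite and unramified (stable pointed curves have finite automorphism groups), the analytic section near $t_0''$ singles out a point $i_0\in I$, and the irreducible component of $I$ through $i_0$ is \'etale over a Zariski neighbourhood of $t_0''$ and carries the tautological isomorphism. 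Equivalently, one should take the fibre product over the Deligne--Mumford \emph{stack} $\mathcal{M}_{g,n}$ rather than over its coarse space; the Kuranishi family is then an \'etale atlas and the $2$-fibre product comes with the isomorphism for free. Your ``Main obstacle'' paragraph gestures at the right ingredients (quasi-finiteness, Artin approximation, rigidity), but as written the argument does not close.
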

  \subsection{Universal isomonodromic deformations}\label{SecDefDef}
   Let again $(C,D)$ be a stable $n$-pointed genus $g$ curve.   Let $(\initialBundle, \nabla_0)$ be logarithmic $\upomega$-connection over $C$ with polar divisor $D$.
   \vspace{.3cm}
    
   \textbf{Isomonodromic deformations. }
   A  $\upomega$-\emph{isomonodromic deformation} of $(C, \initialBundle, \nabla_0)$ is a tuple $\mathcal{I}_{(C, \initialBundle, \nabla_0)}=(\mathcal{F}_{(C, D)}, \familyBundle, \nabla , \Psi)$, where
  \begin{itemize} 
  \item  $\mathcal{F}_{(C,D)}=(\kappa : \familyCurve\to T,\mathcal{D}, t_0, \psi)$ is a $\upomega$-family with central fiber $(C,D)$,\vspace{.1cm}
  \item $(\familyBundle,\nabla)$ is a flat logarithmic $\upomega$-connection over $\familyCurve$ with polar divisor $\mathcal{D}$ and\vspace{.1cm}
  \item $(\psi,\Psi) : (\initialBundle\to C, \nabla_0)\to (\familyBundle\to \familyCurve, \nabla)|_{\familyCurve_{t_0}}$ is an isomorphism of $\upomega$-logarithmic connections, \textit{i.e.} 
  $\Psi : \initialBundle\to \familyBundle|_{\familyCurve_{t_0}}$ is a $\upomega$-vector bundle isomorphism over $\psi : C\to \familyCurve_{t_0}$
  satisfying $\Psi^*\left(\nabla|_{\familyCurve_{t_0}}\right)=\nabla_0$. 
  \end{itemize} 
  
   Let $\mathcal{I}_{(C, \initialBundle, \nabla_0)}$ and $\mathcal{I}_{(C, \initialBundle, \nabla_0)}'$ be two $\upomega$-isomonodromic deformations of $(C, \initialBundle, \nabla_0)$. A morphism $\morph^{}: \mathcal{I}_{(C, \initialBundle, \nabla_0)}'\to \mathcal{I}_{(C, \initialBundle, \nabla_0)}$ is a datum $\morph^{ }=(\morph^a, \morph^b, \morph^{\mathrm{vb}})$, where $(\morph^a, \morph^b)$ is a morphism $\mathcal{F}_{(C,D)}'\to \mathcal{F}_{(C,D)}$ as in Section \ref{SecSetup}, and $ \morph^{\mathrm{vb}}$ is a morphism of $\upomega$-vector bundles over $\morph^a$ with ${\nabla'=\morph^{vb}}^*\nabla$.  
 
 An algebraic isomonodromic deformation $\mathcal{I}_{(C, \initialBundle, \nabla_0)}$ of $(C, \initialBundle, \nabla_0)$ as above is called \emph{regular} if moreover $(\familyBundle, \nabla)$ is regular (with respect to a suitable meromorphic structure at infinity). The definition of regularity can be found in  \cite[Th. $4.1$]{MR0417174}. Putting this regularity condition on $\mathcal{I}_{(C, \initialBundle, \nabla_0)}$ may be seen  as a way of standardizing algebraic isomonodromic deformations, as illustrated by the following statement.  

 \begin{lem}\label{LemStandardiser} If $(\initialBundle, \nabla_0)$ is mild and $\mathcal{I}_{(C, \initialBundle, \nabla_0)}$  is an algebraic  isomonodromic deformation of $(C, \initialBundle, \nabla_0)$, then the analytification of $\mathcal{I}_{(C, \initialBundle, \nabla_0)}$ is isomorphic to the analytification of a regular algebraic isomonodromic deformation $\mathcal{I}'_{(C, \initialBundle, \nabla_0)}$ of $(C, \initialBundle, \nabla_0)$. \end{lem}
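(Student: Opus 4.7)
The plan is to produce $\mathcal{I}'$ from $\mathcal{I}$ by replacing $(E,\nabla)$ with a Deligne-style regular representative while keeping the underlying family $\mathcal{F}_{(C,D_C)}$ and central fibre intact. Concretely, on the complement $X^{0}:=X\setminus D$, the algebraic flat connection $(E,\nabla)|_{X^{0}}$ analytifies to a flat analytic connection whose kernel is a local system $\mathcal{S}$ on $X^{0}$. I would first invoke Deligne's theorem \cite[Th.~5.9]{MR0417174} to produce a canonical algebraic connection $(\tilde E,\tilde\nabla)$ on $X^{0}$ with regular singularities at infinity whose analytification realises $\mathcal{S}$; by the uniqueness part of the algebraic Riemann--Hilbert correspondence, this connection differs from $(E,\nabla)|_{X^{0}}$ only at the boundary of some compactification of $X^{0}$.

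Next, by Hironaka I would choose a smooth projective compactification $\bar X$ containing $X$ as a Zariski open subset such that $\bar D:=\bar X\setminus X^{0}$ is a simple normal crossings divisor containing $D$. For a set-theoretic section $\tau:\C/\Z\to\C$, Deligne's logarithmic extension produces a unique logarithmic extension $(\bar E,\bar\nabla)$ of $(\tilde E,\tilde\nabla)$ to $\bar X$ whose residue eigenvalues along $\bar D$ lie in the image of $\tau$. Restricting to $X\subset\bar X$ produces a regular algebraic logarithmic connection $(E',\nabla')$ on $X$ with polar divisor $D$, and by construction $(E',\nabla')^{\mathrm{an}}$ is canonically isomorphic to $(E,\nabla)^{\mathrm{an}}$ on $X^{0}$.

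The role of the mildness hypothesis is to let one pick $\tau$ so that the residues of $\nabla'$ along each component of $D$ agree with the residues of $\nabla_{0}$ at the corresponding point of $D_{C}$ transported through $\psi$: since mildness forbids non-zero integer differences among the residue eigenvalues, the local logarithmic extension prescribed by $\tau$ coincides with that of the original $(E,\nabla)$. Consequently the analytic identification on $X^{0}$ extends across $D$ to yield an isomorphism of analytic logarithmic connections $(E',\nabla')^{\mathrm{an}}\cong(E,\nabla)^{\mathrm{an}}$ on the whole of $X$. Restricting at the central fibre and composing with the original $\Psi$, GAGA on the projective curve $C$ upgrades this analytic identification to an algebraic isomorphism $\Psi':(E_{0},\nabla_{0})\stackrel{\sim}{\to}(E',\nabla')|_{X_{t_{0}}}$, producing the desired regular algebraic isomonodromic deformation $\mathcal{I}'$ together with the required isomorphism $\mathcal{I}^{\mathrm{an}}\cong(\mathcal{I}')^{\mathrm{an}}$.

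The main obstacle is the globalisation step: one has to verify that the local analytic isomorphism extending across $D$, prescribed componentwise by the choice of $\tau$ in the central fibre, is consistent along each irreducible component of $D$ and over the whole of $T$. This relies on the fact that the residues of a flat logarithmic connection are locally constant along each component of the polar divisor, so that the mildness condition in the central fibre propagates to the entire family and a single choice of $\tau$ yields a coherent extension.
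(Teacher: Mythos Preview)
Your overall architecture --- compactify, take a Deligne logarithmic extension, apply GAGA, restrict --- is exactly the engine behind the paper's Theorem~\ref{thmRH} (the logarithmic Riemann--Hilbert correspondence), which the paper then invokes as a black box to prove the lemma. So in spirit the two arguments coincide; the paper has simply packaged your steps into a prior theorem that takes the monodromy $\rho$ of $(E,\nabla)$ together with the \emph{mild transversal models} of $(E_0,\nabla_0)$ as input and outputs the regular $(E',\nabla')$ directly.

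There is, however, a genuine gap in your version. You write that ``mildness forbids non-zero integer differences among the residue eigenvalues'': this is backwards. Non-resonance implies mildness, but mildness (as defined in Section~\ref{Sec mild trans}: every automorphism of the local system on the punctured transversal disc extends to an automorphism of the logarithmic lattice) is strictly weaker and allows resonant models. The Deligne extension determined by a section $\tau:\C/\Z\to\C$ always produces a \emph{non-resonant} transversal model along each component of $D$. If $(E_0,\nabla_0)$ happens to be mild but resonant at some $x_i$, then no choice of $\tau$ will reproduce its transversal type, and your $(E',\nabla')$ will have a different transversal type along $D_i$ than $(E,\nabla)$ does. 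Two consequences follow: first, the analytic identification on $X^0$ will \emph{not} extend to an isomorphism of logarithmic connections across $D$, so $(E',\nabla')^{\mathrm{an}}\not\cong(E,\nabla)^{\mathrm{an}}$ on $X$; second, and more fatally, $(E',\nabla')|_{X_{t_0}}$ will not be isomorphic to $(E_0,\nabla_0)$, so your $\mathcal{I}'$ is not even an isomonodromic deformation of the given initial connection.

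The fix is precisely what the paper's Theorem~\ref{thmRH} provides: rather than a single section $\tau$, one must allow an arbitrary prescribed mild transversal model at each $D_i$, taken from $(E_0,\nabla_0)$ itself. The analytic construction (from \cite{cousinisom}) glues these prescribed local models to the local system on $X^0$; the algebraic and regularity statements then follow from your compactification-plus-GAGA argument applied to this more flexible analytic input.
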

   
   This lemma will be proven in Section \ref{Sec mild trans}, where we will also recall the notion of mildness, which is a minor technical condition. 
 \vspace{.3cm}
    
   \textbf{Analytic universal isomonodromic deformations. } Let $\varphi :(\Sigma_g,Y^n) \stackrel{\sim}{\to} (C,D)$ be an orientation preserving homeomorphism. 
  Consider the universal Teichm\"uller family $\mathcal{F}_{g,n}^+=(\mathcal{F}_{g,n}, \Phi_{g,n})$. We shall denote$$  \mathcal{F}_{g,n}=(\kappa_{g,n} : \mathcal{\familyCurve}\to \Tgn, \mathcal{D}) \, ;  \quad \Phi_{g,n} :(\Sigma_g,Y^n)\times \Tgn \stackrel{\sim}{\to} (\familyCurve, \mathcal{D})\, ; \quad t_0:=[C,D, \varphi]\in \Tgn 
  \, .$$ 
  By the definition of $\mathcal{F}_{g,n}^+$, we then have an isomorphism $\psi : (C,D)\stackrel{\sim}{\to} ({\familyCurve_{t_0}} , \mathcal{D}_{{t_0}})$.
  In particular,
  $\mathcal{F}^{\mathrm{Teich}}_{(C, D)}:=( \mathcal{F}_{g,n} , t_0, \psi)$ is an analytic family with central fiber $(C,D)$, which moreover is topologically trivial and has simply connected parameter space.   The inclusion $\Phi_{g,n}^{-1}\circ \psi\circ \varphi$ of the topological fiber at $t_0$ then defines an isomorphism 
   \begin{equation}\label{Teichmincl}\gf=\pi_1(\Sigma_g\setminus Y^n, y_0) \stackrel{\sim}{\to} \pi_1((\Sigma_g\setminus Y^n)\times \Tgn, (y_0,t_0))\, .\end{equation}
 Now let  $[\rho_{\nabla_0}]$  be the monodromy of $(\initialBundle, \nabla_0)$ with respect to $\varphi$.
 The representation $\rho_{\nabla_0}$ can then be trivially extended to a representation $\rho$ of $\pi_1((\Sigma_g\setminus Y^n)\times \Tgn, (y_0,t_0))$.
 It turns out that the conjugacy class of this ``extended representation'' is the monodromy representation, with respect to $\Phi_{g,n}$, of a certain flat logarithmic connection $(E, \nabla)$  over $ \mathcal{X}$ with polar divisor $\mathcal{D}$ such that the pullback $\psi^*(E,\nabla)$ restricted to $\mathcal{X}_{t_0}$ is canonically isomorphic to $(\initialBundle, \nabla_0)$. 
We obtain the \emph{universal analytic isomonodromic deformation} 
$$\mathcal{I}^{\mathrm{univ,\, an}}_{(C, \initialBundle, \nabla_0)}:=(\mathcal{F}^{\mathrm{Teich}}_{(C, D)}, \familyBundle, \nabla , \Psi^\mathrm{can})\, .$$
Its construction has been carried out in \cite{MR2667785}, using Malgrange's Lemma (see  \cite{Malgrange}) and the fact that $\Tgn$ is contractible by Fricke's Theorem. 
It satisfies the following universal property: if $\mathcal{I}'_{(C, \initialBundle, \nabla_0)}=(\mathcal{F}_{(C, D)}', \familyBundle', \nabla' , \Psi')$ is an analytic isomonodromic deformation of $(C,\initialBundle,\nabla_0)$, and if $\Delta'$ is a sufficiently small neighborhood of its central parameter $t_0'$, then there is a morphism $q : (\Delta',t_0')\to (\Tgn, t_0)$
and a canonical isomorphism $$\mathcal{I}'_{(C, \initialBundle, \nabla_0)}|_{\Delta'} \simeq q^{ *}\mathcal{I}_{(C, \initialBundle, \nabla_0)}^{\mathrm{univ,\, an}}\, .$$
The construction of this analytic universal isomonodromic deformation and the proof of its universal property rely on the fact that up to shrinking the parameter space, analytic families of curves are topologically trivial and have simply connected parameter space. This is of course no longer the case in the algebraic category, the extension of the monodromy representation of $(C, \initialBundle, \nabla_0)$ being the main challenge
.  
 \vspace{.3cm}
    
   \textbf{Algebraic universal isomonodromic deformations. }
   An  \emph{algebraic universal isomonodromic deformation} of $(C, \initialBundle, \nabla_0)$ is an algebraic isomonodromic deformation $\mathcal{I}^{\mathrm{univ, alg}}_{(C, \initialBundle, \nabla_0)}=(\mathcal{F}_{(C, D)}, \familyBundle, \nabla , \Psi)$, where
   $\mathcal{F}_{(C, D)}=\mathcal{F}^{\mathrm{Kur}}_{(C, D)}$ is an algebraic Kuranishi family with central fiber $(C,D)$. Note that an algebraic universal  isomonodromic deformation of $(C, \initialBundle, \nabla_0)$ does not need to exist; its existence is precisely the subject of Theorem \ref{algebrization thm}. When it does exist, it satisfies the following universal property, which will be proven in Section \ref{Sec proof UP}. 
        
   \begin{prop}[Universal property of universal algebraic isomonodromic deformations]\label{propuniviso}\hypertarget{propunivalg}{} Let $(C, \initialBundle, \nabla_0)$ and $\mathcal{I}^{\mathrm{univ, alg}}_{(C, \initialBundle, \nabla_0)}$ be as above. Let $\mathcal{I}'_{(C, \initialBundle, \nabla_0)}$ be another algebraic isomonodromic deformation of $(C, \initialBundle, \nabla_0)$. Assume that 
    \begin{itemize}
 \item  $(\initialBundle, \nabla_0)$ is mild;  \vspace{.1cm}
  \item the monodromy representation of $(\initialBundle, \nabla_0)$ is irreducible; \vspace{.1cm}
  \item $\mathcal{I}^{\mathrm{univ, alg}}_{(C, \initialBundle, \nabla_0)}$ and $\mathcal{I}'_{(C, \initialBundle, \nabla_0)}$ 
  are both regular. \end{itemize}
   Then there are
 \begin{itemize}
  \item an \'etale base change $p:(T'',t_0'')\rightarrow (T',t_0')$; \\
  denote $(\mathcal{F}''_{(C, D)}, \familyBundle'', \nabla'' , \Psi''):=p^*\mathcal{I}'_{(C, \initialBundle, \nabla_0)}$
  ; \vspace{.1cm}
  \item a flat algebraic connection $(L, \xi)$ of rank $1$ over $T''$ with empty polar divisor;     \vspace{.1cm}
  \item a morphism $q : (T'',t_0'')\to (T,t_0)$ and  \vspace{.1cm}
  \item an  isomorphism $\morph : (\mathcal{F}_{(C, D)}'', (\familyBundle'', \nabla'')\otimes \kappa''^*(L,\xi) , \Psi'')\stackrel{\sim}{\longrightarrow} q^*\mathcal{I}^{\mathrm{univ, alg}}_{(C, \initialBundle, \nabla_0)}$ with $\morph^b=id_{\Delta''}$. \end{itemize}
   \end{prop}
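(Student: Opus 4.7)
The plan is to reduce \emph{via} the universal property of algebraic Kuranishi families (Proposition \ref{kuranishi alg}) to a situation where both deformations live over a common base, and then to use irreducibility of $\rho_{\nabla_0}$ together with Deligne's Riemann-Hilbert correspondence to show that the two connections differ by the pullback of a rank $1$ algebraic flat connection from the base. First I would apply Proposition \ref{kuranishi alg} to $\mathcal{F}'_{(C,D_C)}$ and the Kuranishi family $\mathcal{F}^{\mathrm{Kur}}_{(C,D_C)}$ underlying $\mathcal{I}^{\mathrm{univ, alg}}_{(C, E_0, \nabla_0)}$, obtaining an \'etale base change $p : (T'',t_0'')\to (T',t_0')$, a morphism $q : (T'',t_0'')\to (T,t_0)$, and an isomorphism $p^*\mathcal{F}'_{(C,D_C)} \simeq q^*\mathcal{F}^{\mathrm{Kur}}_{(C,D_C)}$ over $T''$. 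Writing $(X'', D'')$ for the total space of this common family with projection $\kappa''$, we now have two regular flat logarithmic algebraic connections $(E'', \nabla'') := p^*(E', \nabla')$ and $q^*(E, \nabla)$ over $(X'', D'')$, each restricting on $X''_{t_0''}$ to $(E_0, \nabla_0)$ \emph{via} $p^*\Psi'$ and $q^*\Psi$.

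Next I would compare the monodromy representations $\rho_1$ and $\rho_2$ of $q^*(E,\nabla)$ and $(E'',\nabla'')$ on $\pi_1(X''\setminus D'', (y_0,t_0''))$. After absorbing the gauge freedom into the markings, both restrict to the same irreducible representation $\rho_0$ on $\pi_1(X''_{t_0''}\setminus D''_{t_0''}, (y_0,t_0''))$. Ehresmann's theorem applied to the smooth proper family $\kappa''$ (with the divisor $D''$ removed) yields the exact sequence
\[
\pi_1(X''_{t_0''}\setminus D''_{t_0''})\longrightarrow \pi_1(X''\setminus D'')\longrightarrow \pi_1(T'')\longrightarrow 1.
\]
For any $g\in \pi_1(X''\setminus D'')$ and any $h$ in the image of the fiber subgroup, the relations $\rho_i(g)\rho_0(h)\rho_i(g)^{-1}=\rho_0(ghg^{-1})$ for $i=1,2$ imply that $\rho_2(g)^{-1}\rho_1(g)$ commutes with the irreducible image of $\rho_0$. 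By Schur's lemma, $\rho_2(g)^{-1}\rho_1(g)=\chi(g)\, I$ for some $\chi(g)\in \C^*$, and $\chi$ is a character of $\pi_1(X''\setminus D'')$ trivial on the fiber subgroup, hence factoring through a character $\bar\chi:\pi_1(T'')\to \C^*$.

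Then Deligne's Riemann-Hilbert correspondence for regular flat algebraic connections on the smooth quasi-projective variety $T''$ produces a regular algebraic flat line bundle $(L,\xi)$ on $T''$ with empty polar divisor realizing $\bar\chi$. By construction, $(E'',\nabla'')\otimes \kappa''^*(L,\xi)$ and $q^*(E,\nabla)$ have identical monodromy representations on $\pi_1(X''\setminus D'')$; since both are regular algebraic flat logarithmic connections on $(X'',D'')$, Deligne's Riemann-Hilbert correspondence in the logarithmic setting provides an algebraic isomorphism $\morph^{\mathrm{vb}}:(E'',\nabla'')\otimes\kappa''^*(L,\xi)\stackrel{\sim}{\to} q^*(E,\nabla)$.

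The main obstacle lies in the final step: normalizing $\morph^{\mathrm{vb}}$ on the central fiber so that it is compatible with the markings $\Psi''$ and $\Psi$, turning the whole datum into an isomorphism of algebraic isomonodromic deformations. By irreducibility of $\rho_0$ and Schur's lemma, the automorphism group of $q^*(E,\nabla)$ over $\mathrm{id}_{X''}$ consists only of global scalars; this freedom, combined with the scalar ambiguity in a trivialization of $L_{t_0''}$, can be jointly tuned so that the restriction of $\morph^{\mathrm{vb}}$ to $X''_{t_0''}$ induces the identity on $(E_0,\nabla_0)$ \emph{via} $p^*\Psi'$ and $q^*\Psi$. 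This yields the required morphism $\morph$ with $\morph^b=\mathrm{id}_{T''}$.
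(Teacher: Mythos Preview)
Your argument is essentially correct and follows the same pattern as the paper's proof, with one structural difference and one point that needs sharpening.

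\textbf{Structural difference.} The paper first performs an extra \'etale base change on the Kuranishi side (Lemma~\ref{lemsection}) to obtain a base point section $\sigma$, then applies the Splitting Lemma~\ref{LemSplit} to write $\pi_1(X''\setminus D'')$ as an honest semidirect product $\psi''_*\pi_1(C\setminus D_C)\rtimes_\eta\sigma''_*\pi_1(T'')$, after which Lemma~\ref{LemExt} gives the character $\lambda$ directly on $\pi_1(T'')$. You bypass the section and work instead with the fibration exact sequence, extracting the character by Schur's lemma and then observing it descends to $\pi_1(T'')$. The two routes are equivalent here; the paper's version keeps the bookkeeping of base points cleaner, while yours avoids introducing the auxiliary section and the extra base change it entails.

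\textbf{Point to sharpen.} In the last step you invoke ``Deligne's Riemann-Hilbert correspondence in the logarithmic setting'' to conclude that $(E'',\nabla'')\otimes\kappa''^*(L,\xi)$ and $q^*(E,\nabla)$ are isomorphic as logarithmic connections because their monodromies agree. That is not quite enough: two regular logarithmic connections with the same monodromy need not be isomorphic --- they may differ in their transversal types (residue data) along the $D_i''$. The correct statement is Theorem~\ref{thmRH}, which requires both the monodromy and the (mild) transversal types to match. Here they do: each connection restricts to $(E_0,\nabla_0)$ on the central fibre, the transversal type along $D_i''$ is constant and equals that of $\nabla_0$ at $x_i$ (cf.\ Section~\ref{Sec mild trans}), and tensoring by $\kappa''^*(L,\xi)$ does not alter it since $(L,\xi)$ has empty polar divisor. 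The mildness hypothesis on $(E_0,\nabla_0)$ is precisely what makes Theorem~\ref{thmRH} applicable. The paper's proof states this explicitly (``same transversal models, given by $(E_0,\nabla_0)$''); you should too.
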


  \begin{rem} 
  It is not possible without further assumptions to prove a similar statement for initial connections $(\initialBundle, \nabla_0)$ with merely semisimple monodromy representations. \end{rem}

 \section{Fundamental groups   and the Riemann-Hilbert correspondence}  
In this section, we shall see that up to an \'etale base change, any algebraic family of pointed curves can be endowed with a section avoiding the punctures.  
 The existence of such a base point section allows us to decompose the fundamental group of the total space of the family of curves into an semi-direct product of the fundamental groups of the central fiber and the parameter space. Together with the logarithmic Riemann-Hilbert correspondence, this will be used to prove the universal property of universal algebraic isomonodromic deformations. 
 
 \subsection{Splitting of the fundamental group}\label{SecSplitting}
\begin{lem}[Existence of a base point section]\label{lemsection}
 Let $\mathcal{F}_{(C,D)}=(\kappa : \familyCurve\to T,\mathcal{D}, t_0, \psi)$ be an algebraic family of pointed curves with central fiber $(C,D)$ as in Section \ref{SecSetup}. Let $x_0$ be a point in $C\setminus D$. Then there are
 \begin{itemize}
 \item a Zariski open neighborhood $\Delta$ of $t_0$ in $T$ and  \vspace{.1cm}
  \item a finite \'etale cover $p:(\Delta',t_0')\rightarrow (\Delta,t_0)$
\end{itemize} such that for $\mathcal{F}'_{(C,D)}=(\kappa' : \familyCurve'\to \Delta',\mathcal{D}', t_0',\psi')$, defined by $\mathcal{F}'_{(C,D)}:=p^*\mathcal{F}_{(C,D)}$, there exists a section $\sigma$ of $\kappa'$ with values in $\familyCurve'\setminus \mathcal{D}'$ such that $\sigma(t_0')=\psi'(x_0)$.

\end{lem}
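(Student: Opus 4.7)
The morphism $\kappa|_{X^{\circ}} : X^{\circ} := X \setminus D \to T$ is smooth of relative dimension $1$, so the local structure theorem for smooth morphisms furnishes an affine Zariski open neighborhood $U \subset X^{\circ}$ of $\psi(x_0)$ together with an \'etale $T$-morphism $\varphi : U \to \mathbb{A}^1_T = T \times \mathbb{A}^1$. Write $\varphi(\psi(x_0)) = (t_0, u_0)$ and introduce the constant $T$-section $s_{u_0} : T \to \mathbb{A}^1_T$ defined by $t \mapsto (t, u_0)$.

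Form the fiber product $T' := T \times_{\mathbb{A}^1_T, s_{u_0}, \varphi} U$. Since $\varphi$ is \'etale, so is the first projection $p : T' \to T$. Because both $\varphi$ and $s_{u_0}$ are $T$-morphisms, the second projection $T' \to U \subset X$ is itself a $T$-morphism, and hence defines a section of the base-changed family $\kappa' : X \times_T T' \to T'$ whose image lies entirely outside the pulled-back divisor. The tautological point $t_0' := (t_0, \psi(x_0)) \in T'$ satisfies $p(t_0') = t_0$, and the section sends $t_0'$ to $\psi'(x_0)$ by construction.

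It remains to promote $p : T' \to T$ to a \emph{finite} \'etale cover in a Zariski neighborhood of $t_0$. Since $p$ is \'etale, it is quasi-finite and separated, so Zariski's main theorem factors it as an open immersion $T' \hookrightarrow \overline{T'}$ followed by a finite morphism $\overline{T'} \to T$. After replacing $\overline{T'}$ by its irreducible component through $t_0'$ and removing from $T$ the closed images of both $\overline{T'} \setminus T'$ and the ramification locus of $\overline{T'} \to T$, one obtains the required Zariski open $\Delta \ni t_0$ together with a connected component $\Delta' \subset p^{-1}(\Delta)$ finite \'etale over $\Delta$. The main obstacle lies precisely in this last step: one must check that the closed subsets of $T$ to be removed do not contain $t_0$, which follows because $p$ is \'etale at $t_0'$ itself, while the construction of the section is essentially automatic once smoothness has provided the \'etale coordinate $\varphi$.
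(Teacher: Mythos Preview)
Your construction of the \'etale cover $p:T'\to T$ together with its tautological section is correct and elegant, but the final paragraph contains a genuine gap. The assertion that the closed sets $\bar p(\overline{T'}\setminus T')$ and the branch locus of $\bar p:\overline{T'}\to T$ avoid $t_0$ does \emph{not} follow from the fact that $p$ is \'etale at $t_0'$. That hypothesis controls only the single point $t_0'\in\bar p^{-1}(t_0)$; it says nothing about other points of the fibre $\bar p^{-1}(t_0)$, which may well lie in $\overline{T'}\setminus T'$ or be ramification points. For a concrete illustration at the level of curves, take $T=\mathbb{A}^1$, $\overline{T'}=\mathbb{A}^1$, $\bar p(z)=z^2(z-2)$, $t_0=0$ and $t_0'=2$. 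The map $\bar p$ is ramified exactly at $z=0$ and $z=4/3$, so $p:=\bar p|_{\mathbb{A}^1\setminus\{0,4/3\}}$ is \'etale and $p(t_0')=0=t_0$; yet $0\in\overline{T'}\setminus T'$ and $\bar p(0)=0=t_0$, so removing $\bar p(\overline{T'}\setminus T')$ kills $t_0$. In the Zariski topology of an irreducible curve every nonempty open of $T'$ is connected, so passing to a connected component of $p^{-1}(\Delta)$ does not help either. Nothing in your construction rules this phenomenon out: the closure of the locally closed subset $T'\subset X$ can meet the central fibre $X_{t_0}$ at points outside $U$.

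The paper sidesteps this difficulty by producing from the outset a \emph{closed} multisection. Using Bertini, one chooses a hyperplane $H\subset\mathbb{P}^N$ through $\psi(x_0)$ that meets $X_{t_0}$ transversely and avoids $D|_{X_{t_0}}$; an irreducible component $T'$ of the closed subscheme $X\cap H$ through $\psi(x_0)$ is then automatically finite over $T$ (being proper with finite fibres). The crucial point is that transversality of $H$ with $X_{t_0}$ is a condition at \emph{every} point of $H\cap X_{t_0}$, so the map $T'\to T$ is \'etale at every preimage of $t_0$, and $t_0$ genuinely lies outside the branch locus. After also removing the closed image of $T'\cap D$ (which misses $t_0$ since $H\cap D|_{X_{t_0}}=\emptyset$), one obtains the desired Zariski open $\Delta$ and the finite \'etale cover $\Delta':=T'|_\Delta\to\Delta$, with the diagonal section. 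Your local--coordinate idea can in principle be salvaged, but only by finding a way to control \emph{all} points of the central fibre of the compactified cover, which is precisely what the hyperplane construction achieves.
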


\begin{proof} Since $\familyCurve$ is embedded in some projective space $\mathbb{P}^N$, by Bertini's Theorem, there exists a hyperplane $H$ of $\mathbb{P}^N$ which intersects $\familyCurve_{t_0}$ transversely, is disjoint from $\mathcal{D}_{{t_0}}$ and satisfies $\psi(x_0)\in H$.  Since $H$ is ample, we have $\mathrm{deg}(\familyCurve_t\cap H)>0$ for each $t\in T$. In particular, $H\cap \familyCurve_t \neq \emptyset$ for each parameter $t\in T$. 
By irreducibility of $T$, there exists an irreducible component $T'$ of $\familyCurve\cap H$ such that $\kappa(T')=T$ and $\psi(x_0)\in T'$. Now $\kappa|_{T'}: T'\to T$ is a connected finite ramified covering. Denote by $Z_1\subset T$ its branching locus. Further, denote by $Z_2$ the adherence of $\kappa(T'\cap \mathcal{D})$. By construction, 
$Z:=Z_1\cup Z_2$ is a Zariski closed proper subset of $T$ not containing $t_0$. Denote   
$ \Delta :=T\setminus Z$ and 
 $$\Delta':=\kappa^{-1}(\Delta)\cap T'\, .$$ We now have $t_0':=\psi(x_0)\in  \Delta'$ and 
$$p:=\kappa|_{\Delta'} : (\Delta',t_0')\rightarrow (\Delta,t_0)$$ is a connected finite \'etale cover. Consider the algebraic family $\mathcal{F}'_{(C,D)} :=p^*\mathcal{F}_{(C,D)}$. By definition of the pullback, its total space $\familyCurve'$ is given by a fibered product
$$\familyCurve'=\{(x,t')\in \familyCurve|_{\kappa^{-1}(\Delta)}\times \Delta' ~|~ \kappa(x)=p(t')\}$$
and we have $\kappa':\familyCurve'\to \Delta'\, ;~(x,t')\mapsto t'\, .$ On the other hand, $\Delta'$ is a subset of $\familyCurve|_{\kappa^{-1}(\Delta)}$ by construction and we can define a section $\sigma$ of $\kappa'$ by $$\sigma :\Delta'\to \familyCurve'\, ;~t'\mapsto (t',t')\, .$$  
Since moreover $\Delta'\cap \mathcal{D}=\emptyset$ by the choice of $Z_2$, we have $\sigma(\Delta')\cap \mathcal{D}'=\emptyset$. We conclude by noticing $\sigma(t_0')=(\psi(x_0),t_0')=\psi'(x_0)$.
 \end{proof}

To fix the notation, let us recall the definition of (inner) semi-direct products.

Let $G$ be a group and $A$ a subgroup. Assume we have a group $\widetilde{B}$ fitting into a split short exact sequence of groups, as follows.     \[ \xymatrix{
    \{1\} \ar[r]& A \ar[r] & G\ar[r] & \widetilde{B}  \ar@/_1pc/[l]_{\sigma}   \ar[r]&\{1\}\, 
    } \]
Assume further that the map $A\to G$ in that sequence is defined by the inclusion map. 
    Then $A$ is a normal subgroup of $G$; for $B:=\sigma(\widetilde{B})$ we have a natural morphism  
 $\eta \in \Hom(B, \mathrm{Aut}(A))$ defined by $\eta(b)(a)=b\cdot a\cdot b^{-1}$ for all $a\in A\, , b\in B;$ we have a group
  $A\rtimes_{\eta}B$ defined as the set $A\times B$ endowed with the   group law 
$$(a\, ,b)\cdot (a'\, , b') = (a\cdot \eta(b)(a') \, , b\cdot b')\, ,  $$
and the natural morphism $A\rtimes_{\eta}B\to G$ defined by $(a,b)\mapsto a\cdot b$ is bijective,  allowing us to identify
$G=A\rtimes_{\eta}B.$


 \begin{lem}[Splitting]\label{LemSplit} Let $\mathcal{F}_{(C,D)}=(\kappa : \familyCurve\to T,\mathcal{D}, t_0, \psi)$  be an algebraic family as in Section \ref{SecSetup}.  Let $\sigma : T\to \familyCurve$ be a section of $\kappa$ such that $\sigma(T)\subset \familyCurve^0=\familyCurve\setminus \mathcal{D}$. Denote $C^0:=C\setminus D$ and $x_0:=\psi^{-1}(\sigma(t_0))$. Then  \begin{equation}\label{eqSplit} \pi_1(\familyCurve^0,\sigma(t_0)) = \psi_*\pi_1(C^0,x_0)\rtimes_{\eta} \sigma_*\pi_1(T,t_0)\, ,\end{equation}
 where for all $\gamma \in \pi_1(C^0,x_0)$ and $ \beta \in \pi_1(T,t_0)$ we have $$\begin{array}{rcl}\eta(\sigma_*\beta ) (\psi_*\gamma) &=& \sigma_*\beta \cdot \psi_*\gamma  \cdot \sigma_*\beta^{-1}
 \, . \end{array}$$  \end{lem}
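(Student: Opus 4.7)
The plan is to realize the decomposition as an instance of the long exact sequence of a Serre fibration, with the section providing both the splitting of that sequence and the vanishing of the relevant connecting map; the algebraic-geometric data enter only in the very first step.

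The first step is to check that $\kappa^0 := \kappa|_{X^0} \colon X^0 \to T$ is a locally trivial $C^\infty$ fiber bundle with fiber diffeomorphic to $C^0$. Since $\kappa : X \to T$ is proper and smooth, Ehresmann's theorem makes it into a locally trivial $C^\infty$ bundle of compact Riemann surfaces over $T$. Because $D = D_1 \sqcup \cdots \sqcup D_n$, with each $D_i = \sigma_i(T)$ the graph of a section, these components remain disjoint in every trivializing chart; hence $(X, D) \to T$ is locally trivial as a pair, and removing $D$ produces a locally trivial bundle $\kappa^0 \colon X^0 \to T$ whose fiber over $t_0$, identified via $\psi$ with $C^0$, is based at $\sigma(t_0) = \psi(x_0)$. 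In particular $\kappa^0$ is a Serre fibration. The long exact homotopy sequence based at $\sigma(t_0)$ then reads
\[
\pi_2(T, t_0) \xrightarrow{\partial} \pi_1(C^0, x_0) \xrightarrow{\psi_*} \pi_1(X^0, \sigma(t_0)) \xrightarrow{\kappa^0_*} \pi_1(T, t_0) \to \pi_0(C^0).
\]
The last group is trivial, and the section $\sigma \colon T \to X^0$ gives $\kappa^0_* \circ \sigma_* = \mathrm{id}$. The same section forces $\partial = 0$: given $f \colon S^2 \to T$, the composition $\sigma \circ f$ is a \emph{global} lift, hence the standard obstruction describing $\partial[f]$ by extending a partial lift from $S^1 = \partial D^2$ vanishes. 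Consequently $\psi_*$ is injective and we obtain a split short exact sequence
\[
1 \to \pi_1(C^0, x_0) \xrightarrow{\psi_*} \pi_1(X^0, \sigma(t_0)) \xrightarrow{\kappa^0_*} \pi_1(T, t_0) \to 1,
\]
split by $\sigma_*$.

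The last step is purely algebraic. Applying to this split extension the identification of split exact sequences with inner semidirect products recalled immediately before the lemma, with $A = \psi_*\pi_1(C^0, x_0)$, $\widetilde B = \pi_1(T, t_0)$ and $B = \sigma_*\pi_1(T, t_0)$, one reads off
\[
\pi_1(X^0, \sigma(t_0)) = \psi_*\pi_1(C^0, x_0) \rtimes_\eta \sigma_*\pi_1(T, t_0),
\]
where $\eta$ acts by conjugation inside $\pi_1(X^0, \sigma(t_0))$, which is exactly the formula in the statement. The one genuinely delicate point is the bundle assertion of Step 1: once one grants that $\kappa^0$ is a locally trivial bundle, everything afterwards is a standard homotopy-theoretic manipulation, and in particular the fact that the sections $\sigma_i$ cutting out $D$ are \emph{disjoint} is what allows Ehresmann's trivialization to descend to the complement.
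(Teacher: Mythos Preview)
Your proof is correct and follows essentially the same approach as the paper: both establish that $\kappa|_{X^0}$ is a locally trivial fibration, invoke the long exact homotopy sequence, and use the section $\sigma$ to split it and kill the connecting map $\pi_2(T)\to\pi_1(C^0)$. The only cosmetic difference is that the paper phrases the vanishing of $\partial$ via surjectivity of $\kappa_*$ on $\pi_2$ (since $\sigma_*$ splits it at every level) rather than via your explicit lifting description, and cites a reference for the local triviality rather than invoking Ehresmann directly.
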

 \begin{proof}  Since $\sigma$ takes values in $\familyCurve^0$, we have a  morphism of fundamental groups $\sigma_* : \pi_1(T,t_0)\to \pi_1(\familyCurve^0,\sigma(t_0))$. From the embedding of the central fiber, we get the morphism $\psi_*:  \pi_1(C^0,x_0)  \to \pi_1(\familyCurve^0,\sigma(t_0))\,.$ 
    Consider now the family of $n$-punctured curves  given by $\kappa : \familyCurve^0\rightarrow T$. This family is a topologically locally trivial fibration and the fiber over $t_0$ identifies, \textit{via} $\psi$, with $C^0$. Hence we have a long homotopy exact sequence 
  \[\cdots \longrightarrow\pi_2(\familyCurve^0 ,\sigma(t_0))\stackrel{\kappa^*}{\longrightarrow} \pi_2(T ,t_0) \longrightarrow \pi_1(C^0 ,\sigma(t_0)) \stackrel{\psi^*}{\longrightarrow} \pi_1(\familyCurve^0 ,\sigma(t_0))\stackrel{\kappa^*}{\longrightarrow} \pi_1(T ,t_0)\longrightarrow \{1\}.   \]
 
 The maps ${\sigma}_* : \pi_j(T,t_0)\rightarrow \pi_j(\familyCurve^0,\tau(t_0))$ are sections for the corresponding $\kappa_*$ and we may derive the following split short exact sequence:
 
  $ \xymatrix{\textrm{~}\hspace{.7cm} & 
    \{1\} \ar[r]&\pi_1(C^0,x_0) \ar[r]^{\hskip-5pt \psi_*}& \pi_1(\familyCurve^0,\tau(t_0)) \ar[r]_-{\kappa_*}& \pi_1(T,t_0) \ar@/^-1.2pc/[]!<-1ex,-2ex>;[l]!<+4ex,-1ex>_-{{\sigma}_*}  \ar[r]&\{1\}\, .
    } $ 
 \end{proof}

 Given a decomposition \eqref{eqSplit}, the monodromy representation of the flat connection underlying an isomonodromic deformation  can be seen as an extension of the monodromy representation of the initial connection. When does such an extension exist, and is it somehow unique? Again we need a little group theory.

  \begin{lem}[Extension of representations] \label{LemExt} 
  Let $G=A\rtimes_{\eta}B$ be as before and let $\rho_A \in \Hom(A, \GL)$ be a representation. 
  \begin{itemize}
   \item There exists a representation $\rho  \in \Hom(G, \GL)$ such that $\rho|_A=\rho_A$ if and only if there exists a representation $\rho_B \in \Hom(B, \GL)$ such that for all $(a,b)\in A\times B$ 
  we have $$\rho_A(b\cdot a \cdot b^{-1})=\rho_B(b)\cdot \rho_A(a)\cdot \rho_B(b^{-1})\, . $$ 
  \item 
   Let $\rho, \rho' \in  \Hom(G, \GL)$ be representations such that  $\rho|_A=\rho'|_A=\rho_A$. Assume that $\rho_A$ is irreducible. Then there is $\lambda \in  \Hom(B,\C^*)$ such that 
  $$\rho=\lambda\otimes \rho'\, .$$
  \end{itemize}
 \end{lem}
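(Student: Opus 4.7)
\medskip

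\noindent\textbf{Proof plan for Lemma 2.4.}

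\emph{First bullet.} For the forward implication, if $\rho \in \Hom(G, \GL)$ extends $\rho_A$, I will simply set $\rho_B := \rho|_B$; the required compatibility $\rho_A(bab^{-1}) = \rho_B(b)\rho_A(a)\rho_B(b)^{-1}$ is then just the fact that $\rho$ is a homomorphism together with $\rho|_A = \rho_A$. For the converse, given $\rho_A$ and $\rho_B$ satisfying the stated condition, the natural candidate is
\[
\rho(a,b) := \rho_A(a) \cdot \rho_B(b), \qquad (a,b) \in A \rtimes_\eta B.
\]
I will check that this is a group homomorphism by a direct computation: unfolding $(a,b)(a',b') = (a\cdot ba'b^{-1},\, bb')$ and using the compatibility hypothesis to move $\rho_B(b)$ past $\rho_A(a')$. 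The key identity
\[
\rho_A(a\cdot ba'b^{-1})\rho_B(bb') = \rho_A(a)\rho_B(b)\rho_A(a')\rho_B(b')
\]
is exactly the condition we assumed, and it forces $\rho((a,b)(a',b')) = \rho(a,b)\rho(a',b')$. This direction is purely formal.

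\emph{Second bullet.} Let $\rho, \rho'$ be two extensions of $\rho_A$. For any $b \in B$ and any $a \in A$, both $\rho(b)$ and $\rho'(b)$ conjugate $\rho_A(a)$ into $\rho_A(bab^{-1})$, so
\[
\rho(b)\rho_A(a)\rho(b)^{-1} = \rho'(b)\rho_A(a)\rho'(b)^{-1}.
\]
Rearranging, the matrix $M(b) := \rho'(b)^{-1}\rho(b)$ commutes with every element of $\rho_A(A)$. Since $\rho_A$ is irreducible, Schur's lemma yields a scalar $\lambda(b) \in \C^*$ with $M(b) = \lambda(b)\,I_r$, i.e.\ $\rho(b) = \lambda(b)\rho'(b)$. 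The map $\lambda : B \to \C^*$ is a homomorphism: comparing $\rho(bb')$ with $\rho(b)\rho(b')$ gives $\lambda(bb')\rho'(bb') = \lambda(b)\lambda(b')\rho'(bb')$, and $\rho'(bb')$ is invertible. Pulling $\lambda$ back to $G$ via the natural projection $G = A \rtimes_\eta B \twoheadrightarrow B$ (well defined because $A$ is normal in $G$), I obtain a character $\tilde\lambda \in \Hom(G, \C^*)$ vanishing on $A$, and the equality $\rho(ab) = \rho_A(a)\lambda(b)\rho'(b) = \tilde\lambda(ab)\rho'(ab)$ gives $\rho = \lambda \otimes \rho'$ as claimed.

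\emph{Expected difficulty.} There is no real obstacle here: the first part is a routine unfolding of the semidirect product law, and the second is a direct application of Schur's lemma. The only point where one has to be slightly careful is in reading the statement $\rho = \lambda \otimes \rho'$ with $\lambda \in \Hom(B, \C^*)$: this must be interpreted as tensoring with the pullback $\tilde\lambda = \lambda \circ \pi$ to $G$, which is legitimate precisely because $A$ is normal so the projection $\pi : G \to B$ is a group homomorphism.
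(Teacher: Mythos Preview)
Your proof is correct and is precisely the elementary argument one expects; the paper itself does not give a proof but states that it ``is elementary and will be left to the reader,'' so your approach is the intended one.
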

 
 The proof of this lemma is elementary and will be left to the reader. A similar statement can be found in \cite[Lem. 1]{cousinisom}.


 \subsection{Logarithmic Riemann-Hilbert correspondence}\label{Sec mild trans}
Let us briefly recall some notions and results from \cite{cousinisom}, allowing to construct isomonodromic deformations from extensions of monodromy representations. 

Denote by $\mathbb{D}$ the unit disc around $0$ in the complex line and denote by $\trivvb$ the trivial vector bundle of rank $r$ over $\mathbb{D}$. 
A (logarithmic) \emph{transversal model} is an analytic logarithmic connection $(\trivvb, \xi)$ over $\mathbb{D}$ with polar locus $\{0\}$. It is called a \emph{mild transversal model}  if any automorphism of the locally constant sheaf $\mathrm{ker}(\xi|_{\mathbb{D}\setminus \{0\}})$ is obtained by the restriction to $\mathbb{D}\setminus \{0\}$ of an automorphism of the sheaf $\oplus_{i=1}^r\mathcal{O}_{\mathbb{D}}$ of holomorphic sections of $\trivvb$. 
Let us recall some examples.
\begin{itemize}\itemsep0pt
\item If  $(\trivvb, \xi)$ is a model such that its monodromy admits only one Jordan block for each  eigenvalue, then $(\trivvb, \xi)$ is mild. 
\item If  $(\trivvb, \xi)$ is \emph{resonant} (its residue admits two eigenvalues that differ by a non-zero integer) and has diagonalizable monodromy, then $(\trivvb, \xi)$ is not mild. 
\item If  $(\trivvb, \xi)$ is non-resonant, then $(\trivvb, \xi)$ is mild.  
\item In particular, if $(\trivvb, \xi)$ is a \emph{Deligne model}, i.e., the real parts of the eigenvalues of its residue take values in $[0,1)$, then $(\trivvb, \xi)$ is mild.
\end{itemize}
From the third example, one can easily deduce that generic representations of punctured curve fundamental groups cannot be realized by non-mild logarithmic connections.

The isomorphism class of a transversal model is called a \emph{transversal type}.  Accordingly, a \emph{mild transversal type} is the transversal type of a mild transversal model.

 Let $X$ be a  $\upomega$-manifold, and let $D$ be a reduced divisor in $X$. Denote $(D^i)_{i\in I}$ the irreducible components of $D$.
Let $$\rho \in \Hom(\pi_1(X\setminus D), \GL)$$ be a representation and $\mathcal{L}$ be a locally constant sheaf over $X\setminus D$ with monodromy $\rho$.  For each $i\in I$, choose a holomorphic embedding  $f_i : \mathbb{D}\hookrightarrow X\setminus \cup_{j\neq i} D^j$ such that $f_i (\mathbb{D})$ intersects $D^i$ transversely exactly once, at $f_i(0)$, a smooth point of $D$. We say that a transversal model $(\trivvb, \xi_i)$ is \emph{compatible} with $\rho$ at $D^i$ if its monodromy is  isomorphic to the one of $f_i^*\mathcal{L}$. This is a well-defined notion, independant of the choice of $f_i$. By isomorphism invariance, this adapts to a notion of \emph{compatible transversal type}. Compatible mild transversal models always exist, $e.g.$ one can choose Deligne models.

Assume we have a flat $\upomega$-logarithmic connection $\nabla$ over $X$, with polar locus in $D$.  By \cite[Prop. $3.2.1$]{cousinisom}, the transversal type defined by $f_i^*\nabla$ is independant of the choice of $f_i$, it depends only on $D^i$ and $\nabla$.
It is called the \emph{transversal type of $\nabla$ at $D^i$}.  The connection $\nabla$  is said to be \emph{mild} if for every component $D^i$, the transversal type of $\nabla$ at $D^i$ is mild. 
\begin{thm}[Logarithmic Riemann-Hilbert]\label{thmRH} 
Let $X$ be a $\omega$-manifold, let $D$ be a smooth reduced divisor in $X$ and let $\rho : \pi_1(X\setminus D)\to \mathrm{GL}_r\C$ be a representation. Let $(D^i)_{i\in I}$ be the irreducible components of $D$. For each $i\in I$, let  $(\trivvb, \xi_i)$ be a mild transversal model compatible with $\rho$. Then up to isomorphism there is a unique flat $\upomega$-logarithmic connection $(E, \nabla)$ over $X$ with polar locus $D$  such that 
\begin{itemize}
\item the monodromy of $(E, \nabla)$ is given by $[\rho]$ and\vspace{.1cm}
\item for each $i\in I$, the transversal type of $\nabla$ at $D^i$ is given by $(V,\xi_i)$;\vspace{.1cm}
\item if ``$\upomega=$ algebraic'', then $(E, \nabla)$ is regular.
\end{itemize}
\end{thm}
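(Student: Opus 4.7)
The plan is to adapt Deligne's classical construction of regular logarithmic extensions (\cite[Prop.~5.4]{MR0417174}) so as to realize the prescribed transversal types at each component of $D$; this is essentially the content of \cite[Th.~3.3]{cousinisom}, extended from the smooth-divisor setting to the normal-crossing setting. I would treat the analytic case first and then deduce the algebraic case via compactification and GAGA.

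For existence in the analytic case, the classical Riemann-Hilbert correspondence on $X\setminus D$ turns $\rho$ into a flat holomorphic connection $(E^\circ,\nabla^\circ)$ with monodromy $[\rho]$. Near a smooth point of a single component $D_i$, a polydisc neighborhood $U$ with $D_i\cap U = \{z_1=0\}$ has cyclic local fundamental group, so the given mild model $(\trivvb,\xi_i)$, pulled back by the projection $U\to\mathbb{D}$ to the first coordinate, supplies a logarithmic extension whose restriction outside $D_i$ is flat-isomorphic to $(E^\circ,\nabla^\circ)|_{U\setminus D_i}$ (compatibility of transversal types being exactly what this demands). Near a crossing $D_{i_1}\cap\cdots\cap D_{i_k}$ an external tensor product of the relevant transversal models provides a local extension that is consistent with the single-component extensions on the neighboring smooth strata. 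These local pieces glue with $(E^\circ,\nabla^\circ)$ into a global flat logarithmic connection $(E,\nabla)$ on $X$; minimality of $D$ as polar locus follows because, by the definition of a transversal model, no $\xi_i$ is regular at $0$.

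For uniqueness, given two solutions $(E,\nabla)$ and $(E',\nabla')$, classical Riemann-Hilbert on $X\setminus D$ yields a flat isomorphism $\phi:(E,\nabla)|_{X\setminus D}\stackrel{\sim}{\to}(E',\nabla')|_{X\setminus D}$, i.e.\ an intertwiner of the common monodromy representation. Mildness is used precisely here: on any transversal disc through a smooth point of $D_i$, the pull-backs of $\nabla$ and $\nabla'$ both realize the same mild transversal type $(\trivvb,\xi_i)$, so any flat intertwiner on $\mathbb{D}^*$ is, by the definition of mildness, the restriction of a holomorphic bundle isomorphism on $\mathbb{D}$. This extends $\phi$ across the smooth part of $D$, and since the crossing locus has codimension $\geq 2$ in $X$, Hartogs' theorem extends $\phi$ to an isomorphism $(E,\nabla)\stackrel{\sim}{\to}(E',\nabla')$ on all of $X$. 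The algebraic case then follows by embedding $X$ in a smooth projective $\bar{X}$ with simple normal-crossing boundary, applying Deligne's canonical regular extension across $\bar{X}\setminus X$ (choosing eigenvalues of residues in a fundamental domain of $\mathbb{C}/\mathbb{Z}$), and using Serre's GAGA on the projective $\bar{X}$ to algebraize; regularity is built into Deligne's choice, and algebraic uniqueness reduces to analytic uniqueness via the equivalence of categories for regular connections.

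The main obstacle is the gluing at crossing strata. Mildness and compatibility are formulated transversally to a single component, and one needs to check that an external tensor product of mild transversal models remains mild enough both for existence (to make the local product extension agree with each single-component extension along adjacent smooth strata) and for uniqueness (to propagate the intertwiner across crossing points after it has been extended across each smooth stratum). Verifying this compatibility, and checking that the Hartogs-type extension argument indeed suffices at crossings without requiring any additional eigenvalue hypothesis, is the technical point where the normal-crossing case goes beyond the smooth-divisor argument of \cite{cousinisom}.
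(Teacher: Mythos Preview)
Your algebraic reduction is essentially the paper's: compactify $X$ to a smooth projective $\widehat{X}$ with normal-crossing boundary, produce an analytic logarithmic connection on $\widehat{X}$ (the paper does this by applying the analytic statement itself to $\widehat{X}$, after choosing arbitrary mild models along the components at infinity---your ``Deligne canonical extension'' is the same move), invoke GAGA on the projective $\widehat{X}$, restrict to $X$, and cite \cite[Th.~4.1]{MR0417174} for regularity. For uniqueness the paper observes that an analytic isomorphism between two regular algebraic connections is a horizontal section of the regular connection $\nabla_1\otimes\nabla_2^\vee$, hence algebraic by \cite[Prop.~4.6]{MR0417174}; your appeal to the equivalence of categories for regular connections is the same argument packaged differently.

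The paper does not prove the analytic case at all---it simply cites \cite[\S3.2]{cousinisom}---so your sketch goes beyond what the paper does. Your uniqueness argument (extend the flat intertwiner across the smooth part of $D$ by mildness, then across $\mathrm{Sing}(D)$ by Hartogs) is fine. But the existence step at crossing strata contains a genuine error: an external tensor product of $k$ rank-$r$ transversal models has rank $r^k$, not $r$, and its monodromy is $M_{i_1}\otimes\cdots\otimes M_{i_k}$ rather than the commuting tuple $(M_{i_1},\ldots,M_{i_k})$ that the local system on a punctured polydisc actually prescribes. So the local object you propose cannot be glued to $(E^\circ,\nabla^\circ)$. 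One correct route is to construct the extension only over $X\setminus\mathrm{Sing}(D)$ from the single-component models and then push forward across the codimension-$\ge 2$ locus $\mathrm{Sing}(D)$, checking that the result stays locally free. You correctly flagged this step as the main obstacle; the specific fix you offered, however, does not work.
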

 
 \begin{proof}
The proof of this theorem in the analytic category can be found in \cite[Section 3.2]{cousinisom}. We only need to check that it also holds in the algebraic category. So assume now $X$ is a smooth irreducible quasiprojective variety. 
By definition, there exists  a smooth irreducible projective variety $\widehat{X}$ containing $X$  as  a Zariski open subset. Denote by $\widehat{D}^{j}, j\in J$, the irreducible components of $\widehat{X} \setminus X$ and by $\widehat{D}^{i}$ the Zariski closure of $D^i$ in $\widehat{X}$ for each $i\in I$. By Hironaka's desingularization, we may suppose that $\widehat{D}:=\sum_{i\in I\cup J}\widehat{D}^i$ is a normal crossing divisor. 
Moreover, since $X\setminus D=\widehat{X}\setminus \widehat{D}$, the representation $\rho$ defines $$\widehat{\rho}=\rho \in \Hom(\pi_1(\widehat{X}\setminus \widehat{D}), \GL)\, .$$
For each $j\in J$, choose a Deligne model $(\trivvb, \xi_j)$ on $(\mathbb{D},0)$ compatible with $\widehat{\rho}$.
Then again by \cite[Section 3.2]{cousinisom}, there exists an analytic logarithmic connection $(\widehat{E}^\mathrm{an},\widehat{\nabla}^\mathrm{an})$ over $\widehat{X}$ with polar divisor $\widehat{D}$ and the prescribed transversal types. Since $\widehat{X}$ is projective, this connection is however analytically isomorphic to an algebraic logarithmic connection $(\widehat{E},\widehat{\nabla})$ on $\widehat{X}$ by GAGA  \cite[Prop. $18$]{GAGA}. Since any logarithmic connection on $\widehat{X}$ restricts to a regular connection on $X$ (see \cite[Thm $4.1$]{MR0417174}), $(E,\nabla):=(\widehat{E},\widehat{\nabla})|_X$ has the desired properties. It remains to show uniqueness up to algebraic isomorphism. By the analytic statement, we already know that the (analytic) isomorphism class of $(E^{\mathrm{an}}, \nabla^{\mathrm{an}})$ is unique. Yet any analytic isomorphism between  {regular} algebraic logarithmic connections $\nabla_1, \nabla_2$ over $X$  is algebraic, for the isomorphism can be seen as a horizontal section of $\nabla_1\otimes \nabla_2^{\vee}$, which is regular by \cite[Prop. $4.6$]{MR0417174}.
\end{proof}
\begin{rem}
Using the above regularity argument, one also obtains that two flat logarithmic connections with the same monodromy are bimeromorphically equivalent. In particular, every logarithmic connection over a curve is mild up to a meromorphic gauge transformation.
\end{rem}
 
 
\subsection{Proof of the universal property}\label{Sec proof UP}
 Lemma \ref{LemStandardiser}, stated in Section \ref{SecDefDef}, implying that under suitable generic conditions, algebraic universal isomonodromic deformations, if they exist, may be chosen to be regular  is now an immediate consequence of the logarithmic Riemann-correspondence. Moreover, we are now able to prove their universal property, also stated in Section \ref{SecDefDef}. 
     \begin{proof}[Proof of Lemma \ref{LemStandardiser}] Let $\mathcal{I}_{(C, \initialBundle, \nabla_0)}=(\mathcal{F}_{(C, D)}, \familyBundle, \nabla , \Psi)$ with $\mathcal{F}_{(C,D)}=(\kappa : \familyCurve\to T,\mathcal{D}, t_0, \psi)$ be an algebraic isomonodromic deformation of $(C, \initialBundle, \nabla_0)$. Let $\rho\in \Hom (\pi_1(\familyCurve\setminus \mathcal{D}, x_0), \GL)$ be a representative of the monodromy $[\rho]$ of $(\familyBundle, \nabla)$. For $i\in\intint{1}{n}$, let $\mathcal{D}^i$ be the component of $\mathcal{D}$ passing through $\psi(x_i)$.  Since by assumption $(C, \initialBundle, \nabla_0)$ is mild, Theorem \ref{thmRH} yields a regular algebraic connection $(\familyBundle', \nabla')$ over $\familyCurve$ with polar divisor $\mathcal{D}$, monodromy  $[\rho]$ and the same transversal types as $\nabla$ at the components $(\mathcal{D}^i)_{i\in\intint{1}{n}}$. Moreover, also by Theorem \ref{thmRH}, there is an isomorphism $\widetilde{\Psi} : (\familyBundle, \nabla)|_{\familyCurve_{t_0}}\stackrel{\sim}{\to }(\familyBundle', \nabla')|_{\familyCurve_{t_0}}$. Then 
   $\mathcal{I}'_{(C, \initialBundle, \nabla_0)}:=(\mathcal{F}_{(C, D)}, \familyBundle', \nabla' ,\widetilde{\Psi}\circ \Psi)$ is a regular algebraic   isomonodromic deformation of $(C, \initialBundle, \nabla_0)$ and there is an analytic isomorphism $(\familyBundle^\mathrm{an}, \nabla^\mathrm{an})\simeq ({\familyBundle'}^\mathrm{an}{\nabla'}^\mathrm{an})$. In particular, the analytification of $\mathcal{I}_{(C, \initialBundle, \nabla_0)}$ is  isomorphic to the analytification of $\mathcal{I}'_{(C, \initialBundle, \nabla_0)}$.
   \end{proof} 
  
      \begin{proof}[Proof of Proposition \ref{propuniviso}]   Let  $\mathcal{I}^{\mathrm{univ, alg}}_{(C, \initialBundle, \nabla_0)}=(\mathcal{F}^{\mathrm{Kur}}_{(C, D)}, \familyBundle, \nabla , \Psi)$ be a regular algebraic universal isomonodromic deformation of $(C, \initialBundle, \nabla_0)$ with parameter space $(T,t_0)$ and  let $\mathcal{I}'_{(C, \initialBundle, \nabla_0)}=(\mathcal{F}_{(C, D)}', \familyBundle', \nabla' , \Psi')$ be a regular algebraic isomonodromic deformation of $(C, \initialBundle, \nabla_0)$ with parameter space $(T',t_0')$. 
     By Lemma \ref{lemsection},   there is an  \'etale base change $\tilde{p} : (\widetilde{T} , \tilde{t}_0) \to (T, t_0)$, such that for $\widetilde{\mathcal{F}}^{\mathrm{Kur}}_{(C, D)} :=\tilde{p}^*\mathcal{F}^{\mathrm{Kur}}_{(C, D)}, $
 there is a section $\sigma : \widetilde{T}\to \widetilde{\familyCurve}$ avoiding the marked points. 
  Since $\widetilde{\mathcal{F}}^{\mathrm{Kur}}_{(C, D)}$ is still Kuranishi, by the universal property of Kuranishi families,  we have an   \'etale base change $ {p} : (T'', t_0'')\to (T' , {t}_0')$, a morphism $\tilde{q} : (T'',t_0'')\to (\widetilde{T}, \tilde{t}_0)$ and an isomorphism
$$\tilde{\morph} : \mathcal{F}''_{(C,D)}:={p}^* \mathcal{F}'_{(C,D)} \stackrel{\sim}{\longrightarrow} \tilde{q}^*\widetilde{\mathcal{F}}^{\mathrm{Kur}}_{(C,D)}\, .$$
In particular, $\sigma$ lifts to a section $\sigma'':=\tilde{f}^*\tilde{q}^*\sigma : T''\to \familyCurve''$ avoiding the marked points of $\mathcal{F}_{(C, D)}''$. Denote by $$\rho''\, , \tilde{\rho} \, \in \, \Hom(\pi_1(\familyCurve''\setminus \mathcal{D}''\, , \sigma''(t_0'')),  \GL)$$ 
representatives of the conjugacy classes of the monodromy representations of $(\familyBundle''; \nabla'')$ and $\tilde{f}^*\tilde{q}^* \tilde{p}^*(\familyBundle,\nabla) $ respectively (with respect to the identity). 
By the Splitting Lemma \ref{LemSplit}, we have $$\pi_1(\familyCurve''\setminus \mathcal{D}'' , \sigma''(t_0''))= \psi''_* \pi_1(C\setminus D, x_0) \rtimes_{\eta} \sigma''_*\pi_1(T'',t_0'')\, .$$
Moreover, if $\rho_{\nabla_0}$ denotes a representative of the monodromy representation of $(\initialBundle, \nabla_0)$ (with respect to the identity), then $\rho''$ and $\tilde{\rho}$ could be chosen so that 
$$\rho''|_{ \psi''_* \pi_1(C\setminus D, x_0) }= \tilde{\rho}|_{ \psi''_* \pi_1(C\setminus D, x_0) }=\psi''_*\rho_{\nabla_0}.$$
Since $\rho_{\nabla_0}$ is irreducible, by Lemma \ref{LemExt}  there is a representation $\lambda \in \Hom(\pi_1(T''\, ,\, t_0'')\, ,  \C^*)$ such that  $\lambda \otimes (\sigma'')^*\rho'' = (\sigma'')^*\tilde{\rho}$.  By the Riemann-Hilbert correspondence, there is  a regular flat algebraic connection $(L, \xi)$  of rank $1$ over $T''$, without poles, whose monodromy representation is $\lambda^{-1}$. The monodromy representation of its lift ${\kappa''}^*(L, \xi)$ is the trivial extension of ${\sigma''}^*\lambda^{-1}$ to a representation $\psi''_* \pi_1(C\setminus D, x_0) \rtimes_{\eta} \sigma''_*\pi_1(T'',t_0'')\, \rightarrow \C^*$. Now the monodromy representations of $(\familyBundle''; \nabla'')\otimes {\kappa''}^*(L, \xi)$ and $\tilde{f}^*q^*p^*(\familyBundle,\nabla) $ coincide. Both connections are regular, have same monodromy representations and same transversal models, given by $(\initialBundle, \nabla_0)$. Hence they are isomorphic by the logarithmic Riemann-Hilbert correspondence. 
\end{proof} 

  \section{The monodromy of the monodromy}\label{SecMonMon}
In this section, we introduce the so-called group of  mapping classes of a $\upomega$-family, which is the image of a canonical morphism from the fundamental group of the parameter space of the family to the mapping class group of the central fiber.  For an isomonodromic deformation, the action on the monodromy representation of the initial connection by the group of  mapping classes of the underlying family of curves corresponds to the monodromy of the monodromy representation. 
Under suitable conditions,  this group can be canonically translated into a subgroup of $\MCG$.  

\subsection{Mapping classes of the central fiber}\label{Sec map}
As usual, let $(C,D)$ be a stable $n$-pointed genus $g$ curve. Let
$\mathcal{F}_{(C,D)}$ be a $\upomega$-family with parameter space $(T,t_0)$.  Let  $\beta : [0,1]\rightarrow T$ be a closed path with endpoint $t_0$, \textit{i.e.} a continous map such that $\beta(0)=\beta(1)=t_0$. 
By \cite[Cor. $10.3$]{MR1249482}, the pullback bundle $\beta^*(\familyCurve,\mathcal{D}) \rightarrow [0,1]$ possesses a topological trivialization $\Phi :(C, D)\times [0,1] \stackrel{\sim}{\to}\beta^*(\familyCurve,\mathcal{D})$.
For $s\in [0,1]$, we denote $$\Phi_s:=\Phi|_{(C, D)\times \{s\}} \, $$ and deduce a homeomorphism from the central fiber seen over $\{1\}$ to the central fiber seen over $\{0\}$ given by 
 $$\psi^{-1}\circ \Phi_0\circ \Phi_1^{-1} \circ \psi: (C, D) \stackrel{\sim}{\to}  (C, D)\, .$$ Its isotopy class
 shall be called the \emph{mapping class associated to $\beta$ and $\mathcal{F}_{(C,D)}$} and denoted  $$\mathrm{map}_{\mathcal{F}_{(C,D)}}(\beta)\, .$$ 

 \begin{lem} The mapping class $\mathrm{map}_{\mathcal{F}_{(C,D)}}(\beta)$ is well-defined, \textit{i.e.} it does not depend on the choice of a trivialization $\Phi$. Moreover,  $\mathrm{map}_{\mathcal{F}_{(C,D)}}(\beta)$ only depends on the homotopy class of $\beta$. 
 \end{lem}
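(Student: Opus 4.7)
The plan is to treat both assertions as consequences of the fact that a topologically locally trivial family of pointed surfaces over a contractible base admits a trivialization, and that any two such trivializations differ by a continuous family of self-homeomorphisms of $(C,D_C)$.

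For independence of the trivialization, let $\Phi$ and $\Phi'$ be two topological trivializations of $\beta^{*}(X,D)\to[0,1]$ provided by \cite[Cor.~$10.3$]{MR1249482}. The composition $h:=(\Phi')^{-1}\circ\Phi$ is a homeomorphism of $(C,D_{C})\times[0,1]$ commuting with the projection to $[0,1]$, hence of the form $(x,s)\mapsto(h_{s}(x),s)$ where $s\mapsto h_{s}$ is a continuous path in $\mathrm{Homeo}_{+}(C,D_{C})$. From $\Phi_{s}=\Phi'_{s}\circ h_{s}$ one gets
$$\Phi_{0}\circ\Phi_{1}^{-1}\;=\;\Phi'_{0}\circ(h_{0}\circ h_{1}^{-1})\circ(\Phi'_{1})^{-1}.$$
The path $s\mapsto h_{0}\circ h_{s}^{-1}$ is an isotopy from $h_{0}\circ h_{1}^{-1}$ to $\mathrm{id}_{C}$; since each $h_{s}$ preserves $D_{C}$ as a set, this is an isotopy relative to $Y^{n}$ via $\varphi$, after transport by $\psi$. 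Conjugating the equation above by $\psi$ therefore shows that $\psi^{-1}\circ\Phi_{0}\circ\Phi_{1}^{-1}\circ\psi$ and $\psi^{-1}\circ\Phi'_{0}\circ(\Phi'_{1})^{-1}\circ\psi$ represent the same isotopy class. (In fact a continuity argument on $\Phi_{s}(x_{i})\in D$ combined with $\beta(0)=\beta(1)=t_{0}$ shows both maps fix each $x_{i}\in D_{C}$, so the class belongs to $\MCG$.)

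For dependence only on the homotopy class of $\beta$, let $H:[0,1]^{2}\to T$ be a homotopy rel.\ endpoints between two paths $\beta,\beta'$ with $H(\cdot,0)=\beta$ and $H(\cdot,1)=\beta'$. The pulled back family $H^{*}(X,D)\to[0,1]^{2}$ is topologically locally trivial over the contractible base $[0,1]^{2}$, so by \cite[Cor.~$10.3$]{MR1249482} it admits a topological trivialization $\Psi:(C,D_{C})\times[0,1]^{2}\stackrel{\sim}{\to}H^{*}(X,D)$. The restrictions $\Psi|_{\cdot,0}$ and $\Psi|_{\cdot,1}$ are trivializations of $\beta^{*}(X,D)$ and $(\beta')^{*}(X,D)$ respectively, which by the previous paragraph may be used to compute the associated mapping classes. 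Denoting $\Psi_{(s,u)}:=\Psi|_{(C,D_{C})\times\{(s,u)\}}$, the formula
$$u\longmapsto\psi^{-1}\circ\Psi_{(0,u)}\circ\Psi_{(1,u)}^{-1}\circ\psi$$
defines a continuous path in $\mathrm{Homeo}_{+}(C,D_{C})$ joining the representative of $\mathrm{map}_{\mathcal{F}_{(C,D_{C})}}(\beta)$ at $u=0$ to the representative of $\mathrm{map}_{\mathcal{F}_{(C,D_{C})}}(\beta')$ at $u=1$, hence an isotopy.

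The only subtle point, which I would treat carefully, is to verify that the isotopies constructed above are indeed relative to $Y^{n}$ (or at least preserve it setwise, so that the mapping class is defined in $\fMCG$), which amounts to checking that in each trivialization the image of a marked point $x_{i}$ traces a section of $\beta^{*}D$ (resp.\ $H^{*}D$), and therefore moves within a single component. This follows from the fact that $D$ is a disjoint union of the sections $\sigma_{j}(T)$ and that $[0,1]$ (resp.\ $[0,1]^{2}$) is connected. Everything else is a matter of chasing the identities above.
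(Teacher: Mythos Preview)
Your proof is correct and follows essentially the same strategy as the paper's. For independence of the trivialization, your isotopy $s\mapsto\Phi'_0\circ(h_0\circ h_s^{-1})\circ(\Phi'_1)^{-1}$ unwinds (via $h_s=(\Phi'_s)^{-1}\circ\Phi_s$) to exactly the paper's formula $\Phi'_0\circ(\Phi'_s)^{-1}\circ\Phi_s\circ\Phi_1^{-1}$, so the two arguments are literally the same computation viewed differently. For homotopy invariance, the paper parametrizes the homotopy by a closed disc rather than a square: trivializing $\theta^*(X,D)$ over $\overline{\mathbb{D}}$ and restricting to the two boundary arcs produces trivializations of $\beta^*(X,D)$ and $(\beta')^*(X,D)$ that \emph{share the same values at both endpoints}, so the two representatives $\Phi^1_0\circ(\Phi^1_1)^{-1}$ and $\Phi^2_0\circ(\Phi^2_1)^{-1}$ are literally equal, and one then invokes part one. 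Your square-based argument instead produces an explicit isotopy $u\mapsto\psi^{-1}\circ\Psi_{(0,u)}\circ\Psi_{(1,u)}^{-1}\circ\psi$; this is a perfectly valid variant of the same idea (trivialize over a contractible $2$-cell), just slightly less economical. Your added discussion of why the isotopies preserve the marked points pointwise is a welcome clarification that the paper leaves implicit.
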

 
 \begin{proof}
 For fixed $\beta$, take two trivializations : $\Phi,\widetilde{\Phi}: (C, D)\times [0,1] \stackrel{\sim}{\to}\beta^*(\familyCurve,\mathcal{D})$. The family  $\widetilde{\Phi}_0 \circ\widetilde{\Phi}_s^{-1}\circ \Phi_s\circ  \Phi_1^{-1}$ gives an isotopy from $\Phi_0\circ \Phi_1^{-1}$ to $\widetilde{\Phi}_0\circ \widetilde{\Phi}_1^{-1}$.\\
  Consider now two paths $\beta_1$ and $\beta_2$ that are homotopic relative to their endpoints. By definition, there exists  a continuous map $\theta : \overline{\D} \rightarrow T$, where $ \overline{\D}$ denotes the closed unit disc, such that $\beta_2(s)=\theta(\mathrm{e}^{i\pi (1+s)})$ and $\beta_1(s)=\theta(\mathrm{e}^{i\pi (1-s)})$. Since  $ \overline{\D}$ is contractible, by \cite[Cor. $10.3$]{MR1249482}, there is a trivialization $\Phi$ of $\theta^*(\familyCurve,\mathcal{D})$. It induces trivializations $\Phi^i$ of $\beta_i^*\familyCurve$ for $i=1,2$. Since they are both induced by $\Phi$, we have $\Phi^1_0=\Phi^2_0=\Phi_{-1}$ and $\Phi^1_1=\Phi^2_1=\Phi_{1}$.  \end{proof}
  
   \begin{prop} \label{lem relation 2 actions}
Let
$\mathcal{F}_{(C,D)}=(\kappa : \familyCurve\to T,\mathcal{D}, t_0, \psi)$ be a $\upomega$-family as in Section \ref{SecSetup}. Assume that none of the fibers $(\familyCurve_t, \mathcal{D}_{t})$ has exceptional automorphisms. Let $\mathbf{x}$ be a labelling of $D$ and denote  
$\clF : T\rightarrow \Mgn\setminus \Bgn$ the corestriction of the induced classifying map  $\class (\mathcal{F})$ (see Section \ref{Sec univ fam}).  Let $\varphi : (\Sigma_g,y^n)\stackrel{\sim}{\to} (C,\mathbf{x})$ be an orientation preserving homeomorphism and denote by $\hat{\star}:=[C,D,\varphi]$ the corresponding point in $\Tgn$.  Then  for all $ \beta \in \pi_1({T},t_0)$, the following equation holds in $\MCG/K_{g,n}$:
$$\varphi^{-1}\circ \mathrm{map}_{\mathcal{F}_{(C,D)}}(\beta)\circ \varphi=\tautgn(\clF_*\beta) \, ,$$
where $\tautgn$ is the tautological morphism 
$\tautgn :\pi_1( \Mgn\setminus \Bgn, \star)\to \MCG/K_{g,n}$ (see \eqref{def phi}
) and $\star:=[C,\mathbf{x}]\in \Mgn$.
 \end{prop}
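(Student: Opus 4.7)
The plan is to lift the loop $\clF_*\beta$ through the covering $\forggn \colon \Tgn\setminus \Rgn \to \Mgn\setminus \Bgn$, read off the monodromy on one hand as $\tautgn(\clF_*\beta)$ by the defining property \eqref{def phi}, and on the other hand by explicit computation in terms of a topological trivialization of $\beta^*(X,D)$. The comparison will identify this monodromy with $\varphi^{-1}\circ\mathrm{map}_{\mathcal{F}_{(C,D_C)}}(\beta)\circ\varphi$ modulo $K_{g,n}$.

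Choose an orientation preserving homeomorphism $\varphi\colon(\Sigma_g, y^n)\to (C,\mathbf{x})$ respecting the labellings and set $\tilde t_0:=[C,D_C,\varphi]\in\Tgn$. Since by hypothesis no fiber has exceptional automorphisms, we have $\clF(T)\subset \Mgn\setminus \Bgn$ and $\tilde t_0\in\Tgn\setminus\Rgn$. Given a loop $\beta\in\pi_1(T,t_0)$, pick a topological trivialization $\Phi\colon (C,D_C)\times[0,1]\stackrel{\sim}{\to}\beta^*(X,D)$. Reparametrizing the source factor by the self-homeomorphism $\psi^{-1}\circ\Phi_0$ of $(C,D_C)$, which by the previous lemma does not change $\mathrm{map}_{\mathcal{F}_{(C,D_C)}}(\beta)$, we may assume $\Phi_0=\psi$. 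Now set
$$\tilde\gamma\colon [0,1]\to \Tgn,\qquad s\mapsto [X_{\beta(s)},D|_{X_{\beta(s)}},\Phi_s\circ\varphi].$$
This is continuous, being the classifying map $\classan$ of the analytic family with Teichm\"uller structure $(\beta^*\mathcal{F}_{(C,D_C)},\Phi\circ(\varphi\times\mathrm{id}))$ over $[0,1]$, and $\tilde\gamma(0)=\tilde t_0$ by the choice $\Phi_0=\psi$.

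I then need to check that $\tilde\gamma$ is a lift of $\clF\circ\beta$, i.e.\ that the labelling induced on $D|_{X_{\beta(s)}}$ by $\Phi_s\circ\varphi$ matches $\mathbf{D}|_{X_{\beta(s)}}$. For each $i$, the map $s\mapsto\Phi_s(x_i)$ is a continuous section of $\beta^*D\to[0,1]$ starting at $\sigma_i(t_0)\in \beta^*D_i$. Since the pullbacks $\beta^*D_j$ are pairwise disjoint connected components of $\beta^*D$ (the $\sigma_j$ being disjoint sections of $\kappa$), this section remains in $\beta^*D_i$, so $\Phi_s\circ\varphi(y_i)=\sigma_i(\beta(s))$, as desired. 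Consequently $\tilde\gamma$ lifts $\clF\circ\beta$ starting at $\tilde t_0$, and by the very definition of the tautological morphism in \eqref{def phi} we obtain $\tilde\gamma(1)=\tautgn(\clF_*\beta)\cdot\tilde t_0$.

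It remains to compute $\tilde\gamma(1)$ directly. Using the isomorphism $\psi\colon(C,D_C)\stackrel{\sim}{\to}(X_{t_0},D|_{X_{t_0}})$ to move the Teichm\"uller structure back to $C$, we find
$$\tilde\gamma(1)=[X_{t_0},D|_{X_{t_0}},\Phi_1\circ\varphi]=[C,D_C,\psi^{-1}\circ\Phi_1\circ\varphi]=[C,D_C,\varphi\circ h^{-1}]=[h]\cdot\tilde t_0,$$
where $h:=\varphi^{-1}\circ\Phi_1^{-1}\circ\psi\circ\varphi\in\MCG$. Since $\Phi_0=\psi$, the definition of the mapping class reads $\mathrm{map}_{\mathcal{F}_{(C,D_C)}}(\beta)=[\Phi_1^{-1}\circ\psi]$, so that $h=\varphi^{-1}\circ\mathrm{map}_{\mathcal{F}_{(C,D_C)}}(\beta)\circ\varphi$. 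Combining both expressions for $\tilde\gamma(1)$ yields $[h]\cdot\tilde t_0=\tautgn(\clF_*\beta)\cdot\tilde t_0$ in $\Tgn$; because $\tilde t_0\notin\Rgn$, its stabilizer in $\MCG/K_{g,n}$ is trivial, and the claimed equality holds in $\MCG/K_{g,n}$. The only delicate bookkeeping step is the adjustment $\Phi_0=\psi$ together with the verification that the path $\tilde\gamma$ really lands in $\Tgn$ (and not a multivalued version of it), which boils down to the disjointness of the labelled sections $\sigma_i$.
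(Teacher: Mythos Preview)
Your proof is correct and more direct than the paper's. Both arguments identify $\tautgn(\clF_*\beta)$ by lifting $\clF\circ\beta$ to a path in $\Tgn$ and reading off its endpoint, but the paper first passes to a universal cover $\widetilde T\to T$, pulls back the family, and invokes the universal property of the Teichm\"uller curve $\mathcal{F}^+_{g,n}$ to obtain a classifying map $\widetilde T\supset\widetilde\Delta\to\Tgn$; the marking $\varphi$ is then \emph{defined} through the resulting fiberwise identification (equation~\eqref{idee0}), and the comparison of endpoints proceeds via a commutative diagram involving $\Phi_{g,n}$. You bypass all of this by directly writing down the lift $\tilde\gamma(s)=[X_{\beta(s)},D|_{X_{\beta(s)}},\Phi_s\circ\varphi]$ from a trivialization of $\beta^*(X,D)$, normalized so that $\Phi_0=\psi$. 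This is cleaner, and in fact shows that \emph{any} labelling-compatible $\varphi$ works, slightly strengthening the statement. The only point one might want to make more explicit is that $\tilde\gamma$ takes values in $\Tgn\setminus\Rgn$ (so that the covering-space definition of $\tautgn$ applies), but this is immediate from the hypothesis that no fiber has exceptional automorphisms, since $\forggn^{-1}(\Bgn)=\Rgn$.
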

 
 \begin{proof}
 Denote $\mathcal{F}^{+}_{g,n}=(\mathcal{F}_{g,n}, \Phi_{g,n})$ the universal Teichm\"uller curve 
 $\mathcal{F}_{g,n}=(\kappa_{g,n} : \mathcal{X}\to \Tgn, \mathcal{Y})$ endowed with the Teichm\"uller structure 
 $ \Phi_{g,n}: (\Sigma_g,Y^n)\times \Tgn \stackrel{\sim}{\to} (\mathcal{X}, \mathcal{Y})$. For any point $t \in \Tgn$, we shall denote $$\Phi^{g,n}_{t}:=\Phi_{g,n}|_{(\Sigma_g,Y^n)\times \{t \}} : (\Sigma_g,Y^n)\times \{t \}  \stackrel{\sim}{\to} (\mathcal{X}_{t}, \mathcal{Y}_t )\, .$$
 Let $p : (\widetilde{{T}}, \tilde{t}_0) \to ({T},t_0)$ be a universal cover and consider the pulled-back family
 $$\widetilde{\mathcal{F}}=(\widetilde{\kappa} : \widetilde{\mathcal{C}}\to \widetilde{{T}},\widetilde{\mathcal{D}}):=p^*(\kappa : \mathcal{C}\to {T},\mathcal{D})\, .$$
Now for any contractible analytic submanifold $\widetilde{\Delta} \subset \widetilde{{T}}$ containing $\tilde{t}_0$, there is a topological trivialization $${\Phi}: (C,D)\times \widetilde{\Delta} 
\stackrel{\sim}{\longrightarrow} (\widetilde{\mathcal{C}} , \widetilde{\mathcal{D}})|_{\widetilde{\Delta}}
$$ of $\widetilde{\mathcal{F}}|_{\widetilde{\Delta}}$, unique up to isotopy, such that ${\Phi}_{\tilde{t}_0}=\psi$ with respect to the identification  $$(\widetilde{\mathcal{C}}_{\tilde{t}_0}, \widetilde{\mathcal{D}}_{\tilde{t}_0}){=}(\mathcal{C}_{t_0},\mathcal{D}_{t_0})$$ provided by pullback. 
We denote $\widetilde{\Phi}:={\Phi}\circ ({\varphi}\times \mathrm{id})$.  
Setting 
$\widetilde{\mathcal{F}}^+:=(\widetilde{\mathcal{F}}|_{\widetilde{\Delta}}, \widetilde{\Phi})$ defines an analytic family of compact Riemann surfaces with marked points and   Teichm\"uller structure. 
 By the universal property of the  Teichm\"uller curve, up to modifying $\widetilde{\Phi}$ by a fiber-preserving isotopy, there exists a unique isomorphism $\morph$ of complex manifolds fitting into the following commutative diagram:
     \[ \xymatrix{
  (\Sigma_g, Y^n)\times \widetilde{\Delta} \ar@{=}[rr]\ar[d]_{\widetilde{\Phi} }&& (\Sigma_g, Y^n)\times \widetilde{\Delta}\ar[d]^{\classan(\widetilde{\mathcal{F}}^+)^*\Phi_{g,n}}\\
  (\widetilde{\mathcal{C}}, \widetilde{\mathcal{D}})|_{\widetilde{\Delta}}\ar[rr]^{\morph}_{\sim}\ar[d]_{\widetilde{\kappa}}&& \classan(\widetilde{\mathcal{F}}^+)^*(\mathcal{X}, \mathcal{Y})\ar[d]^{\classan(\widetilde{\mathcal{F}}^+)^*\kappa_{g,n}}\\
  \widetilde{\Delta} \ar@{=}[rr]&&\widetilde{\Delta}\, .
     } \] 
       Now let $[\beta]\in \pi_1({T},t_0)\setminus\{1\}$ and consider $\widetilde{\beta}: [0,1]\to \widetilde{{T}}$, the lift of $\beta$ with starting point $\tilde{t}_0$. If the representative $\beta$ of the homotopy class $[\beta]$ is well chosen, then $\widetilde{\beta}$ is a $\mathcal{C}^{\infty}$-embedding. By existence of tubular neighborhoods, there is a contractible neighborhood $\widetilde{\Delta}$ of $\tilde{t}_0$ as above, containing $\widetilde{\beta}$. 
       We claim that, up to isotopy, 
   \begin{equation}\label{idee1} \mathrm{map}_{\mathcal{F}_{(C,D)}}(\beta)=  {\Phi}_{\widetilde{\beta}(1)}^{-1}\circ  {\Phi}_{\tilde{t}_0}\, .
   \end{equation}
   Indeed, we have $\beta^*(\mathcal{C}, \mathcal{D})=(p\circ \widetilde{\beta})^*(\mathcal{C}, \mathcal{D})=\widetilde{\beta}^*p^*(\mathcal{C}, \mathcal{D})=\widetilde{\beta}^*(\widetilde{\mathcal{C}},\widetilde{\mathcal{D}})$.
    The claim then follows from  the fact that $ \psi^{-1}\circ  {\Phi}_{\tilde{t}_0}$ is the identity and from
the definition of the mapping class.

     Denote $\widehat{\beta}:=\classan(\widetilde{\mathcal{F}}^+)_*\widetilde{\beta}$, which is a path in $\Tgn$ with starting point $\hat{\star}$.
   By our definitions, the  following diagram is commutative if we remove the dotted arrow.    
     \[ \xymatrix{
  (\Sigma_g, Y^n)\times \{\tilde{t}_0\} \ar[rr]^{\widetilde{\Phi}_{\tilde{t}_0}}_{\sim}
  \ar@{=}[d]   &&(\widetilde{\mathcal{C}}_{\tilde{t}_0}, \widetilde{\mathcal{D}}_{{\tilde{t}_0}})\ar@{=}[r] \ar[d]_{\morph_{\tilde{t}_0}}^{\hskip3pt \sim 
   }
&(\widetilde{\mathcal{C}}_{\widetilde{\beta}(1)}, \widetilde{\mathcal{D}}_{{\widetilde{\beta}(1)}})  \ar[d]^{\morph_{\widetilde{\beta}(1)}}_{\hskip3pt \sim
}
   &&  (\Sigma_g, Y^n)\times \{\widetilde{\beta}(1)\} \ar[ll]_{\widetilde{\Phi}_{\widetilde{\beta}(1)}}^{\sim} \ar@{=}[d]    \\
 (\Sigma_g, Y^n)\times \{\hat{\star}\}\ar[rr]_{\Phi^{g,n}_{\hat\star}}^{\sim}&& (\mathcal{X}_{\hat{\star}}, \mathcal{Y}_{{\hat{\star}}}) \ar@{..>}
 [r]^{
 \hskip-15pt\sim}_{
  \hskip-15pt \widehat{\psi}} &  (\mathcal{X}_{\widehat{\beta}(1)}, \mathcal{Y}_{{\widehat{\beta}(1)}})&&(\Sigma_g, Y^n)\times \{\widehat{\beta}(1)\} \ar[ll]^{\Phi^{g,n}_{\widehat{\beta}(1)}}_{\sim}
   } \]  
   We define the induced isomorphism of pointed curves  \begin{equation}\label{def psi}\widehat{\psi}=\morph_{\tilde{\beta}(1)}\circ(\morph_{t_0})^{-1},\end{equation} so that adding the dotted arrow maintains this commutativity.     
We have 
     \begin{equation}\label{idee2}\left\{\begin{array}{ccccc} 
   \Phi^{g,n}_{\hat{\star}} &=&\morph_{\tilde{t}_0}\circ \widetilde{\Phi}_{\tilde{t}_0}&=&\morph_{\tilde{t}_0}\circ  {\Phi}_{\tilde{t}_0}\circ  \varphi\vspace{.1cm}\\
    \Phi^{g,n}_{\widehat{\beta}(1)}&=&
    \morph_{\widetilde{\beta}(1)}\circ  \widetilde{\Phi}_{\widetilde{\beta}(1)}&=&
    \morph_{\widetilde{\beta}(1)}\circ  {\Phi}_{\widetilde{\beta}(1)} \circ \varphi. \vspace{.1cm}\\
     \end{array}\right.
   \end{equation}
 
On the other hand, $\clF_*\beta$ is a closed path in $\Mgn\setminus \Bgn$ with end point $\star$. By construction, it lifts, with respect to the forgetful map $\forggn$, to $\widehat{\beta}$, with $\widehat{\beta}(0)=\hat{\star}$. By definition of the tautological morphism $\tautgn$, we thus have, for $[h]:= \tautgn (\clF_*\beta) \in \MCG/K_{g,n}$: 
$$[h]\cdot \left[\mathcal{X}_{\hat{\star}}, \mathcal{Y}_{{\hat{\star}}}, \Phi^{g,n}_{\hat{\star}}\right]=\left[\mathcal{X}_{\widehat{\beta}(1)}, \mathcal{Y}_{{\widehat{\beta}(1)}}, \Phi^{g,n}_{\widehat{\beta}(1)} \right]\, .$$

\noindent By the definition of the action of the mapping class group on $\Tgn$, we now have
$$[h]\cdot  \left[\mathcal{X}_{\hat{\star}}, \mathcal{Y}_{{\hat{\star}}}, \Phi^{g,n}_{\hat{\star}}\right] = [h]\cdot \left[\mathcal{X}_{\widehat{\beta}(1)}, \mathcal{Y}_{{\widehat{\beta}(1)}},\widehat{\psi}\circ \Phi^{g,n}_{\hat{\star}} \right]= \left[\mathcal{X}_{\widehat{\beta}(1)}, \mathcal{Y}_{{\widehat{\beta}(1)}},\widehat{\psi}\circ \Phi^{g,n}_{\hat{\star}} \circ h^{-1}\right]\, .$$

\noindent Hence there is an element $[k]\in K_{g,n}$ such that, up to isotopy, 
$$\widehat{\psi}\circ \Phi^{g,n}_{\hat{\star}} =  \Phi^{g,n}_{\widehat{\beta}(1)} \circ h\circ k\, .$$
Combined with \eqref{def psi} and \eqref{idee2}, this implies, up to isotopy,
 $$  {\Phi}_{\widetilde{\beta}(1)}^{-1} \circ   {\Phi}_{\tilde{t}_0} =  \varphi\circ h \circ k\circ \varphi^{-1}\, ,$$
 which by \eqref{idee1} and the definitions of $h$ and $k$ yields the desired result.
 \end{proof}

\subsection{Splitting and the mapping class group}\label{SecNotBul}
   
 Let  $\mathcal{F}_{(C,D)}=(\kappa : \familyCurve \to T,\mathcal{D}, t_0, \psi)$  be a $\upomega$-family of $n$-pointed genus $g$ curves as in Section~\ref{SecSetup}. Assume there is a section $\sigma : T\to \familyCurve^0:=\familyCurve\setminus \mathcal{D}$ of $\kappa$. Then we can define a  $\upomega$-family of $n+1$-pointed genus $g$ curves
  $\mathcal{F}^{\bullet}_{(C,D^\bullet)}:=(\kappa : \familyCurve\to T,\mathcal{D}^\bullet, t_0, \psi)$ by setting $\mathcal{D}^\bullet:=\mathcal{D}+\sigma(T)$ and $D^\bullet:=D+x_0$, where $x_0:=\psi^{-1}(\sigma(t_0))$.  To a labelling $\mathbf{x} =(x_1, \ldots, x_n)$   of $D$ we can associate a labelling $\mathbf{x}^\bullet:=(x_1, \ldots, x_n,x_0)$   of $D^\bullet$. Note that if a fiber of $\mathcal{F}^\bullet$ has exceptional automorphisms, then the corresponding fiber of $\mathcal{F}$ also has exceptional automorphisms.
  
   If none of the fibers of $\mathcal{F}^\bullet$  has exceptional automorphisms, we may corestrict the classifying map $\class (\mathcal{F}^\bullet)$ to obtain a morphism
  $$\clF^{\bullet}: \mathcal{T}\to \Mgnplusun \setminus \Bgnplusun \, .$$
  Let  $\varphi : (\Sigma_g,y^n, y_0)\stackrel{\sim}{\to} (C,\mathbf{x}, x_0)$ be an orientation preserving homeomorphism and denote $\hat{\star}^{\bullet}:=[C,D^\bullet,\varphi]\in \mathcal{T}_{g,n+1}$ and ${\star}^{\bullet}:=[C,\mathbf{x}^\bullet]\in \Mgnplusun$. 
  Note that since we assumed $2g-2+n>0$, we have  $K_{g,n+1}=\{1\}$ according to Lemma \ref{lemme Kgn}. 
 We obtain a tautological morphism 
 $$\taut_{\hat{\star}^{\bullet}}: \pi_1(\Mgnplusun \setminus \Bgnplusun, {\star}^{\bullet})\to  \Gamma_{g,n+1}$$
 as in \eqref{def phi}.

 \begin{prop}\label{corSplit} Let $\mathcal{F}^{\bullet}_{(C,D^\bullet)}=(\kappa : \familyCurve\to T,\mathcal{D}^\bullet, t_0, \psi)$  be a $\upomega$-family of $n+1$-pointed genus $g$ curves as above.    Assume that none of the fibers of $\mathcal{F}^\bullet$ has exceptional automorphisms. Let $\varphi : (\Sigma_g,y^n, y_0)\stackrel{\sim}{\to} (C,\mathbf{x}, x_0)$ be an orientation preserving homeomorphism. Then
    \begin{equation}\label{eqSplit2} \pi_1(C^0,\sigma(t_0)) =(\psi\circ \varphi)_*\gf\rtimes_{\eta} \sigma_*\pi_1({T},t_0)\, ,\end{equation}
 where for all $\alpha \in \gf$ and $ \beta \in \pi_1({T},t_0)$, we have $$\begin{array}{rcl}\eta(\sigma_*\beta ) ((\psi\circ \varphi)_*\alpha) &=& \sigma_*\beta \cdot (\psi\circ \varphi)_*\alpha  \cdot \sigma_*\beta^{-1}\vspace{.1cm}\\
 &=& (\psi\circ \varphi)_* \, \mathfrak{a}\left( \taut_{\hat{\star}^{\bullet}}({\clF^{\bullet}}_*\, \beta)\right)(\alpha) \, . \end{array}$$ 
Here we adopt the notation above and denote $\mathfrak{a}(h)(\alpha)=h_*\alpha$ for all $h\in \Gamma_{g,n+1}$ and $\alpha \in \gf$ as in the introduction.   \end{prop}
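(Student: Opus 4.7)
The plan is to combine the Splitting Lemma \ref{LemSplit}, applied to the $n$-pointed family $\mathcal{F}_{(C,D_C)}$ with section $\sigma$, and Proposition \ref{lem relation 2 actions}, applied now to the $(n+1)$-pointed family $\mathcal{F}^{\bullet}_{(C,D_C^\bullet)}$. Since by construction $\sigma$ takes values in $X\setminus D = X^0$, Lemma \ref{LemSplit} applies verbatim and gives
$$\pi_1(X^0,\sigma(t_0)) \;=\; \psi_*\pi_1(C\setminus D_C,x_0)\rtimes_\eta \sigma_*\pi_1(T,t_0).$$
Any orientation preserving homeomorphism $\varphi:(\Sigma_g,y^n,y_0)\stackrel{\sim}{\to}(C,\mathbf{x},x_0)$ then converts the fiber factor into $(\psi\circ\varphi)_*\gf$, yielding the first equality of \eqref{eqSplit2}; the choice of $\varphi$ is postponed to the final step.

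Next I would compute $\eta(\sigma_*\beta)$ geometrically in terms of the mapping class of $\beta\in \pi_1(T,t_0)$ associated to $\mathcal{F}^\bullet$. Pick a topological trivialization $\Phi:(C,D_C^\bullet)\times [0,1]\stackrel{\sim}{\to}\beta^*(X,D^\bullet)$ of the pullback family $\beta^*\mathcal{F}^\bullet$. Since the pullback of $\sigma$ furnishes a preferred section over the contractible interval $[0,1]$, one can arrange $\Phi_0 = \psi$ together with $\Phi_s(x_0) = \sigma(\beta(s))$ for every $s\in[0,1]$. For $\alpha\in \pi_1(C\setminus D_C,x_0)$, the conjugated loop $\sigma_*\beta\cdot \psi_*\alpha\cdot \sigma_*\beta^{-1}$ can then be homotoped inside $X^0$ to $\psi\circ h\circ \alpha$, where $h := \psi^{-1}\circ \Phi_0\circ \Phi_1^{-1}\circ \psi$ is precisely $\mathrm{map}_{\mathcal{F}^\bullet}(\beta)$ from Section \ref{Sec map}. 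The homotopy is the standard one for section-based fibrations: use $\Phi$ to slide the middle arc from the fiber above $t_0$ across the $[0,1]$-parameter and absorb the two cylindrical legs $\sigma_*\beta^{\pm 1}$. This yields
$$\eta(\sigma_*\beta)(\psi_*\alpha) \;=\; \psi_*\bigl(\mathrm{map}_{\mathcal{F}^\bullet}(\beta)_*\alpha\bigr).$$

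Finally, to convert $\mathrm{map}_{\mathcal{F}^\bullet}(\beta)$ into $\tautgnplusun({\clF^\bullet}_*\beta)$, I would apply Proposition \ref{lem relation 2 actions} directly to $\mathcal{F}^\bullet$. The hypothesis that no fiber of $\mathcal{F}^\bullet$ has exceptional automorphisms is precisely the input required, and the conclusion furnishes an orientation preserving homeomorphism $\varphi:(\Sigma_g,y^n,y_0)\to(C,\mathbf{x},x_0)$ such that
$$\varphi^{-1}\circ \mathrm{map}_{\mathcal{F}^\bullet}(\beta)\circ \varphi \;=\; \tautgnplusun({\clF^\bullet}_*\beta) \quad \textrm{in } \Gamma_{g,n+1}/K_{g,n+1}.$$
Plugging this into the previous display, and recalling that $\mathfrak{a}(h)(\alpha) = h_*\alpha$, produces the second equality of \eqref{eqSplit2}.

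The main obstacle is the second step: one must set up the section-compatible trivialization $\Phi$ carefully and then produce the explicit fiberwise homotopy realizing the conjugation in $\pi_1(X^0,\sigma(t_0))$ as pushforward under the mapping class. A minor point is that $\mathfrak{a}$ is \emph{a priori} defined on $\Gamma_{g,n+1}$ whereas $\tautgnplusun$ takes values in $\Gamma_{g,n+1}/K_{g,n+1}$; however, step two pins down a canonical lift, namely $\mathrm{map}_{\mathcal{F}^\bullet}(\beta)$ itself, so the right-hand side of the stated formula is unambiguous.
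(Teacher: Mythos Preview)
Your proof follows essentially the same route as the paper's: apply the Splitting Lemma~\ref{LemSplit}, identify the conjugation action with the push-forward by the mapping class $\mathrm{map}_{\mathcal{F}^\bullet}(\beta)$ via a trivialization of $\beta^*(X,D^\bullet)$, and then invoke Proposition~\ref{lem relation 2 actions} to convert this mapping class into $\tautgnplusun(\clF^\bullet_*\beta)$.

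One point deserves sharpening. Your final paragraph treats the discrepancy between $\Gamma_{g,n+1}$ and $\Gamma_{g,n+1}/K_{g,n+1}$ by appealing to a ``canonical lift'' furnished by $\mathrm{map}_{\mathcal{F}^\bullet}(\beta)$. This is unnecessary and slightly misleading: the formula in the statement involves $\mathfrak{a}(\tautgnplusun(\cdot))$, which is only well-defined if $\tautgnplusun$ already lands in $\Gamma_{g,n+1}$. The paper's resolution is simply to observe that since $(C,D_C)$ is stable, $2g-2+n>0$, hence $(g,n+1)$ is neither $(2,0)$ nor $(1,1)$, so $K_{g,n+1}=\{1\}$ by Lemma~\ref{lemme Kgn}. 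With this, Proposition~\ref{lem relation 2 actions} gives the identity in $\Gamma_{g,n+1}$ itself, and no lifting issue arises.
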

  \begin{proof} 
By Proposition \ref{lem relation 2 actions},   the following equation holds in $\Gamma_{g,n+1}=\Gamma_{g,n+1}/K_{g,n+1}$ for every $\beta \in \pi_1(\mathcal{T},t_0)$:
 \begin{equation}\label{mapettaut}\varphi^{-1}\circ \mathrm{map}_{\mathcal{F}^\bullet_{(C,D^\bullet)}}(\beta)\circ \varphi=\taut_{\hat{\star}^{\bullet}}({\clF^\bullet}_*\beta)\, .\end{equation}
  Denote $C^0:=C\setminus D$. We claim that for any $\gamma \in  \pi_1(C^0,x_0) $ and any $\beta \in \pi_1(T,t_0)$, the following equation  holds  in $\pi_1(\familyCurve^0,\sigma(t_0))$:
\begin{equation}\label{mapetgf} \psi_*\mathrm{map}_{\mathcal{F}^\bullet_{(C,D^\bullet)}}(\beta)_*\gamma = \sigma_* \beta \cdot \psi_*\gamma   \cdot \sigma_* \beta^{-1} \, .
\end{equation}
Indeed, let $\gamma : [0,1]\to C^0$ be a closed path with endpoint $x_0$.
For any $s_0\in [0,1]$, we have a closed path $\gamma_{s_0}:=\gamma \times \{s_0\}$ in the product space $C^0\times [0,1]$. We also have a path $\theta : [0,1] \to C^0\times [0,1]\, ; \, s\mapsto (x_0,s)$. The path $\theta \cdot \gamma_1 \cdot \theta^{-1}$ is closed and homotopic to $\gamma_0$. Now let $\beta \in \pi_1(T,t_0)$ and let $\Phi : (C^0, x_0)\times [0,1] \stackrel{\sim}{\to} \beta^*(\familyCurve^0, \sigma(T))$ be a trivialization commuting with the natural projections to $[0,1]$. Define 
the homeomorphism $$\widetilde{\Phi}:=\Phi\circ ((\Phi_{1}^{-1}\circ \psi) \times \mathrm{id}_{[0,1]}): (C^0,x_0)\times [0,1] \stackrel{\sim}{\to} \beta^*(\familyCurve^0, \sigma(T))\, , $$
which is another trivialization, satisfying $\widetilde{\Phi}_1=\psi$ and $ \widetilde{\Phi}_0=\psi_*\mathrm{map}_{\mathcal{F}^\bullet_{(C,D^\bullet)}}(\beta).$ Since $\widetilde{\Phi}$ is continuous,
the closed paths $\widetilde{\Phi}_*\gamma_0$ and $\widetilde{\Phi}_*\theta\cdot \widetilde{\Phi}_*\gamma_1 \cdot \widetilde{\Phi}_*\theta^{-1}$ are homotopic in $ \beta^*(\familyCurve^0, \sigma(T))$. Considering the natural projection 
$\kappa:   \beta^*(\familyCurve^0, \sigma(T)) \to (\familyCurve^0, \sigma(T))$, we have $\kappa_*\widetilde{\Phi}_{*}\gamma_0=\widetilde{\Phi}_{0*}\gamma$ and $\kappa_*\widetilde{\Phi}_*\gamma_1=\widetilde{\Phi}_{1*}\gamma$. Since moreover $\kappa_*\widetilde{\Phi}_*\theta=\sigma_*\beta$, we have \eqref{mapetgf}.

  Since $\varphi$ is a homeomorphism, the induced map 
  $\varphi_*:\gf \to \pi_1(C^0,x_0)$ is an isomorphism. 
 The statement then follows from \eqref{mapettaut}, \eqref{mapetgf} and the Splitting Lemma \ref{LemSplit} . \end{proof}

\section{Necessary and sufficient conditions for algebraizability}  We shall see in Section \ref{SecFinPreuve} that Theorem  \ref{algebrization thm} is a corollary of the juxtaposition of \hyperlink{thA1}{Theorem A1}, showing that our algebraizability criterion for germs of universal isomonodromic deformations is necessary, and \hyperlink{thA2}{Theorem A2}, showing that it is also sufficient.
We have already established the main ingredients for the proofs of both theorems.
For \hyperlink{thA2}{Theorem A2}, we moreover need a representation-theoretical result developed in Section \ref{SecExt}.

\subsection{Extensions of representations}\label{SecExt}
We shall now consider the problem of extending a representation of the fundamental group of a fiber of a family of pointed curves to a representation for the whole family in  light of Lemma \ref{LemExt}  and Proposition \ref{corSplit}. We begin with the elementary case of non-semisimple rank 2 representations.

Let $A,B$ be groups. Consider a representation $\rho\in \Hom(A ,\triang)$, where $\triang$ is the group of invertible  upper triangular matrices of rank $2$. To such a representation, we may associate two other ones :
the \emph{scalar part}  $\rho_{\C^*} :\alpha \mapsto \rho(\alpha)_{2,2}$ and the \emph{affine part} $\rho_{\mathrm{Aff}}:=\rho_{\C^*}^{-1} \otimes \rho\label{aff/scalar}$. The latter takes values in $$\mathrm{Aff}(\C):=\{(a_{i,j})\in \triang~|~a_{2,2}=1\}$$ which is   isomorphic to the affine group of the complex line. 

\begin{lem} \label{lemred}
Let $\rho =\rho_{\C^*}\otimes \rho_{\mathrm{Aff}}$ and $\rho' =\rho'_{\C^*}\otimes \rho'_{\mathrm{Aff}}$ as above, and assume that they are not semisimple. We have 
$[\rho]=[\rho']  \in \Hom(A , \mathrm{GL}_{2}\C)/\mathrm{GL}_{2}\C$ if and only if $\rho_{\C^*}=\rho'_{\C^*}$ and 
$[\rho_{\mathrm{Aff}}]=[\rho'_{\mathrm{Aff}}]  \in \Hom(A , \Aff)/\Aff$. 
   \end{lem}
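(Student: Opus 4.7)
The proof is a routine but careful linear-algebra argument; the real content is that non-semisimplicity forces the conjugating matrix to be upper triangular.

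The easy direction is $(\Leftarrow)$. Assume $\rho_{\C^*} = \rho'_{\C^*}$ and that there exists $P \in \Aff(\C)$ with $P\rho_{\mathrm{Aff}}P^{-1} = \rho'_{\mathrm{Aff}}$. Since $\rho_{\C^*}$ is scalar-valued, it commutes with conjugation, so
\[
P\rho P^{-1} \;=\; P(\rho_{\C^*}\otimes \rho_{\mathrm{Aff}})P^{-1} \;=\; \rho_{\C^*}\otimes P\rho_{\mathrm{Aff}}P^{-1} \;=\; \rho'_{\C^*}\otimes \rho'_{\mathrm{Aff}} \;=\; \rho'.
\]

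For the nontrivial direction $(\Rightarrow)$, suppose $P \in \GLtw$ satisfies $P\rho P^{-1} = \rho'$. The key observation is that a non-semisimple upper triangular representation has a \emph{unique} $1$-dimensional invariant subspace, namely $\C e_1$: indeed, any second invariant line $L$ would yield a decomposition $\C^2 = \C e_1 \oplus L$ into invariant lines, making $\rho$ simultaneously diagonalizable, contrary to the hypothesis. The same holds for $\rho'$. Since $P$ carries the unique $\rho$-invariant line to the unique $\rho'$-invariant line, $P(\C e_1) = \C e_1$, so $P$ is upper triangular.

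Writing $P = \begin{pmatrix} p & q \\ 0 & s \end{pmatrix} = (sI)\cdot \begin{pmatrix} p/s & q/s \\ 0 & 1 \end{pmatrix}$ and noting that the scalar factor $sI$ lies in the center of $\GLtw$ and can therefore be dropped, I may assume $P \in \Aff(\C)$. A direct computation then shows that conjugation by an element of $\Aff(\C)$ preserves the $(2,2)$-entry, so $\rho_{\C^*} = \rho'_{\C^*}$. Finally, from $P\rho P^{-1} = \rho'$ and the decomposition $\rho = \rho_{\C^*}\otimes \rho_{\mathrm{Aff}}$, I cancel the (pointwise invertible) scalar factor $\rho_{\C^*} = \rho'_{\C^*}$ and obtain $P\rho_{\mathrm{Aff}}P^{-1} = \rho'_{\mathrm{Aff}}$ with $P \in \Aff(\C)$, i.e. $[\rho_{\mathrm{Aff}}] = [\rho'_{\mathrm{Aff}}]$ in $\Hom(A,\Aff(\C))/\Aff(\C)$.

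The only subtle step is the uniqueness of the invariant line, which is precisely where the non-semisimplicity hypothesis enters; everything else is formal.
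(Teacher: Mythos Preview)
Your proof is correct and follows essentially the same approach as the paper: both argue that non-semisimplicity forces uniqueness of the invariant line $\C e_1$, hence the conjugating matrix is upper triangular, and then normalize by the central scalar $m_{2,2}$ to land in $\Aff(\C)$. Your write-up is a bit more detailed in spelling out the easy direction and the cancellation of the scalar part, but the argument is the same.
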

\begin{proof} The ''if''-part is trivial. Assume $[\rho]=[\rho']$. 
Since they take values in $\triang$, both representations $\rho$ and $\rho'$ leave the line $\mathrm{span}(e_1)$ of $\C^2$ invariant. 
By non-semisimplicity, for each of the representations, there is no other  globally invariant line. Let $M=(m_{i,j})\in\mathrm{GL}_2\C$ conjugate both representations. Then $M$ must  leave $\mathrm{span}(e_1)$ invariant, \textit{i.e.} $M\in \triang$. As the scalars are central in $\mathrm{GL}_2\C$, the element $M/m_{2,2}\in \Aff$  conjugates both representations. In particular $\rho_{\C^*}=\rho'_{\C^*}$ and $M/m_{2,2}$ conjugates $\rho_{\mathrm{Aff}}$ and $\rho'_{\mathrm{Aff}}$.  \end{proof}

\begin{lem} \label{lemextrep} Let $\rho_A \in \Hom(A,\mathrm{GL}_{2}\C)$ be non-semisimple. Let $\theta \in \Hom(B, \mathrm{Aut}(A))$ such that for all $h\in \mathrm{Im}(\theta)$, we have $[\rho_A]=h\cdot [\rho_A]:=[\rho_A\circ h^{-1}]$. Then there exists a representation $\rho_B \in \Hom(B,\mathrm{GL}_{2}\C)$ such that 
$$\rho_A(\theta(\beta)^{-1}(\alpha))=\rho_B(\beta)^{-1} \rho_A(\alpha) \rho_B(\beta) \quad \forall \alpha \in A\, , \beta \in B\, .$$
    \end{lem}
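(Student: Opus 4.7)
My plan is to produce $\rho_B$ with values in the affine group $\Aff$, by exploiting the structure of reducible non-semisimple rank-$2$ representations. Up to a global conjugation I may assume $\rho_A$ is upper triangular with $\mathrm{span}(e_1)\subset\C^2$ as its unique invariant line, and decompose $\rho_A = \rho_{\C^*}\otimes\rho_{\mathrm{Aff}}$. Applied to each pair $\rho_A,\rho_A\circ\theta(\beta)^{-1}$ (both non-semisimple and upper triangular), Lemma \ref{lemred} yields that $\rho_{\C^*}$ is $\theta$-invariant and that each $\rho_{\mathrm{Aff}}\circ\theta(\beta)^{-1}$ is $\Aff$-conjugate to $\rho_{\mathrm{Aff}}$. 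Since $\rho_{\C^*}$ is scalar, the desired relation for $\rho_A$ reduces to the analogous relation for $\rho_{\mathrm{Aff}}$, and it suffices to construct a homomorphism $\rho_B:B\to\Aff$ satisfying $\rho_B(\beta)^{-1}\rho_{\mathrm{Aff}}(\alpha)\rho_B(\beta)=\rho_{\mathrm{Aff}}(\theta(\beta)^{-1}(\alpha))$.

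Identify $\Aff\cong\C\rtimes\C^*$ by sending $(u,\lambda)$ to the matrix with diagonal $(\lambda,1)$ and upper-right entry $u$, and write $\rho_{\mathrm{Aff}}(\alpha)=(u(\alpha),\lambda(\alpha))$, so that $\lambda$ is a character $A\to\C^*$ and $u$ is a $1$-cocycle for the $\lambda$-action. A direct calculation yields $(b,a)(u,\lambda)(b,a)^{-1}=((1-\lambda)b+au,\lambda)$, so the problem becomes: find a system $(b_\beta,a_\beta)_{\beta\in B}$ satisfying $u\circ\theta(\beta)^{-1}=a_\beta u+(1-\lambda)b_\beta$ and compatible with the group law on $B$. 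The ambiguity at each $\beta$ is the centralizer $Z_\Aff:=C_\Aff(\rho_{\mathrm{Aff}}(A))$.

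I now split according to whether $\lambda\equiv 1$. When $\lambda\equiv 1$, the cocycle $u:A\to\C$ is a nontrivial additive homomorphism (nontriviality being equivalent to non-semisimplicity), and the relation simplifies to $u\circ\theta(\beta)^{-1}=a_\beta u$; this determines $a_\beta$ uniquely, while $b_\beta$ is free, and setting $b_\beta:=0$, the matrix $\rho_B(\beta):=\mathrm{diag}(a_\beta,1)$ is directly seen to be a homomorphism since $\theta$ is multiplicative. When $\lambda\not\equiv 1$, I claim $Z_\Aff$ is trivial: for $(b,a)\in Z_\Aff$ the identity $(a-1)u(\alpha)=(\lambda(\alpha)-1)b$ must hold for every $\alpha$; picking $\alpha_0$ with $\lambda(\alpha_0)\neq 1$ and supposing $a\neq 1$ would yield $u(\alpha)=c(\lambda(\alpha)-1)$ with $c=u(\alpha_0)/(\lambda(\alpha_0)-1)$, meaning $u$ is a $1$-coboundary and $\rho_{\mathrm{Aff}}$ is diagonalizable, contradicting non-semisimplicity; hence $a=1$ and then $b=0$. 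This triviality, the key step where non-semisimplicity plays its essential role, forces uniqueness of $M_\beta\in\Aff$; since $(M_{\beta_1}M_{\beta_2})^{-1}\rho_{\mathrm{Aff}}(\alpha)(M_{\beta_1}M_{\beta_2})=\rho_{\mathrm{Aff}}(\theta(\beta_1\beta_2)^{-1}(\alpha))$, uniqueness gives $M_{\beta_1}M_{\beta_2}=M_{\beta_1\beta_2}$, and $\rho_B(\beta):=M_\beta$ is the desired homomorphism.
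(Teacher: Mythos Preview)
Your proof is correct and follows essentially the same route as the paper's. Your dichotomy ``$\lambda\equiv 1$'' versus ``$\lambda\not\equiv 1$'' for the linear part of $\rho_{\mathrm{Aff}}$ is exactly the paper's split into ``$\mathrm{Im}(\rho_A)$ abelian'' versus ``non-abelian''; in the non-abelian case both proofs use triviality of the $\Aff$-centralizer (you make the coboundary argument explicit where the paper merely asserts it), and in the abelian case both normalize the conjugators to be diagonal and check multiplicativity directly.
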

\begin{proof} We may assume that $\rho_A$ takes values in $\triang$. By assumption, for each $\beta \in B$, there exists a matrix $M_\beta\in\mathrm{GL}_{2}\C$ such that $\rho_A(\theta(\beta)^{-1}(\bullet))=M_\beta^{-1} \rho_A(\bullet)M_\beta$. By Lemma \ref{lemred}, we may assume $M_\beta\in \Aff$. If $\mathrm{Im}(\rho_A)\subset \triang$ is nonabelian, then it has trivial centralizer and the matrices $M_\beta\in \Aff$ are uniquely defined. Otherwise, we have $ \mathrm{Im}(\rho_A)\subset \left\{ \lambda \left( \begin{smallmatrix} 1 & \tau\\0&1\end{smallmatrix}\right)~\middle|~\lambda \in \C^*\, , \tau \in \C\right\}$ and the matrices $M_\beta$ are uniquely defined if we impose 
$M_\beta \in  \left\{  \left( \begin{smallmatrix} \mu & 0 \\0&1\end{smallmatrix}\right)~\middle|~\mu \in \C^*\right\}$. It is now straightforward to check that given these choices, the well-defined map $\beta \mapsto M_\beta$ is a morphism of groups.  \end{proof}

For a similar result for semisimple representations $\rho_A$ (of arbitrary rank), the group $B$, which in our case will be the fundamental group of a parameter space, might have to be modified, in order to take into account the non-unicity of the matrices $M_\beta$ due to possible permutations of irreducible components. 

\begin{prop}\label{propextrep}
Let $\rho_A \in \Hom(A, \GL)$ be semisimple. Let $  (T,t_0)$ be a smooth connected quasi-projective variety, and let $\theta\in \Hom(\pi_1(T,t_0),{\mathrm{Aut}(A)})$  such that $H:=\mathrm{Im}(\theta)$  stabilizes $[\rho_A]$.  Then there is an \'etale base change $p : (T',t_0') \to (T,t_0)$ and 
    a representation $\rho_B \in \Hom(\pi_1(T',t_0'), \GL)$ such that
$$  \rho_A (\theta(p_*\beta)^{-1} (\alpha) )= \rho_B(\beta)^{-1} \cdot \rho_A(\alpha) \cdot \rho_B(\beta) \quad \forall  \alpha\in A\, , \beta \in \pi_1(T',t_0')\, . $$
 \end{prop}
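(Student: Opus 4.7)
The plan is to reduce the extension problem to a projective lifting question and then kill the resulting cohomological obstruction by a suitable finite \'etale base change. The argument proceeds in three steps.

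\textbf{Step 1: Trivializing the permutation of isotypic components.} Since $\rho_A$ is semisimple, decompose $\C^r \cong \bigoplus_{i=1}^k V_i \otimes W_i$, where the $V_i$ carry pairwise non-isomorphic irreducible subrepresentations $\rho_i$ of $A$ and the $W_i \cong \C^{m_i}$ are the multiplicity spaces. By Schur's lemma, $Z_\rho := Z_{\GL}(\rho_A(A)) = \prod_{i=1}^k \mathrm{GL}(W_i)$. Any $\phi\in\mathrm{Aut}(A)$ with $[\rho_A\circ\phi^{-1}]=[\rho_A]$ must permute the isomorphism classes $\{[\rho_i]\}$ (preserving dimensions and multiplicities), so composing with $\theta$ yields a homomorphism $\pi_1(T,t_0)\to \mathfrak{S}_k$ with finite image. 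Let $p_1:(T_1,t_0^{(1)})\to(T,t_0)$ be the corresponding finite \'etale cover. After replacing $T$ by $T_1$, we may assume that $\theta(\beta)$ preserves each isotypic component for every $\beta$.

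\textbf{Step 2: From intertwiners to a projective representation.} Under this assumption, any intertwiner $M_\beta\in\GL$ satisfying $M_\beta^{-1}\rho_A M_\beta=\rho_A\circ\theta(\beta)^{-1}$ preserves each $V_i\otimes W_i$ and, by Schur's lemma applied to the centralizer of $\rho_i\otimes\mathrm{id}_{W_i}$, decomposes as $M_\beta|_{V_i\otimes W_i}=A_i^\beta\otimes B_i^\beta$, where $A_i^\beta\in\mathrm{GL}(V_i)$ satisfies $(A_i^\beta)^{-1}\rho_i A_i^\beta=\rho_i\circ\theta(\beta)^{-1}$ (and is unique up to a scalar), while $B_i^\beta\in\mathrm{GL}(W_i)$ is arbitrary. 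Quotienting out both ambiguities on the $A_i^\beta$, the assignment
\[
\tilde\theta:\pi_1(T,t_0)\longrightarrow \prod_{i=1}^k\mathrm{PGL}(V_i),\qquad \beta\mapsto([A_1^\beta],\ldots,[A_k^\beta])
\]
is a well-defined group homomorphism. A homomorphic lift $\hat\theta=(\hat\theta_i)$ of $\tilde\theta\circ p_*$ to $\prod_i\mathrm{GL}(V_i)$, defined on $\pi_1(T',t_0')$ for some finite \'etale cover $p:T'\to T$, would then yield $\rho_B(\beta):=\bigoplus_i\hat\theta_i(\beta)\otimes\mathrm{id}_{W_i}$, and the required conjugation identity follows by construction from the defining property of the $A_i^\beta$.

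\textbf{Step 3: Lifting via a further finite \'etale cover.} For each $i$, pulling back the central extension $1\to\mu_{d_i}\to\mathrm{SL}(V_i)\to\mathrm{PGL}(V_i)\to 1$ (with $d_i=\dim V_i$) along $\tilde\theta_i$ produces a central extension
\[
1\longrightarrow\mu_{d_i}\longrightarrow E_i\longrightarrow\pi_1(T,t_0)\longrightarrow 1
\]
by the \emph{finite} group $\mu_{d_i}$, classified by a cohomology class in $H^2(\pi_1(T,t_0),\mu_{d_i})$ which, on the smooth quasi-projective variety $T$, corresponds to a $\mu_{d_i}$-gerbe whose obstruction class can be trivialized by a suitable finite \'etale cover. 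Taking the fiber product of the finite \'etale covers trivializing the $k$ extensions simultaneously yields a single finite \'etale $p:T'\to T$ on which each $E_i$ splits, producing the desired homomorphic lifts $\hat\theta_i:\pi_1(T',t_0')\to\mathrm{GL}(V_i)$, and hence the representation $\rho_B$.

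\textbf{Main obstacle.} The principal technical difficulty is Step 3: while the permutation ambiguity in Step 1 is manifestly finite, the scalar phase ambiguity producing $\tilde\theta$ in $\prod_i\mathrm{PGL}(V_i)$ requires a genuinely cohomological/gerbe-theoretic argument to be killed by a finite \'etale base change. This is precisely where the hypothesis that $T$ is smooth quasi-projective enters essentially, as it provides the algebraic covers needed to trivialize the relevant classes; an arbitrary topological space admitting the map $\theta$ would not suffice.
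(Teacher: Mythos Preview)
Your approach is essentially the paper's: pass to a finite cover so that each irreducible (for you, isotypic) summand is individually stabilized, obtain a well-defined projective representation into $\prod_i \mathrm{PGL}(V_i)$ via Schur's lemma, then lift it linearly after a further base change. The paper decomposes directly into irreducibles rather than isotypic components, a cosmetic difference; your treatment of the multiplicity spaces is correct and slightly cleaner.

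The one point that deserves comment is your Step~3. The paper does not argue this from scratch: it invokes a Lifting Theorem (Th.~3.1 of \cite{MR3300949}) which, for a projective representation of the fundamental group of a smooth quasi-projective variety, produces a Zariski open neighbourhood of $t_0$ and a finite map \'etale near the basepoint on which the representation lifts to $\GL$. Your sentence ``corresponds to a $\mu_{d_i}$-gerbe whose obstruction class can be trivialized by a suitable finite \'etale cover'' is precisely the content of that theorem, and you have correctly located it as the main obstacle, but as written it is an assertion rather than an argument: the vanishing of a class in $H^2(\pi_1(T),\mu_d)$ on a finite-index subgroup is not automatic for arbitrary finitely presented groups and genuinely uses the quasi-projective hypothesis. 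You should either cite the lifting result explicitly or supply the gerbe/Brauer-group argument. Note also that the cited theorem yields only an \emph{\'etale base change} in the paper's sense (Zariski shrinking followed by a finite map \'etale near $t_0'$), not necessarily a global finite \'etale cover; this weaker conclusion is all the Proposition asserts, so your insistence on a finite \'etale cover throughout is slightly stronger than needed and than what the reference directly provides.
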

\begin{proof}
  Let $\rho_A = \bigoplus_{i\in I} \rho^i_A $ be a decomposition such that each $\rho^i_A$ is irreducible. The subgroup $$\bigcap_{i\in I} \mathrm{Stab}_{{\mathrm{Aut}(A)}}[\rho^i_A] \subset \mathrm{Stab}_{{\mathrm{Aut}(A)}}[\rho_A]\, , $$stabilizing the conjugacy class $[\rho^i_A]$ for each $i\in I$, is of finite index  (see for example \cite[Lemma $3$]{cousinisom}).  Hence the subgroup $\widetilde{H}:=H \cap_{i\in I} \mathrm{Stab}_{{\mathrm{Aut}(A)}}[\rho^i_A]$ is of finite index in $H$. Consider now the finite connected 
   unramified covering $\tilde{p}: (\widetilde{T},\tilde{t}_0)\to (T,t_0)$ characterized by $\tilde{p}_*\pi_1(\widetilde{T},\tilde{t}_0)=\theta^{-1}(\widetilde{H})$. Note that $\tilde{p}$ induces a structure of   smooth quasi-projective variety on $\widetilde{T}$. Since $\widetilde{H}$ stabilizes $[\rho^i_A]$, for every $h\in \widetilde{H}$ and every $i\in I$, there is a matrix $M^i_{h}\in \mathrm{GL}_{r_i}\C$ such that 
\begin{equation} \label{eqtrouverrhoB} (M^i_h)^{-1} \cdot \rho^i_A \cdot  M_h^i= [h]\cdot \rho^i_A\, .\end{equation}
Given $i$ and $h$, the choice of $M_h^i$ is unique up to an element of the centralizer of $\rho^i_A$. Since $\rho^i_A$ is irreducible, this centralizer is given by the set of scalar matrices. 
Denote by $\overline{M_h^i}\in  \mathrm{PGL}_{r_i}\C$ the projectivization of   $ {M_h^i}\in  \mathrm{GL}_{r_i}\C$.  Then $\overline{\rho_B}^i: \beta \mapsto \overline{M_{\theta_*\tilde{p}_*\beta}^i}$ is a well-defined element of $\Hom(\pi_1(\widetilde{T},\tilde{t}_0), \mathrm{PGL}_{r_i}\C)$. According to the Lifting Theorem \cite[Th. $3.1$]{MR3300949}, there exists a Zariski closed subset $\widetilde{Z}$ of $\widetilde{T}$ not containing $\tilde{t}_0$, a finite morphism of smooth quasi-projective varieties  $$p' : (\widetilde{T}',t_0') \to ( \widetilde{T}\setminus \widetilde{Z}, \tilde{t}_0)\, , $$ \'etale in a neighborhood $T'$ of $t_0'$, and a representation $\rho^i_B \in  \Hom(\pi_1(\widetilde{T}',t_0'), \mathrm{GL}_{r_i}\C)$  whose projectivization is ${p'}^*\overline{\rho_B}^i$. For a convenient choice of $p'$, this property is satisfied for all $i\in I$ at once. We obtain a representation $\rho_B:= \bigoplus_{i\in I} \rho^i_B  $ in $\Hom(\pi_1(T',t_0'), \mathrm{GL}_{r_i}\C)$ satisfying the required properties with respect to $p:=\tilde{p}\circ p'|_{T'}$
.  
 \end{proof}

 \subsection{Finiteness and algebraization}\label{SecFiniteness}\label{SecFinPreuve}

\begin{thmA1}\hypertarget{thA1}{}
Let $(C,D)$ be a stable $n$-pointed genus  $g$ curve as in Section \ref{SecSetup}. 
Let $\varphi : (\Sigma_g, Y^n) \stackrel{\sim}{\to} (C,D)$ be an orientation preserving homeomorphism.   
Let $(\initialBundle, \nabla_0)$ be an algebraic logarithmic connection over $C$ with polar divisor $D$ and denote by $[\rho_{\nabla_0}]\in \chign{\GL}$ its monodromy with respect to $\varphi$.  
Let $\mathcal{I}_{(C,\initialBundle, \nabla_0)}=(\mathcal{F}_{(C,D)}, \familyBundle, \nabla, \Psi)$ be an algebraic isomonodromic deformation of $(C,\initialBundle, \nabla_0)$ with parameter space $T$ as in Section \ref{SecDefDef}. Assume that 
\begin{itemize}
\item 
the classifying map $\class(\mathcal{F}) : T\to \Mgn$  is dominant (see Section \ref{Sec univ fam}).
\end{itemize}
Then 
\begin{itemize}
\item  the $\Gamma_{g,n}$-orbit of $[\rho_{\nabla_0}]$ in $\chign{\GL}$ is finite.
\end{itemize}
 \end{thmA1}
  
\begin{proof} The orbit $\Gamma_{g,n}\cdot [\rho_{\nabla_0}]$ does not depend on the choice of $\varphi$. Moreover, it is canonically identified, for any $t_1\in T$, with the orbit $\Gamma_{g,n}\cdot [\rho_{t_1}]$ of the monodromy of the connection $(\familyBundle, \nabla)$ restricted to the fiber over $t_1$ of the family $\mathcal{F}$. Since $\class(\mathcal{F} )$ is dominant we may assume, without loss of generality, that
 $\star:=\class(\mathcal{F} )(t_0)\in  \Mgn\setminus \Bgn$. Moreover, up to restricting $\mathcal{I}_{(C,\initialBundle, \nabla_0)}$ to a Zariski open neighborhood $\Delta$ of $t_0$ in $T$, we may assume that  
$\class(\mathcal{F} )(T)\, \cap\,   \Bgn\, = \emptyset.$ Notice that this property, as well as the assumption of $\class(\mathcal{F} )$ being dominant is not altered by finite covers and further excision of strict subvarieties not containing $t_0$. 
 According to Lemma~\ref{lemsection}, up to such a manipulation, we may assume that $\mathcal{F}_{(C,D)}=(\kappa : \familyCurve\to T, \mathcal{D}, t_0, \psi)$ admits a section $\sigma : T\to \familyCurve$ of $\kappa$ with values in $\familyCurve^0:=\familyCurve\setminus \mathcal{D}$   such that $\sigma(t_0)=\psi\circ \varphi(y_0)$. Denote by $\rho$ a representative of the monodromy representation of $(\familyBundle, \nabla)$ with respect to the identity such that the restriction of 
$\rho$ to the subgroup $(\psi\circ \varphi)_*\gf$ of $\pi_1(\familyCurve^0, \sigma(t_0))$, given by the inclusion of the central fiber, is identical to $(\psi\circ \varphi)_*\rho_{\nabla_0}$. Such a representative exists, as implies for example Theorem \ref{thmRH}. 
According to Proposition \ref{corSplit}, we  then have a semi-direct product decomposition  
 $$ \pi_1(\familyCurve^0,\sigma(t_0)) =(\psi\circ \varphi)_*\gf\rtimes_{\eta} \sigma_*\pi_1(T,t_0)\, ,$$
 where we have two different expressions for its structure morphism $\eta$, 
 proving that $${H}:= \tautgnplusun({\clF^\bullet}_*\pi_1(T,t_0))\subset \Gamma_{g,n+1} 
\, $$ acts on $\rho_{\nabla_0}\in \Hom(\gf, \GL)$ by conjugation
  .
 More precisely, for all $\alpha\in \gf$ and $[h]= \tautgnplusun({\clF^\bullet}_*\, \beta)\, \in {H}$, we have
 $$\rho_{\nabla_0}\left(\mathfrak{a}(h)(\alpha)\right)=\rho(\sigma_*\beta)\cdot \rho_{\nabla_0}(\alpha)\cdot \rho(\sigma_*\beta^{-1})$$
and in particular $[h^{-1}]\cdot [\rho_{\nabla_0} ]= [\rho_{\nabla_0} ].$ 
In other words, $H$ is a subgroup of the stabilizer of $[\rho_{\nabla_0}]$ in $\Gamma_{g, n+1}$. 
  By definition of the mapping class group action, we then have  $$\pi(H)\subset \mathrm{Stab}_{\Gamma_{g,n}}[\rho_{\nabla_0}]\, ,$$ where $\pi :\Gamma_{g, n+1}\to \MCG$ is the projection  forgetting the marking $y_0$.  Since the size of the orbit $\Gamma_{g,n}\cdot [\rho_{\nabla_0}]$ equals the index of $ \mathrm{Stab}_{\Gamma_{g,n}}[\rho_{\nabla_0}]$ in $\Gamma_{g,n}$, it now suffices to prove that $\pi(H)$ has finite index in $\Gamma_{g,n}$. 
Denote by $q:\MCG\to \MCG/K_{g,n}$ the quotient by the normal subgroup $K_{g,n}$, which, by Lemma \ref{lemme Kgn}, has order at most $2$. Hence for the indices, we have $$
[\MCG : \pi(H)]  \leq 2\cdot [\MCG/K_{g,n} : q(\pi(H))]\, .$$
We have a commutative diagram 
$$\xymatrix{\Gamma_{g,n+1}\ar[d]_{\pi}&&&& \pi_1(T,t_0)\ar[llll]_{\tautgnplusun\, \circ \, {\clF^\bullet}_*}\ar[d]^{\tautgn \, \circ\,  \clF_*}\\
\MCG \ar[rrrr]^{q}&&&& \MCG/K_{g,n}\, ,
 }$$
 where $\clF: T\to \Mgn \setminus \Bgn$ denotes the corestriction of $\class(\mathcal{F})$. 
On the other hand, by  the dominance assumption and  \cite[Lemma $4.19$]{MR1841091},  the subgroup   $\clF_*\, \pi_1(T,t_0)$ of $\pi_1\left(\Mgn \setminus \Bgn,\star \right)$ is of finite index.
In particular, since the tautological morphism $\tautgn  : \pi_1\left(\Mgn \setminus \Bgn,\star\right) \twoheadrightarrow \MCG/K_{g,n}$ is onto, the subgroup $q(\pi(H))=\tautgn (\clF_*\pi_1(T,t_0))$ of $\MCG /K_{g,n}$ has finite index as well.  
 \end{proof}


\begin{thmA2}\hypertarget{thA2}{}
Let $\mathcal{F}_{(C,D)}=(\kappa : \familyCurve\to T, \mathcal{D}, t_0, \psi)$ be an algebraic family of stable $n$-pointed genus $g$ curves with central fiber $(C,D)$ as in Section \ref{SecSetup}. 
 Let $(\initialBundle, \nabla_0)$ be an algebraic logarithmic connection over $C$ with polar divisor $D$ and denote by $[\rho_{\nabla_0}]\in \chign{\GL}$ its monodromy  with respect to  an orientation preserving homeomorphism $\varphi : (\Sigma_g, Y^n) \stackrel{\sim}{\to} (C,D)$.  
Assume that 
\begin{itemize}
\item $(\initialBundle, \nabla_0)$ is mild,    \vspace{.1cm}
\item $r=2$ or $\rho_{\nabla_0}$ is semisimple, and  \vspace{.1cm}
\item the $\Gamma_{g,n}$-orbit of $[\rho_{\nabla_0}]$ in $\chign{\GL}$ is finite. 
\end{itemize}
Then there are 
\begin{itemize}
\item an \'etale base change $p: (T',t_0')\to (T,t_0)$ and  \vspace{.1cm}
\item a 
flat algebraic logarithmic connection $(\familyBundle,\nabla)$ over $\familyCurve':=p^*\familyCurve$ with polar divisor $p^*\mathcal{D}$, 
\end{itemize}
 such that $\psi^*(\familyBundle,\nabla)|_{\familyCurve'_{t_0'}}$ is isomorphic to $(\initialBundle, \nabla_0)$. 
\end{thmA2}

\begin{proof} Since $(C,D)$ is stable by assumption, it only admits a finite number of automorphisms. Let $x_0\in C\setminus D$ be a point fixed by no automorphism other than the identity. Up to isotopy, we may assume $\varphi(y_0)=x_0$. Let $\mathbf{x}^\bullet$ be the labelling of $D^\bullet=D+x_0$ induced by $\varphi$.
By construction, we have $\star:=[C,\mathbf{x}^\bullet]\in \Mgnplusun \setminus \Bgnplusun$. 
Up to an  \'etale base change, we may assume, by Lemma \ref{lemsection}, that there is a section   $\sigma : T\to \familyCurve$ of $\kappa$ with values in $\familyCurve^0:=\familyCurve\setminus \mathcal{D}$ and such that $\sigma(t_0)=\psi(x_0)$.   With the notation of Section \ref{SecNotBul}, we may consider the family of $n+1$-pointed genus $g$ curves  $\mathcal{F}^\bullet_{(C,D^\bullet)}=(\kappa : \familyCurve\to T, \mathcal{D}+\sigma(T), t_0, \psi)$.  
According to Proposition \ref{corSplit}, we have a semi-direct product decomposition  
 $ \pi_1(\familyCurve\setminus \mathcal{D},\sigma(t_0)) =(\psi\circ \varphi)_*\gf\rtimes_{\eta} \sigma_*\pi_1(T,t_0),$
where
 $$\eta(\sigma_*\beta)((\psi\circ\varphi )_*\alpha) =\sigma_*\beta \cdot (\psi\circ \varphi)_*\alpha \cdot \sigma_*\beta^{-1} = (\psi\circ \varphi)_*\mathfrak{a}(\theta_*\beta)(\alpha)\, $$  
 and $\theta : = \tautgnplusun \circ {\clF^\bullet}_* : \pi_1(T,t_0) \to \Gamma_{g,n+1}.$

Since the $\Gamma_{g,n+1}$-orbit of $[\rho_{\nabla_0}]$ in $\chign{\GL}$ is finite,    the stabilizer $$H:=\mathrm{Stab}_{\Gamma_{g,n+1}}[\rho_{\nabla_0}]$$ of the conjugacy class of $\rho_{\nabla_0}$ under the action of $\Gamma_{g,n+1}$  has finite index in $\Gamma_{g,n+1}$. Since the  tautological morphism is onto, the subgroup 
$\tautgnplusun^{-1}(H)$ of $\pi_1(\Mgnplusun \setminus \Bgnplusun, \star)$ then has also finite index. In particular, there is a finite connected \'etale cover
$q : (U,u_0)\to (\Mgnplusun \setminus \Bgnplusun, \star)$ such that 
$\pi_1(U,u_0)= \tautgnplusun^{-1}(H)$. 
Now consider the  fibered product 
\[\xymatrix{(T',t_0')\ar[r]^{p}\ar[d]&(T,t_0)\ar[d]^{\mathrm{class}(\mathcal{F}^\bullet)}\\(U,u_0)\ar[r]^{\hskip-15pt q}&(\Mgnplusun,\star).}
\] 
We denote the pullback family of curves by $\mathcal{F}_{(C,D^\bullet)}'=(\kappa' : \familyCurve'\to T', \mathcal{D}' +\sigma'(T'), t'_0, \psi'):=p^*\mathcal{F}^\bullet_{(C,D^\bullet)}$. We further denote  $\clF'=\clF^\bullet \circ p$, which is the corestriction of $\class (\mathcal{F}')$
.
By construction, the morphism $\theta' : =\theta\circ p= \tautgnplusun \circ {\clF'}_* : \pi_1(T',t_0') \to \Gamma_{g,n+1}$
takes values in~$H$. 

Again up to an \'etale base change of $(T',t_0')$, by Proposition  \ref{propextrep} and Lemma \ref{lemextrep}, there is a representation 
$\rho_B\in \Hom(\pi_1(T',t_0'),\GL)$ such that  for all $\beta \in \pi_1(T',t_0')$,  $\alpha \in \gf$, we have
$$\left([\theta'_* \beta]^{-1}\cdot \rho_{\nabla_0}\right) (\alpha ) = \rho_B(\beta) \cdot \rho_{\nabla_0}(\alpha) \cdot \rho_B(\beta^{-1}) \, . $$
Since by definition $\left([\theta'_*\beta]^{-1}\cdot \rho_{\nabla_0}\right) (\alpha )= \rho_{\nabla_0}(\mathfrak{a}(\theta'_*\beta)(\alpha))$, we obtain a well-defined representation 
$$\rho : \left\{\begin{array}{rcl} \pi_1(\familyCurve'\setminus \mathcal{D}',\sigma'(t_0'))  & \to & \GL\vspace{.1cm}\\
(\psi'\circ\varphi )_*\alpha \cdot \sigma'_*\beta & \mapsto & \rho_{\nabla_0}(\alpha) \cdot \rho_B(\beta)\end{array} \right.\, $$
(see Lemma \ref{LemExt}) with respect to the semi-direct product decomposition $ \pi_1(\familyCurve'\setminus \mathcal{D}',\sigma'(t_0')) =(\psi'\circ \varphi)_*\gf\rtimes_{\eta} \sigma'_*\pi_1(T',t_0')$. By construction, $\rho$ extends $ \rho_{\nabla_0}$. 
We  conclude by the logarithmic Riemann-Hilbert correspondence (see Theorem \ref{thmRH}). 
 \end{proof}

 \begin{proof}[Proof of Theorem  \ref{algebrization thm}]

 Let us first prove the implication  $(\ref{algitem alg})\Rightarrow (\ref{algitem finite})$. 
 Let $\mathcal{I}^{\mathrm{univ, alg}}_{(C,\initialBundle, \nabla_0)}=$ $(\mathcal{F}^{\mathrm{Kur}}_{(C,D)}, \familyBundle, $ $\nabla, \Psi)$ be an algebraic universal isomonodromic deformation of $(C,\initialBundle, \nabla_0)$ as in Section \ref{SecDefDef}. Then by definition, the family $\mathcal{F}^{\mathrm{Kur}}_{(C,D)}$ is Kuranishi. In particular,  the classifying map
  $\class(\mathcal{F}^{\mathrm{Kur}} ) : T\to \Mgn$  is dominant. Then by \hyperlink{thA1}{Theorem A1}, the $\Gamma_{g,n}$-orbit of $[\rho_{\nabla_0}]$ in $\chign{\GL}$ is finite. 

  Let us now prove the implication  $(\ref{algitem finite})\Rightarrow (\ref{algitem alg})$. Let $\mathcal{F}^{\mathrm{Kur}}_{(C,D)}=(\kappa : \familyCurve\to T, \mathcal{D}, t_0, \psi)$ be any algebraic Kuranishi family with central fiber $(C,D)$ as in Section \ref{SecSetup}. Note that such a family exists since $(C,D)$ is stable, and that it remains Kuranishi after pullback \textit{via} an \'etale base change. Up to such a manipulation, according to \hyperlink{thA2}{Theorem A2}, the family $\mathcal{F}^{\mathrm{Kur}}_{(C,D)}$ can be endowed with a flat algebraic logarithmic connection $(\familyBundle,\nabla)$ over $\familyCurve$ with polar divisor $\mathcal{D}$ such that there is an isomorphism
  $\Psi : (\initialBundle,\nabla_0)\to (\familyBundle, \nabla)|_{\familyCurve_{t_0}}$ commuting with $\psi$ via the natural projections to $(C,D)$ and $(\familyCurve_{t_0}, \mathcal{D}_{{t_0}})$ respectively. Now $\mathcal{I}^{\mathrm{univ, alg}}_{(C,\initialBundle, \nabla_0)}:=(\mathcal{F}^{\mathrm{Kur}}_{(C,D)}, \familyBundle, \nabla, \Psi)$ defines an algebraic universal isomonodromic deformation of $(C,\initialBundle,\nabla_0)$ (see Section \ref{SecDefDef}).
\end{proof}


 \part{Dynamics}\label{partDyn}
  
  \section{Effective description of the mapping class group action}\label{Sec explicit}
In this section we describe the action of $\fMCG$ on $\gf$ in terms of specified generators for both groups.
\subsection{Presentation of the fundamental group}
To give an effective description of $\gf$ and how $\fMCG$ acts, we will assume that $\Sigma_g$ is the subsurface of genus $g$ of $\R^3$ depicted in Figure $\ref{figgens 1}$. On this surface we also depicted, in gray, an embedded closed disk $\bar{\Delta}\subset \Sigma_g$, we will denote $\Delta$ its interior. We fix $n$ and we consider a subset $Y^n=\{y_1, \ldots, y_n\} \subset \Delta$ of cardinality $n$, as well as a point $y_{0}\in \bar{\Delta}\setminus \Delta$. 
We have
$$\pi_1(\Sigma_g\setminus \Delta, y_0)=\left\langle \alpha_1, \beta_1, \ldots, \alpha_g, \beta_g, \delta ~\middle|~ [\alpha_1, \beta_1]\cdots[\alpha_g, \beta_g]=\delta^{-1} \right\rangle ,$$
where the mentioned generators correspond to the loops in Figure $\ref{figgens 1}$. Note that $\delta$ runs over the boundary of $\bar{\Delta}$.
  \begin{figure}[htbp] 
\begin{centering} {\large \scalebox{.4}{\input{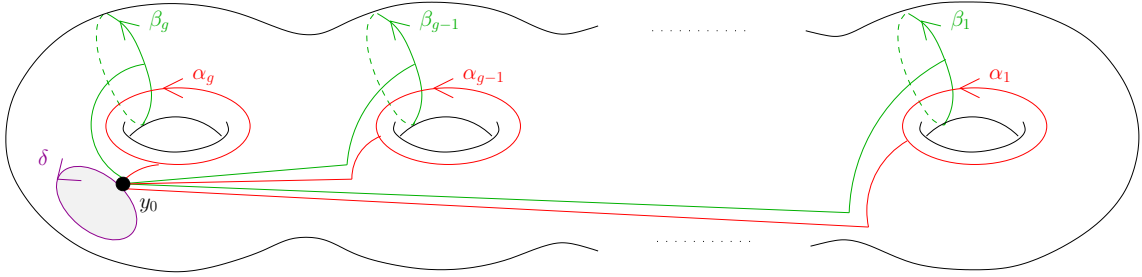}}}\\
 \end{centering}
 \caption{Preferred elements of the fundamental group, I \label{figgens 1}}
 \end{figure}
 
The loops in  Figure $\ref{figgens 2}$ correspond to the following presentation.
$$\pi_1(\bar{\Delta}\setminus Y^n, y_{0})=\left\langle \gamma_1, \ldots, \gamma_n, \delta ~\middle|~ \gamma_1\cdots\gamma_n=\delta \right\rangle .$$
 \vspace{-1cm}\\ \begin{figure}[htbp]
  \begin{centering} {\large \scalebox{.4}{\input{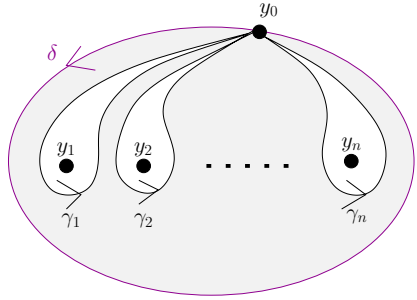}}}\\
 \end{centering}\caption{Preferred elements of the fundamental group, II\label{figgens 2}}
 \end{figure}
 
 By the Van Kampen theorem, we have
 \[\begin{array}{rcl}\gf&=&\pi_1(\Sigma\setminus \Delta, y_{0})*_\delta \pi_1(\bar{\Delta}\setminus Y^n, y_{0})\vspace{.2cm}\\
&=&\left\langle\alpha_1, \beta_1, \ldots, \alpha_g, \beta_g,\gamma_1,\ldots,\gamma_n ~\vert~ \gamma_1\cdots\gamma_n=\left([\alpha_1, \beta_1]\cdots[\alpha_g, \beta_g]\right)^{-1}\right\rangle.
 \end{array}
 \]
 In the sequel, writing ``the generators" of $\gf$, we will refer to the above $$(\alpha_i)_{i\in \intint{1}{g} }\, , \, (\beta_i)_{i\in \intint{1}{g} }\, , \, (\gamma_j)_{j\in \intint{1}{n} }\, .$$
 
 \subsection{Mapping class group generators}

We define $\MCGg$ to be the mapping class group of orientation preserving homeomorphisms of $\Sigma\setminus \Delta$ that restrict to the identity on $\partial \Delta$. Continuating such homeomorphisms by the identity on $\Delta$, we get a  morphism 
$$\varphi_{g}: \MCGg \to \fMCGplus\, .$$
After Lickorish \cite{MR0171269} (see also \cite[Th. $4.13$]{MR2850125}), the group $\MCGg$ is generated by the (right) Dehn-twists along the  loops  $\tau_1, \ldots,\tau_{3g-1}$ represented in Figure \ref{DehntwistPic}. \\

 \begin{figure}[htbp]
  \begin{centering} {\large \scalebox{.4}{\input{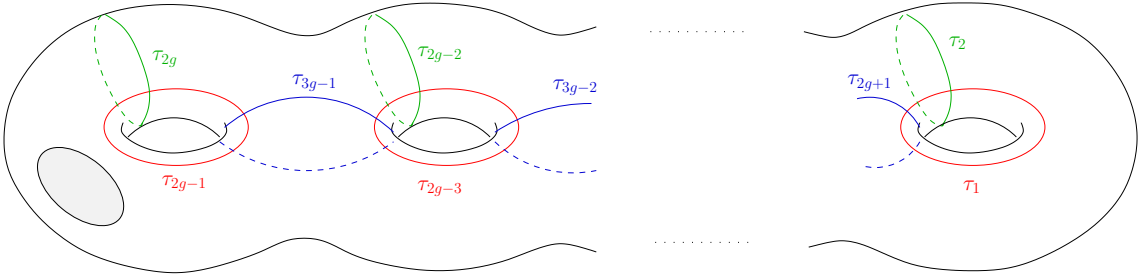}}} \end{centering}  
  \caption{Dehn-twists}\label{DehntwistPic}
 \end{figure}
 
 A right Dehn twist acts on paths which cross the corresponding Dehn curve as depicted in Figure \ref{FigDehnaction}. This action can be summarized as ``a path crossing the Dehn curve has to turn right''. A left Dehn twist is the inverse of a right Dehn twist.
\begin{figure}[htbp]
  \begin{centering} {\large \scalebox{.4}{\input{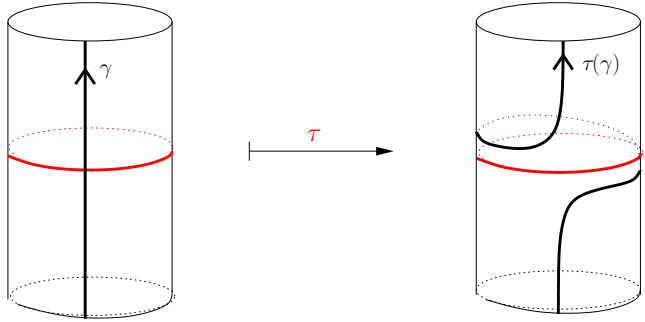}}} \end{centering}  
  \caption{Dehn-twist action }\label{FigDehnaction}
 \end{figure}
 
One can now easily check the following.   
\begin{lem}[Dehn-twists]
The action of the Dehn twists above on the fundamental group $\pi_1(\Sigma_g\setminus \Delta, y_{0})$ is given in Table~$\ref{DehnAction1}$, where the right hand side lists only the effect on those generators that are not fixed by the considered twist. Here for $\tau_{2k-1}$ we give the formula for the left Dehn twist. The other generators all correspond to right Dehn twists. Moreover, for $k\in\intint{1}{g-1}$, the element $\Theta_k$ described  in Table~$\ref{DehnAction1}$ is fixed by $\tau_{2g+k}$.
\begin{table}[htbp]
$\begin{array}{|ll| lcl |}
\hline
\tau_{2k}\,, &k\in\intint{1}{g} & \alpha_k & \mapsto& \alpha_k \beta_k\\
\hline
\tau_{2k-1}\,, & k\in\intint{1}{g} & \beta_k & \mapsto& \beta_k\alpha_k\\
\hline
\tau_{2g+k}\,,& k\in \intint{1}{g-1}&\alpha_{k+1}&\mapsto& \Theta_k^{-1} \alpha_{k+1}  \\&&  \alpha_{k} &\mapsto& \alpha_{k}\Theta_k\\
&&\beta_{k}&\mapsto& \Theta_k^{-1}\beta_{k}\Theta_k \\
&&\textrm{where }\Theta_k&=&\alpha_{k+1}\beta_{k+1}^{-1}\alpha_{k+1}^{-1}\beta_{k}\\
\hline
\end{array}
$\\$ $
\\
\caption{\label{DehnAction1}}
\end{table}
 \end{lem}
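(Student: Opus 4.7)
The verification is essentially a pictorial/combinatorial one: for each Dehn-twist $\tau_j$, I would enumerate which of the generators $\alpha_i, \beta_i$ cross the corresponding Dehn curve in Figure \ref{DehntwistPic}, then apply the local rule illustrated in Figure \ref{FigDehnaction} (``a path crossing the Dehn curve must turn right''), and finally read off the resulting word in $\pi_1(\Sigma_g\setminus\Delta, y_0)$. Concretely, I will proceed Dehn-twist by Dehn-twist.

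First I would handle the genus-local twists $\tau_{2k}$ and $\tau_{2k-1}$ for $k\in\intint{1}{g}$. Inspection of Figures \ref{figgens 1} and \ref{DehntwistPic} shows that the curve of $\tau_{2k}$ is isotopic to $\beta_k$ and meets only $\alpha_k$ (exactly once, transversely), and symmetrically for $\tau_{2k-1}$, whose curve is isotopic to $\alpha_k$ and meets only $\beta_k$; the other $\alpha_i, \beta_i, \gamma_j$ can be isotoped away from these curves. Applying the right-turn rule at the single crossing yields $\alpha_k\mapsto \alpha_k\beta_k$ for the right twist $\tau_{2k}$, and $\beta_k\mapsto \beta_k\alpha_k$ for the \emph{left} twist $\tau_{2k-1}$ (the sign being dictated by the orientation conventions of Figure \ref{figgens 1}).

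Next I would turn to the ``band'' twists $\tau_{2g+k}$ for $k\in\intint{1}{g-1}$, which encircle the $k$-th and $(k+1)$-st handles together. The curve of $\tau_{2g+k}$ crosses $\alpha_k$, $\beta_k$ and $\alpha_{k+1}$, but not $\beta_{k+1}$, not the $\gamma_j$, and not the other $\alpha_i, \beta_i$. For each of the three crossing generators I would apply the right-turn rule, and simplify; the natural loop one reads off ``along the Dehn curve'' is exactly $\Theta_k=\alpha_{k+1}\beta_{k+1}^{-1}\alpha_{k+1}^{-1}\beta_k$, which is why it governs all three formulas. The main source of care here is matching base points and orientations: I would use the standard device of cutting a thin annular neighborhood of the Dehn curve, passing to the universal cover of that annulus, and pushing the crossing arcs across the annulus to obtain explicit representatives, then re-glue into $\Sigma_g\setminus\Delta$. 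The resulting formulas are those of Table \ref{DehnAction1}.

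Finally, the claim that $\Theta_k$ is fixed by $\tau_{2g+k}$ is a direct substitution once the action is known. Using $\alpha_{k+1}\mapsto\Theta_k^{-1}\alpha_{k+1}$, $\beta_{k+1}\mapsto\beta_{k+1}$, and $\beta_k\mapsto\Theta_k^{-1}\beta_k\Theta_k$ in the defining word of $\Theta_k$ gives
\[
\tau_{2g+k}(\Theta_k)=\Theta_k^{-1}\bigl(\alpha_{k+1}\beta_{k+1}^{-1}\alpha_{k+1}^{-1}\beta_k\bigr)\Theta_k=\Theta_k^{-1}\Theta_k\Theta_k=\Theta_k,
\]
which, while a posteriori a consequence of the fact that $\Theta_k$ is freely homotopic to the Dehn curve itself, gives a clean algebraic confirmation that the formulas in Table \ref{DehnAction1} are internally consistent. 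The only real obstacle in the whole argument is bookkeeping of orientations and basepoints for $\tau_{2g+k}$; once a consistent convention is fixed this is routine, and I expect the bulk of the write-up to be a labeled figure for each of the three twist families rather than additional text.
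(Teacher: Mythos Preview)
Your proposal is correct and matches the paper's own treatment: the paper does not give a proof at all, merely stating that ``one can now easily check'' the table from the local picture of a Dehn twist in Figure~\ref{FigDehnaction}. Your outline is exactly this pictorial verification, carried out twist-by-twist, together with the direct substitution confirming that $\Theta_k$ is fixed; there is nothing to add.
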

 
  On the other hand, one can define  the mapping class group of orientation preserving homeomorphisms of $\bar{\Delta}$ that preserve the set $Y^n$ and restrict to the identity on $\partial \Delta$. It is classically called the braid group on $n$ strands and denoted $\Bn$. Continuating such homeomorphisms by the identity on the complement of $\Delta$ in $\Sigma_g$, we get a morphism 
$$\varphi_{0}: \Bn \to \fMCGplus.$$
 After Artin \cite{MR3069440}, the group $\Bn$ is generated by half-twists  $\sigma_1, \ldots, \sigma_{n-1}$, whose action is depicted in Figure \ref{DehnhalftwistPic}.
 
    \begin{figure}[htbp]
  \begin{centering} {\large \scalebox{.4}{\input{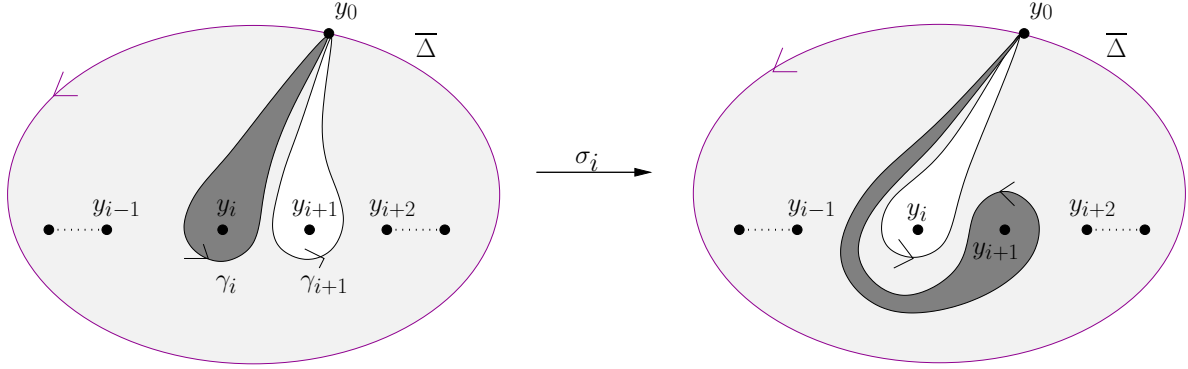}}\\
  }\end{centering}
  \caption{half-twists}\label{DehnhalftwistPic}
 \end{figure}
  
\begin{lem}[half-twists]
The action of $\Bn=\langle \sigma_1, \ldots, \sigma_{n-1}\rangle $ on the fundamental group $\pi_1(\bar{\Delta}\setminus Y^n, y_{0})$ is described in Table~$\ref{DehnAction2}$, where we only indicate the non-trivial actions on the generators. Moreover, Table~$\ref{DehnAction2}$ indicates the action  of $\sigma_{cycl}:=\sigma_{n-1}\circ \cdots \circ \sigma_{1} \in \Bn$ and some of its powers. 

 \begin{table}[htbp]$\begin{array}{|ll| lcl l|}
\hline
\sigma_{k}\,, &k\in\intint{1}{n-1} &\gamma_k & \mapsto& \gamma_k\gamma_{k+1}\gamma_k^{-1}&\\
 &&\gamma_{k+1}&\mapsto&\gamma_k&\\ \hline
\sigma_{cycl}  & & \gamma_1&\mapsto&\delta \gamma_n \delta^{-1}&\\
&  & \gamma_i&\mapsto&\gamma_{i-1}\,,&i\in \intint{2}{n}\\
\hline
\sigma_{cycl}^k \,,  &k\in \intint{1}{n}& \gamma_i& \mapsto & \delta \gamma_{n+i-k} \delta^{-1},&i\in \intint{1}{k}\\
&  & \gamma_j& \mapsto&\gamma_{j-k}\,,&j\in \intint{k+1}{n}\\
\hline

\end{array}$  \\$ $
\\  \caption{\label{DehnAction2}\label{tablecycl}}\end{table}
 \end{lem}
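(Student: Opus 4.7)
The plan is to establish the three parts of Table \ref{DehnAction2} in sequence: first the action of each half-twist $\sigma_k$ by geometric inspection, then the formula for $\sigma_{cycl}$ by iterating these, and finally the formula for $\sigma_{cycl}^k$ by induction on $k$.

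For the action of $\sigma_k$, I would proceed directly from Figure \ref{DehnhalftwistPic}. The half-twist is supported in a small sub-disk of $\bar{\Delta}$ containing only $y_k$ and $y_{k+1}$, and rotates these two punctures counterclockwise by $\pi$. Any generator $\gamma_i$ with $i\notin\{k,k+1\}$ can be homotoped through the complement of this sub-disk and is therefore fixed by $(\sigma_k)_*$. After the rotation, $y_{k+1}$ occupies the former position of $y_k$, so $(\sigma_k)_*\gamma_{k+1}$ is homotopic, relative to $Y^n$ and $y_0$, to the loop previously denoted $\gamma_k$; while $(\sigma_k)_*\gamma_k$ reaches the new position of $y_k$ (the former position of $y_{k+1}$) by first looping around $y_{k+1}$'s new position, giving $\gamma_k \gamma_{k+1}\gamma_k^{-1}$.

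For $\sigma_{cycl} = \sigma_{n-1}\circ\cdots\circ \sigma_1$, functoriality of $\mathfrak{a}$ reduces the computation to iterated substitution. A straightforward induction on $m$ using the formula just established shows that $\sigma_m\circ\cdots\circ\sigma_1$ sends $\gamma_j \mapsto \gamma_{j-1}$ for $2\leq j\leq m+1$, fixes $\gamma_j$ for $j>m+1$, and sends
\[
\gamma_1 \longmapsto (\gamma_1\cdots \gamma_m)\,\gamma_{m+1}\,(\gamma_1\cdots \gamma_m)^{-1}.
\]
Specialising to $m=n-1$ and using $\gamma_1\cdots\gamma_{n-1}=\delta\gamma_n^{-1}$, one obtains $\sigma_{cycl}(\gamma_1)=\delta\gamma_n\delta^{-1}$ and $\sigma_{cycl}(\gamma_i)=\gamma_{i-1}$ for $i\geq 2$.

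For the powers, the crucial observation is that $\sigma_{cycl}(\delta)=\delta$. This is immediate geometrically since each $\sigma_j$ has support in the interior of $\bar{\Delta}$ and $\delta$ is (freely homotopic to) its boundary; it may also be checked algebraically via $\sigma_{cycl}(\gamma_1\cdots \gamma_n)=\delta\gamma_n\delta^{-1}\cdot\gamma_1\cdots\gamma_{n-1}=\delta\gamma_n\delta^{-1}\cdot\delta\gamma_n^{-1}=\delta$. An induction on $k$ then yields the claim: for $j>k$, iterating $\sigma_{cycl}$ shifts $\gamma_j\mapsto \gamma_{j-1}\mapsto \cdots\mapsto \gamma_{j-k}$; for $i\leq k$, one applies $\sigma_{cycl}^{i-1}$ to reach $\gamma_1$, then one step of $\sigma_{cycl}$ to obtain $\delta\gamma_n\delta^{-1}$, after which the remaining $k-i$ applications only shift the inner generator's index (since $\delta$ is fixed), producing $\delta\gamma_{n+i-k}\delta^{-1}$. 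The main obstacle is the first step: translating the picture faithfully into algebra, since one must be careful with the sense of rotation, with the position of the base point $y_0$, and with the convention that $\mathfrak{a}$ is a push-forward rather than a pull-back. Once that verification is in place, the remaining two formulae follow by routine computation in $\pi_1(\bar{\Delta}\setminus Y^n,y_0)$ using the single relation $\gamma_1\cdots\gamma_n=\delta$.
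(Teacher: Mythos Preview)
Your argument is correct and is exactly the verification one would carry out; the paper itself does not give a proof of this lemma, treating the action of the $\sigma_k$ as read off directly from Figure~\ref{DehnhalftwistPic} and the formulae for $\sigma_{cycl}$ and its powers as routine consequences. Your explicit induction on $m$ for $\sigma_m\circ\cdots\circ\sigma_1$ and the observation that $\sigma_{cycl}$ fixes $\delta$ are precisely the missing details.
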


 \begin{rem}  Note that $\sigma_{cycl}$ is almost a cyclic permutation of the generators of $\pi_1(\bar{\Delta}\setminus Y^n, y_{0})$. More precisely,  it acts as such on the representations $\rho$ that satisfy $\rho(\delta)=\mathrm{id}$, \textit{e.g.} representations with abelian image.
  \end{rem}

By construction, the subgroups $\varphi_0( \Bn)$ and  $\varphi_g( \MCGg)$ of $\fMCGplus$
  commute, and we have a morphism 
 $$ \Bn\times  \MCGg \stackrel{\varphi_0\times\varphi_g}{\longrightarrow}  \fMCGplus \, .$$
 Composing with the canonical map  $\pi : \fMCGplus\rightarrow \fMCG$ (forgetting that $y_0$ is fixed) yields a morphism $\Bn\times  \MCGg\rightarrow \fMCG$, which is {not} 
 surjective. In order to generate the whole mapping class group $\fMCG$, it suffices to add $\min(0,n-1)$ Dehn twists, namely the ones corresponding to the loops $\tau_{3g},\ldots,\tau_{3g+n-2}$ of Figure \ref{FigMixing} (see \cite[Sec. $4.4.4$]{MR2850125}). We  call them \emph{mixing twists}.
 \begin{figure}[htbp]
  \begin{centering} {\large \scalebox{.5}{\input{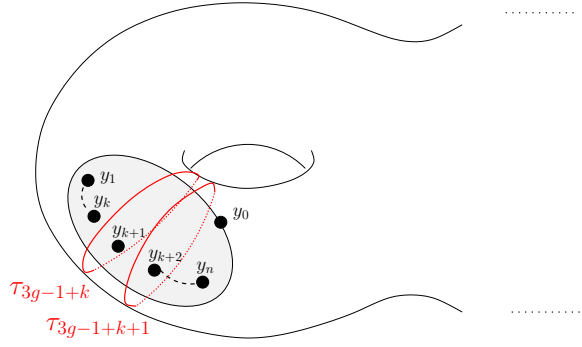}}} \end{centering}  
  \caption{Mixing twists \label{FigMixing}}
 \end{figure}

\begin{lem}[Mixing twists]
The action of the (right) mixing twists  $\tau_{3g},\ldots,\tau_{3g+n-2}$ on the fundamental group $\gf$ 
 is described in Table~$\ref{DehnAction3}$, where we only indicate the non-trivial actions on the generators. Moreover, for $k\in\intint{1}{n-1}$,
 the element $\Xi_k$ described there is fixed by $\tau_{3g-1+k}$.
 
 \begin{table}[htpb]$$\begin{array}{|ll|crll|}
\hline
\tau_{3g-1+k}\,,& k\in \intint{1}{n-1}   & \alpha_g & \mapsto& \alpha_g \Xi_k& \\
&&\beta_g & \mapsto& \Xi_k^{-1}\beta_g\Xi_k&\\
&&\gamma_i & \mapsto& \Xi_k^{-1} \gamma_i \Xi_k\,,&i\in \intint{1}{k}\\
&   && \textrm{for}&\Xi_k= (\gamma_1\ldots \gamma_k)^{-1}\beta_g&\\

\hline
\end{array}$$
\\  \caption{ \label{DehnAction3}}\end{table}
 \end{lem}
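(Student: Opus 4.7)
The plan is to apply directly the geometric rule depicted in Figure \ref{FigDehnaction}: a loop transverse to the Dehn curve of a right Dehn twist picks up a copy of (a based representative of) that Dehn curve at every crossing, inserted on the right side with the appropriate orientation. The proof is a direct verification, generator by generator, against Figure \ref{FigMixing}.

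First, I would identify the Dehn curve $c_k$ of $\tau_{3g-1+k}$ geometrically. From Figure \ref{FigMixing}, $c_k$ is the simple closed curve bounding the subsurface that contains the $g$-th handle together with the punctures $y_1,\ldots, y_k$, while the rest of $\Sigma_g$, including $y_0$ and the remaining punctures and handles, lies on the other side. A based loop at $y_0$ freely homotopic to $c_k$ is given precisely by $\Xi_k=(\gamma_1\cdots \gamma_k)^{-1}\beta_g$: the portion of the boundary encircling $y_1,\ldots,y_k$ contributes $(\gamma_1\cdots \gamma_k)^{-1}$, and the portion wrapping the $g$-th handle contributes $\beta_g$.

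Next, I would read off, from Figure \ref{FigMixing}, the geometric intersection of each generator of $\gf$ with $c_k$. The loops $\alpha_i,\beta_i$ for $i<g$ and $\gamma_j$ for $j>k$ can all be isotoped off $c_k$, hence are fixed. The loop $\beta_g$ crosses $c_k$ twice in a ``round trip'' fashion, and by the turning rule of Figure \ref{FigDehnaction} each crossing inserts a copy of $\Xi_k^{\pm1}$, combining into the conjugation $\beta_g\mapsto \Xi_k^{-1}\beta_g\Xi_k$. The same argument applies to each $\gamma_i$ with $i\leq k$: it crosses $c_k$ twice and is therefore conjugated by $\Xi_k$. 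Finally $\alpha_g$ is drawn so that, because its meridional portion threads the handle enclosed by $c_k$, its representative has a single essential transverse intersection with $c_k$, yielding the single extra factor $\alpha_g\mapsto \alpha_g\Xi_k$.

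The invariance $\tau_{3g-1+k}(\Xi_k)=\Xi_k$ is a general feature: any based loop freely homotopic to the Dehn curve is fixed by its Dehn twist. It can also be checked directly from the formulas above, since the conjugations of $\gamma_1,\ldots,\gamma_k$ telescope to give $(\gamma_1\cdots\gamma_k)^{-1}\mapsto \Xi_k^{-1}(\gamma_1\cdots\gamma_k)^{-1}\Xi_k$, and multiplying by $\beta_g\mapsto \Xi_k^{-1}\beta_g\Xi_k$ produces $\Xi_k\mapsto\Xi_k^{-1}\Xi_k\Xi_k=\Xi_k$. The main obstacle is the careful local bookkeeping of crossing orientations, distinguishing left from right on both $c_k$ and each generator; but this is a purely local inspection of Figure \ref{FigMixing} combined with Figure \ref{FigDehnaction}, analogous to the verification already carried out for the Dehn twists $\tau_i$ with $i\leq 3g-1$.
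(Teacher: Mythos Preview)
Your overall approach---read off intersections with the Dehn curve from Figure~\ref{FigMixing} and apply the turning rule of Figure~\ref{FigDehnaction}---is exactly what the paper intends; the lemma is stated without proof, as a direct verification from the pictures, just like the two preceding lemmas.

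There is, however, a genuine geometric error in your description of the Dehn curve $c_k$. You describe $c_k$ as a \emph{separating} curve bounding a subsurface that contains the whole $g$-th handle together with $y_1,\ldots,y_k$. This cannot be right: a separating simple closed curve represents the trivial class in $H_1(\Sigma_g\setminus Y^n)$, whereas $\Xi_k=(\gamma_1\cdots\gamma_k)^{-1}\beta_g$ has nonzero $\beta_g$-component in homology. The curve $c_k$ is in fact \emph{non-separating}: it is the curve $\tau_{2g}$ (freely homotopic to $\beta_g$) slid so as to also enclose the first $k$ punctures. This is consistent with the formula $\alpha_g\mapsto\alpha_g\Xi_k$, which parallels $\tau_{2g}:\alpha_g\mapsto\alpha_g\beta_g$.

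Your error propagates to the intersection count. If $c_k$ really separated off a genus-one piece containing the handle, with $y_0$ on the outside, then the based loop $\alpha_g$ would have to cross $c_k$ twice (once in, once out), producing a conjugation rather than the single factor $\alpha_g\Xi_k$. With the correct non-separating picture, $\alpha_g$ meets $c_k$ once algebraically, exactly as $\alpha_g$ meets $\tau_{2g}$; the loops $\beta_g$ and $\gamma_i$ ($i\le k$) each cross $c_k$ twice because the detour of $c_k$ around the punctures intersects the arcs in $\bar\Delta$ joining $y_0$ to the handle or to $y_i$. Your algebraic check that $\Xi_k$ is fixed is correct and unaffected by this.
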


The twists, mixing twists and braids we introduced all fix $y_{0}$. We denote by $\liftMCG$ the subgroup of $\fMCGplus$ they generate. If $g=0$, then we have $\liftMCG = \Bn$. We are interested in the case $g>0$, where we have
$$\liftMCG := \left\langle \tau_i \, , \sigma_j~\middle|~i\in \intint{1}{3g-1+ \min(0, n-1)}\, , \, j\in \intint{1}{n-1}\right\rangle \, . $$ As mentioned, the image of $\liftMCG$ under $\pi : \fMCGplus\rightarrow \fMCG$ is $\fMCG$. 
\begin{rem}\label{MixingTwistetDelta} We did not call $\delta = \gamma_1\cdots\gamma_n=\left([\alpha_1, \beta_1]\cdots[\alpha_g, \beta_g]\right)^{-1}$ a generator of the fundamental group. It will  nevertheless be useful to notice that among our preferred generators of $\liftMCG$, only the mixing twists act non trivially on $\delta$. More precisely, for $k\in \intint{1}{n-1}$ we have 
$$\tau_{3g-1+k}(\delta)= [\Xi_k^{-1}, \beta_g]\delta\, .$$
\end{rem}
 
\section{Affine representations with finite orbit}\label{Sec Aff case}
We have now established an  explicit description of the full mapping class group action on $\gf$, which is resumed in Table \ref{A}. This description at hand, we will now classify affine representations $\rho \in \mathrm{Hom}(\gf,\mathrm{Aff}(\mathbb{C}))$ with finite orbit $\fMCG\cdot[\rho]$ in $\chign{\Aff}$ for $g>0$: 
\begin{itemize}
\item We establish that for those representations $\rho \in \mathrm{Hom}(\gf,\mathrm{Aff}(\mathbb{C}))$ such that the group $\mathrm{Im}(\rho) $ is abelian, 
the orbit $\fMCG\cdot[\rho]$ is finite if and only if $\mathrm{Im}(\rho) $ is finite (see Proposition $\ref{abcase}$).\vspace{.2cm}
\item We then consider representations $\rho \in \mathrm{Hom}(\gf,\mathrm{Aff}(\mathbb{C}))$ such that the group $\mathrm{Im}(\rho) $ is not abelian. We classify all finite orbits in this case in three steps.\vspace{.1cm}
 \begin{itemize}
\item We  give a necessary condition for the finiteness of $\fMCG\cdot[\rho]$ in Lemma \ref{Prep}.\vspace{.1cm}
\item We prove that in the genus one case, this necessary condition is also sufficient (see Proposition $\ref{SpecialCaseg1c}$). \vspace{.1cm}
\item We prove that in the higher genus case, this necessary condition can be enforced (see Lemma \ref{Prep0}), and this latter necessary condition cannot hold for every conjugacy class $[\rho']\in \fMCG\cdot[\rho]$. We conclude that in the higher genus case, there are no conjugacy classes of non-abelian$\mathrm{Aff}(\mathbb{C})$-representations with finite orbit under $\fMCG$ (see Proposition \ref{NABCaseg2}).
\end{itemize}
\end{itemize} 

The group $\mathrm{Aff}(\mathbb{C})=\left\{(a_{ij})\in \mathrm{GL}_{2}(\mathbb{C})~\middle|~a_{21}=0\, , a_{22}=1 \right\}$  identifies with the group $\{z\mapsto az+b~\vert~ a\in\C^*,b\in \C\}$ of affine transformations of $\C$. For shortness, its elements will be denoted as polynomials $az+b$.  Our explicit calculations are easier to check with the following formulas in mind.
$$\fbox{$\begin{array}{rcl}
(\lambda z)\circ (a z+b)\circ (\lambda z)^{-1}& = &a z+\lambda b\\
(z+c)\circ (az+b)\circ (z+c)^{-1}& = &a z+b-c(a-1)\\
{[}\lambda z+c, a z+b{]}   & = &z-c(a-1)+(\lambda-1) b 
\end{array}$}$$
Also, recall that by definition, for all $\tau \in \liftMCG~,~\rho\in \mathrm{Hom}(\gf,\mathrm{Aff}(\mathbb{C}))$ and $\alpha \in \gf$, we have
$$\fbox{$(\tau\cdot\rho)(\alpha)=\rho(\tau^{-1}_*\alpha) \, .
$}$$

\subsection{Abelian case}
\begin{lem}[Finding a non-trivial subgroup] \label{lemnontriv}  Let $g>0$, $n\in \N$. Let $G$ be a group with identity element $\mathrm{id}$ and let
 $\rho  :  \gf   \to G$ be a representation. Assume that for any $\rho' \in  \liftMCG\cdot\rho$, we have $$\rho'(\alpha_g) = \mathrm{id}\, . $$   Then $\rho$ is the trivial representation, \textit{i.e.} $\mathrm{Im}(\rho)  =\{\mathrm{id}\}$. 
\end{lem}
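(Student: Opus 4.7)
The plan is to show that $\rho$ sends each of the standard generators $\alpha_k,\beta_k,\gamma_j$ of $\gf$ to $\mathrm{id}$, hence is trivial, by a bootstrapping argument fed by the $\liftMCG$-orbit of $\alpha_g$. The central observation, to be invoked repeatedly, is that the hypothesis is inherited by every $\rho' \in \liftMCG\cdot\rho$: since $\liftMCG\cdot\rho'=\liftMCG\cdot\rho$, the class $\rho'$ satisfies the same hypothesis as $\rho$. Consequently, any identity ``$\rho(\xi)=\mathrm{id}$'' derived purely from the hypothesis also holds for every $\rho'$ in the orbit, which is equivalent to saying that $\rho$ kills the entire $\liftMCG$-orbit of $\xi$ inside $\gf$. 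Each newly killed element may thereafter play the role of input for the next bootstrap step.

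I would first exploit the twists living in the $g$-th handle. Reading off Table \ref{A}, $\tau_{2g}$ fixes $\beta_g$ and sends $\alpha_g\mapsto\alpha_g\beta_g$, so $\tau_{2g}^{-1}(\alpha_g)=\alpha_g\beta_g^{-1}$, which yields $\rho(\beta_g)=\mathrm{id}$. Next, $\tau_{3g-1}$ fixes $\Theta_{g-1}=\alpha_g\beta_g^{-1}\alpha_g^{-1}\beta_{g-1}$ and sends $\alpha_g\mapsto\Theta_{g-1}^{-1}\alpha_g$, so $\tau_{3g-1}^{-1}(\alpha_g)=\Theta_{g-1}\alpha_g$; reducing modulo the subgroup $\langle\alpha_g,\beta_g\rangle$ already in $\ker\rho$ gives $\rho(\beta_{g-1})=\mathrm{id}$. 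By the inheritance remark, $\rho$ now annihilates the orbit of $\beta_{g-1}$, and applying $\tau_{2g-3}^{-1}$ to $\beta_{g-1}$ produces $\beta_{g-1}\alpha_{g-1}^{-1}$, hence $\rho(\alpha_{g-1})=\mathrm{id}$. Iterating this two-step move downward in the handle index $k$ — first using $\tau_{2g+k}^{-1}(\alpha_{k+1})=\Theta_k\alpha_{k+1}$ together with $\rho(\alpha_{k+1})=\rho(\beta_{k+1})=\mathrm{id}$ to deduce $\rho(\beta_k)=\mathrm{id}$, then $\tau_{2k-1}^{-1}(\beta_k)=\beta_k\alpha_k^{-1}$ to deduce $\rho(\alpha_k)=\mathrm{id}$ — delivers $\rho(\alpha_k)=\rho(\beta_k)=\mathrm{id}$ for every $k\in\intint{1}{g}$.

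For the puncture generators, when $n\ge 2$ I would use the mixing twists: $\tau_{3g-1+k}$ fixes $\Xi_k=(\gamma_1\cdots\gamma_k)^{-1}\beta_g$ and sends $\alpha_g\mapsto\alpha_g\Xi_k$, so $\tau_{3g-1+k}^{-1}(\alpha_g)=\alpha_g\Xi_k^{-1}$; combined with $\rho(\beta_g)=\mathrm{id}$ this forces $\rho(\gamma_1\cdots\gamma_k)=\mathrm{id}$ for every $k\in\intint{1}{n-1}$, hence $\rho(\gamma_k)=\mathrm{id}$ for such $k$ by successive cancellation. The surface relation $\gamma_1\cdots\gamma_n=([\alpha_1,\beta_1]\cdots[\alpha_g,\beta_g])^{-1}$ then handles the last generator $\gamma_n$, and covers also the edge case $n=1$. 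The proof is essentially a sequence of short computations with the formulas in Table \ref{A}; the only point requiring real vigilance, and the closest thing to a genuine obstacle, is to invoke the inheritance observation of the first paragraph before each new bootstrap step, so that what is initially only a statement about $\alpha_g$ can actually be used in turn for $\beta_g$, $\beta_{g-1}$, $\alpha_{g-1}$, and so on down the chain.
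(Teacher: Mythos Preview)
Your proof is correct and follows essentially the same bootstrap strategy as the paper: exploit the $\liftMCG$-invariance of the hypothesis together with the explicit formulas of Table~\ref{A} to successively force $\rho(\beta_g),\rho(\beta_{g-1}),\rho(\alpha_{g-1}),\ldots$ to be trivial, then treat the $\gamma_j$'s. The only notable difference is in the second step: you apply each mixing twist $\tau_{3g-1+k}$ directly to $\alpha_g$ to obtain $\rho(\gamma_1\cdots\gamma_k)=\mathrm{id}$ and then cancel, whereas the paper routes through $\sigma_{cycl}^{n-i}\circ\tau_{3g+n-2}$ to isolate each $\rho(\gamma_i)$ individually; your argument is slightly more direct here.
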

\begin{proof} To each element $\rho' \in \liftMCG\cdot \rho$, we associate the following two groups: $$R':=\langle \rho'(\alpha_g), \rho'(\beta_g), \ldots, \rho'(\alpha_1), \rho'(\beta_1)\rangle \, , \quad S':=\langle \rho'(\gamma_1),  \ldots, \rho'(\gamma_n)\rangle.$$

\begin{itemize} 
\item First step: \emph{for any $\rho' \in  \liftMCG\cdot\rho$, the associated group $R'$ is trivial.}\\
For $k\in \intint{1}{g}$, define the following property, which we shall denote $H(k)$:
$$\hspace{1cm} \fbox{ \emph{For any $\rho' \in  \liftMCG\cdot\rho$, the group $R_k':=\langle \rho'(\alpha_g), \rho'(\beta_g), \ldots, \rho'(\alpha_k), \rho'(\beta_k)\rangle$ is trivial.}}$$
Reasoning by decreasing induction, let us first prove that our assumption implies $H(g)$. 
Consider $\tau:=\tau_{2g}^{-1}$ and $\rho'=\tau\cdot\rho$. Then $\rho'(\alpha_g)=\rho(\alpha_g\beta_g)=\rho(\beta_g)$.  We  have $\rho'(\alpha_g)=\rho(\alpha_g)=\mathrm{id}$, hence $\rho(\beta_g)=\rho(\alpha_g)=\mathrm{id}$.
However, $\rho$ satisfies the assumption of the statement if and only if any $\tilde \rho \in  \liftMCG\cdot\rho$ does. 
Hence we have $H(g)$.\\
Let now $\rho$ be a representation satisfying $H(k)$. In particular, we have $$ \rho(\alpha_i)=\rho(\beta_i)=\mathrm{id}\quad \forall i\in \intint{k}{g}\, .$$ 
For $\rho'=\tau\cdot\rho$, with $\tau=\tau_{2g+k-1}^{-1}$ we have
$
 \rho'(\alpha_{k})= \rho( \beta_{k-1}^{-1} \alpha_k \beta_{k})= \rho( \beta_{k-1}) ^{-1}
$. \\
 For $\rho'=\tau\cdot\rho$, with $\tau=(\tau_{2k-3}\circ \tau_{2g+k-1})^{-1}$, we have 
$
 \rho'(\alpha_{k})= \rho(\beta_{k-1}\alpha_{k-1}) ^{-1}
$. 
\\ Hence $\rho$ satisfying $H(k)$ implies $$ \rho(\alpha_i)=\rho(\beta_i)=\mathrm{id}\quad \forall i\in \intint{k-1}{g}\, .$$  Yet again $\rho$ satisfies $H(k)$ if and only if any $\tilde \rho  \in  \liftMCG\cdot\rho$ does.  This yields $H(k-1)$. We conclude by noticing $R'=R_1'$.
\vspace{.2cm}
\item Second step: \emph{for any $\rho' \in  \liftMCG\cdot\rho$, the associated group $S'$ is trivial.}\\
If $n=0$ or $n=1$, there is nothing to prove. Assume $n>1$.
We have already proven that $R'$ is trivial for any $\rho' \in  \liftMCG\cdot\rho$. In particular $\rho'(\delta)=\mathrm{id}$.  Considering, for $i\in \intint{1}{n}$,  the action of $\tau=(\sigma_{cycl}^{n-i} \circ \tau_{3g+n-2})^{-1}$  on $\alpha_g$  then shows that  for $\rho'=\tau\cdot\rho$ we have $\mathrm{id}=\rho'(\alpha_g)=\rho(\gamma_i)$ (see Table~\ref{tablecycl} page~\pageref{tablecycl}). Hence
$$\langle \rho(\gamma_1),  \ldots, \rho(\gamma_n)\rangle =\{\mathrm{id}\}\, .$$
   Since the assertion is $\liftMCG$-invariant, we have proven that $S'$ is trivial for any $\rho' \in  \liftMCG\cdot\rho$.  \vspace{-.2cm} \end{itemize}
   We conclude that $\mathrm{Im}(\rho)=\mathrm{Im}(\rho')=\langle S',R'\rangle =\{\mathrm{id}\}$. 
\end{proof}

\begin{prop}[Abelian case] \label{abcase} Let $g>0$. Let $\rho  : \gf \to \mathrm{Aff}(\mathbb{C})$ be a representation such that the group $\mathrm{Im}(\rho)$ is abelian. Then the orbit of the conjugacy class $[\rho]$ under the action of $\fMCG$ is finite if and only if $\mathrm{Im}(\rho)$ is finite. 
\end{prop}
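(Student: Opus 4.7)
My plan is to establish the two implications separately. For $\mathrm{Im}(\rho)$ finite, the orbit $\fMCG\cdot\rho$ sits inside $\Hom(\gf,\mathrm{Im}(\rho))$, a finite set since $\gf$ is finitely generated; this gives the ``if'' direction immediately.

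For the converse, I use that every abelian subgroup of $\Aff$ either lies in the translation subgroup $\{z\mapsto z+b\}$ or, after $\Aff$-conjugation, stabilises a common point and hence lies in $\C^*\cong\{z\mapsto az\}$. I treat these two cases separately.

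In the translation case, $\rho$ descends to a homomorphism $\bar\rho:H_1(\Sigma_g\setminus Y^n;\Z)\to(\C,+)$, on which the pure mapping class group acts symplectically on the $(\alpha_i,\beta_i)$-part and trivially on each peripheral class $\gamma_j$. If $\bar\rho$ is non-zero on the $(\alpha_i,\beta_i)$-part, the $\mathrm{Sp}(2g,\Z)$-orbit of the corresponding non-zero vector in $\C^{2g}$ is infinite (by Zariski density of $\mathrm{Sp}(2g,\Z)$ in $\mathrm{Sp}(2g,\C)$ and transitivity of the latter on $\C^{2g}\setminus\{0\}$), and this survives passage modulo the $\C^*$-scaling coming from conjugation. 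If instead $\bar\rho$ vanishes on the $(\alpha_i,\beta_i)$-part but not identically, I will use the mixing twist $\tau_{3g-1+k}$, which by Table~\ref{A} sends $\bar\rho(\alpha_g)$ to $\sum_{i=1}^k\bar\rho(\gamma_i)$; some partial sum with $k\in\intint{1}{n-1}$ must be non-zero (otherwise a telescoping argument combined with $\sum\bar\rho(\gamma_i)=0$ forces everything to vanish), reducing to the previous subcase. Hence finite orbit forces $\bar\rho=0$, i.e.\ $\mathrm{Im}(\rho)=\{0\}$.

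In the $\C^*$-case, $\rho$ is a character and the $\fMCG$-equivariant projection $\chign{\Aff}\to\Hom(\gf,\C^*)$ transfers finiteness of the orbit of $[\rho]$ to finiteness of the orbit of the character $\rho=\rho_{\C^*}$. The induced symplectic action on $(\rho(\alpha_i),\rho(\beta_i))\in(\C^*)^{2g}$ then forces these values to be roots of unity, since a character of $\Z^{2g}$ of infinite order has infinite $\mathrm{Sp}(2g,\Z)$-orbit. To obtain the analogous conclusion for the peripheral values, I will use the mixing twist $\tau_{3g-1+k}$: since $\Xi_k\equiv\beta_g-\sum_{i=1}^k\gamma_i$ in the abelianization, iterating this twist sends $\rho(\alpha_g)$ to $\rho(\alpha_g)\cdot\eta_k^m$ with $\eta_k=\rho(\beta_g)^{-1}\rho(\gamma_1)\cdots\rho(\gamma_k)$, while leaving $\rho(\beta_g)$ and $\rho(\gamma_1),\ldots,\rho(\gamma_k)$ fixed; finite orbit thus forces $\eta_k$ to be a root of unity for each $k\in\intint{1}{n-1}$. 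Combining with $\rho(\beta_g)$ being a root of unity and using $\prod_j\rho(\gamma_j)=1$, a simple ratio argument yields that every $\rho(\gamma_j)$ is a root of unity, so $\mathrm{Im}(\rho)$ is finitely generated inside the roots of unity and therefore finite. The only genuinely subtle step is this use of the mixing twists $\tau_{3g-1+k}$: they are the only generators of $\liftMCG$ which mix the peripheral loops $\gamma_j$ with the closed-surface generator $\alpha_g$, and this mixing is precisely what transfers the finiteness information from the $\mathrm{Sp}(2g,\Z)$-controlled part to the peripheral part.
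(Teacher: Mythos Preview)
Your proof is correct. The ``if'' direction and the linear ($\C^*$) case are essentially identical to the paper's: both use the individual twists $\tau_{2i},\tau_{2i-1}$ to force each $\rho(\alpha_i),\rho(\beta_i)$ to be torsion, and the mixing twists $\tau_{3g-1+k}$ to force the partial products $\rho(\gamma_1)\cdots\rho(\gamma_k)$ (hence each $\rho(\gamma_j)$) to be torsion.

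The translation case is where the two arguments diverge. The paper first proves an auxiliary result (Lemma~\ref{lemnontriv}) by an induction over the handles and the mixing twists, concluding that any representation $\rho$ all of whose $\liftMCG$-translates kill $\alpha_g$ must be trivial; it then normalises to $\rho(\alpha_g)=z+1$, $\rho(\beta_g)=z+c$ and observes that the single twist $\tau_{2g-1}$ already produces infinitely many conjugacy classes. You instead bypass Lemma~\ref{lemnontriv} entirely: you use the full $\mathrm{Sp}(2g,\Z)$-action on the $(\alpha_i,\beta_i)$-coordinates in $H_1$ (via Zariski density in $\mathrm{Sp}(2g,\C)$ and transitivity on $\mathbb{P}^{2g-1}$) to handle the case where this part is nonzero, and then a single application of a mixing twist (together with the telescoping observation on partial sums of the $\bar\rho(\gamma_i)$) to reduce the remaining case to that one. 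Your route is more structural and avoids the handle-by-handle induction; the paper's route is more elementary and entirely explicit in terms of single Dehn twists. One minor point you gloss over but which does hold: when the $\gamma$-part is nonzero it is fixed by the twists $\tau_1,\dots,\tau_{3g-1}$, so the $\C^*$-scaling can be absorbed there and the $(\alpha,\beta)$-orbit is genuinely the $\mathrm{Sp}(2g,\Z)$-orbit in $\C^{2g}$; when the $\gamma$-part is zero you are in $\mathbb{P}^{2g-1}$, where your Zariski-density argument applies.
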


\begin{proof} If  $\mathrm{Im}(\rho)$ is finite, then the orbit $\liftMCG\cdot \rho$ is finite. \textit{A fortiori},  the orbit $\liftMCG \cdot [\rho]$ is finite. \\Assume now that $\rho$ is abelian and the orbit of $[\rho]$ is finite. Since $\mathrm{Im}(\rho)$ is an abelian subgroup of $\mathrm{Aff}(\mathbb{C})$ it is, up to conjugation, either a non-trivial subgroup of the translation group $$\{z\mapsto z+c ~|~c\in \mathbb{C}\}\subset \mathrm{Aff}(\mathbb{C})\, , $$ or it is a subgroup of the linear group
$$\{z\mapsto \lambda z ~|~\lambda \in \mathbb{C}^*\}\subset \mathrm{Aff}(\mathbb{C})\, .$$
\begin{itemize} 
\item \emph{$\mathrm{Im}(\rho)$ cannot be a non-trivial translation group.} \\
Indeed, if it would be the case, by Lemma \ref{lemnontriv}, we might assume $\rho(\alpha_g)\neq \mathrm{id}$. Up to conjugation, we would then have 
$$\rho \left(\begin{array}{l}\alpha_g\\
\beta_g 
\end{array}\right)=\left(\begin{array}{l}
 z +1\\
z+c\end{array}\right)$$
for a certain $c\in \C$. Considering the action of $\tau^{-m}$ with  $\tau :=\tau_{2g-1}$ :
$$\tau^{-m}\cdot \rho \left(\begin{array}{l}\alpha_g\\
\beta_g 
\end{array}\right)=\rho \left(\begin{array}{l}\alpha_g\\
\beta_g \alpha_g^m
\end{array}\right)=\left(\begin{array}{l}
 z +1\\
z+c+m\end{array}\right),$$
we would deduce that, for $m\neq m'$, the conjugacy classes of $\tau^m\cdot \rho$ and $\tau^{m'}\cdot \rho$ are distinct. Hence $\liftMCG\cdot [\rho]$ would be infinite, yielding a contradiction.
\vspace{.2cm}
\item \emph{If $\mathrm{Im}(\rho)$ is a subgroup of the linear group, then it is finite.}\\
Note that two distinct linear representations are not conjugated.
For any $i\in \intint{1}{g}$, finiteness of the orbit under $\langle\tau_{2i}\rangle$ yields that $\rho(\beta_i)$ is torsion. Similarly, considering $\langle\tau_{2i-1}\rangle$ yields that
 $\rho(\alpha_i)$ is torsion for all $i\in \intint{1}{g}$. For $j\in \intint{1}{n-1}$, finiteness  of the orbit under $\langle\tau_{3g-1+j}\rangle$ yields that $\rho(\gamma_1\ldots\gamma_j)$ is torsion. Consequently,   $\gamma_j$ is torsion for all $j\in \intint{1}{n-1}$.
 Hence $$\mathrm{Im}(\rho)=\left\langle \rho(\alpha_i),\rho(\beta_i),\rho(\gamma_j)~\middle|~i\in \intint{1}{g}\, ,~ j\in \intint{1}{n-1}\right\rangle$$ is an abelian group generated by finitely many torsion elements, whence the conclusion.
\end{itemize}
\end{proof}
\subsection{Preparation lemmata}

\begin{lem}[Finding a non-abeliansubgroup]\label{normalize} Let $g>0$.
Let $\rho  :  \gf \to \mathrm{Aff}(\mathbb{C})$ be a representation.  Assume that for any $\rho' \in  \liftMCG\cdot\rho$, the subgroup $$\langle \rho'(\alpha_g), \rho'(\beta_g)\rangle$$ of $\mathrm{Im}(\rho)$ is abelian.  Then $\rho$  is  an abelian representation, \textit{i.e.} $\mathrm{Im}(\rho)$ is abelian. 
\end{lem}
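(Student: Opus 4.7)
The strategy mirrors Lemma \ref{lemnontriv}, with commutativity replacing triviality. For $k\in\intint{1}{g}$ I will consider the property $H(k)$: for every $\rho'\in \liftMCG\cdot\rho$, the subgroup $\langle \rho'(\alpha_i),\rho'(\beta_i):i\in\intint{k}{g}\rangle$ of $\mathrm{Aff}(\C)$ is abelian. The case $k=g$ is exactly the hypothesis, and I will perform a downward induction to $H(1)$ followed by an extension to the loops $\gamma_j$.

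For the step $H(k)\Rightarrow H(k-1)$ (with $k\in\intint{2}{g}$), fix $\rho'\in\liftMCG\cdot\rho$ and form $\rho'':=\tau_{2g+k-1}^{-1}\cdot\rho'$. Since $\tau_{2g+k-1}$ fixes $\Theta_{k-1}=\alpha_k\beta_k^{-1}\alpha_k^{-1}\beta_{k-1}$ and sends $\alpha_k\mapsto\Theta_{k-1}^{-1}\alpha_k$ while leaving $\beta_k$ and all $\alpha_i,\beta_i$ for $i>k$ untouched, the relation $[\rho'(\alpha_k),\rho'(\beta_k)]=\mathrm{id}$ granted by $H(k)$ simplifies $\rho'(\Theta_{k-1})$ to $\rho'(\beta_k)^{-1}\rho'(\beta_{k-1})$, yielding
$$\rho''(\alpha_k)=\rho'(\beta_{k-1})^{-1}\rho'(\beta_k)\rho'(\alpha_k).$$
Then $H(k)$ applied to $\rho''$ forces $\rho''(\alpha_k)$ to commute with each $\rho''(\alpha_i)=\rho'(\alpha_i)$ and $\rho''(\beta_i)=\rho'(\beta_i)$ for $i\in\intint{k}{g}$; these conditions reduce, by elementary affine commutator manipulations exploiting the commutations already provided by $H(k)$ for $\rho'$, to the commutation of $\rho'(\beta_{k-1})$ with the same elements. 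A symmetric argument using $\tau_{2k-3}^{-1}\cdot\rho'$ (which sends $\beta_{k-1}\mapsto\beta_{k-1}\alpha_{k-1}$) produces the analogous commutation for $\rho'(\alpha_{k-1})$. To close the step I still need $[\rho'(\alpha_{k-1}),\rho'(\beta_{k-1})]=\mathrm{id}$: for this I would exhibit $\tau\in\liftMCG$ with $\tau^{-1}_*\alpha_g=\alpha_{k-1}$ and $\tau^{-1}_*\beta_g=\beta_{k-1}$, realised as a product of the handle-transposing twists $\tau_{2g+j}$ that cascades handle $k-1$ to position $g$, and then the hypothesis of the lemma applied to $\tau\cdot\rho'$ gives this commutation.

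Once $H(1)$ is reached, only the $\gamma_j$'s remain. Applying the hypothesis to $\tau_{3g-1+k}^{-1}\cdot\rho'$ and using that $\tau_{3g-1+k}$ fixes $\Xi_k=(\gamma_1\cdots\gamma_k)^{-1}\beta_g$ and sends $\alpha_g\mapsto\alpha_g\Xi_k$, $\beta_g\mapsto\Xi_k^{-1}\beta_g\Xi_k$, the commutation $[\rho'(\alpha_g)\rho'(\Xi_k)^{-1},\rho'(\Xi_k)\rho'(\beta_g)\rho'(\Xi_k)^{-1}]=\mathrm{id}$, combined with $[\rho'(\alpha_g),\rho'(\beta_g)]=\mathrm{id}$, reduces via elementary affine arithmetic to the statement that $\rho'(\gamma_1\cdots\gamma_k)$ commutes with $\rho'(\beta_g)$. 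Telescoping in $k$ gives $[\rho'(\gamma_j),\rho'(\beta_g)]=\mathrm{id}$ for $j\in\intint{1}{n-1}$, and the relation $\gamma_1\cdots\gamma_n=([\alpha_1,\beta_1]\cdots[\alpha_g,\beta_g])^{-1}$ combined with $H(1)$ takes care of $\gamma_n$. Iterating this argument after conjugating by the handle-swaps above, and permuting the $\gamma_j$'s among themselves via the half-twists $\sigma_k$, produces pairwise commutation of the full generating set, yielding that $\mathrm{Im}(\rho)$ is abelian.

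The main obstacle is the handle-swap step: constructing an explicit word in the generators of $\liftMCG$ realising the interchange $(\alpha_g,\beta_g)\mapsto(\alpha_{k-1},\beta_{k-1})$ and tracking its effect on all generators of $\gf$. The affine commutator computations are otherwise routine thanks to Table \ref{A} and the conventions recorded at the start of Section \ref{Sec Aff case}.
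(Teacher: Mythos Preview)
Your outline has two genuine gaps that the paper's argument sidesteps entirely.

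First, in the induction step $H(k)\Rightarrow H(k-1)$ you need \emph{all} pairwise commutations among the generators $\rho'(\alpha_i),\rho'(\beta_i)$ for $i\in\intint{k-1}{g}$. From $\tau_{2g+k-1}^{-1}\cdot\rho'$ you extract that $\rho'(\beta_{k-1})$ commutes with $\rho'(\beta_k)$ and with the generators of index $>k$, but not that it commutes with $\rho'(\alpha_k)$: the new element $\rho''(\alpha_k)$ replaces $\rho'(\alpha_k)$ in $R_k''$, so $H(k)$ for $\rho''$ gives you nothing about $[\rho'(\beta_{k-1}),\rho'(\alpha_k)]$. The phrase ``elementary affine commutator manipulations'' does not close this. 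Second, your step establishing $[\rho'(\alpha_{k-1}),\rho'(\beta_{k-1})]=\mathrm{id}$ rests on a handle-swap $\tau\in\liftMCG$ with $\tau^{-1}_*(\alpha_g,\beta_g)=(\alpha_{k-1},\beta_{k-1})$; you flag this as the main obstacle and do not construct it. No such $\tau$ exists literally: the twists $\tau_{2g+j}$ mix adjacent handles through the elements $\Theta_j$, and any word in them will send $(\alpha_g,\beta_g)$ to a pair involving products and conjugates, not to $(\alpha_{k-1},\beta_{k-1})$ on the nose. One can hope for images that are simultaneous conjugates of $(\alpha_{k-1},\beta_{k-1})$, which would suffice, but this requires real work that is absent here.

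The paper avoids both problems with a single structural observation about $\mathrm{Aff}(\C)$: any subgroup with nontrivial centre is already abelian (the centraliser of a nontrivial affine map is either the translation group or a point-stabiliser, both abelian). So instead of tracking pairwise commutations, the paper proves the much weaker statement that $R_g'=\langle\rho'(\alpha_g),\rho'(\beta_g)\rangle$ lies in the centre of $R_1'$, and later of $\mathrm{Im}(\rho')$, for every $\rho'$ in the orbit. The induction for this only requires showing that $\rho'(\beta_g)$ (and then $\rho'(\alpha_g\beta_g)$) centralises the newly added generators, which falls out directly from the twist computations with no handle-swap needed. Then Lemma~\ref{lemnontriv} furnishes an orbit representative with $R_g'$ nontrivial, and the centre argument finishes. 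This is genuinely simpler than what you attempt, and it is the missing idea in your proposal.
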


\begin{proof} We follow the same proof scheme we used for Lemma~\ref{lemnontriv}.
For any $k \in \intint{1}{g}$, to any $\rho' \in  \liftMCG\cdot\rho$ we may associate the following groups: $$R'_k:=\langle \rho'(\alpha_g), \rho'(\beta_g), \ldots, \rho'(\alpha_k), \rho'(\beta_k)\rangle \, , \quad S':=\langle \rho'(\gamma_1),  \ldots, \rho'(\gamma_n)\rangle\, .$$ 
\begin{itemize} 
\item First step: \emph{For any $\rho' \in  \liftMCG\cdot\rho$, the group $R_g'$ is contained in the center of $R_1'$.}\\
For $k \in \intint{1}{g}$, define the following property.\vspace{.2cm}\\
$H(k) :~\fbox{ \emph{For any $\rho' \in  \liftMCG\cdot\rho$, the group $R_g'$ is a subgroup of the center of $R_k'$.}}$
\vspace{.2cm}
\\
 By assumption, we have $H(g)$. Reasoning by decreasing induction, assume now $H(k)$ is proven. In particular, $R_g:=\langle \rho(\alpha_g), \rho(\beta_g)\rangle$ is   a subgroup of the center of $R_{k}:=\langle \rho(\alpha_g), \rho(\beta_g), \ldots, \rho(\alpha_k), \rho(\beta_k)\rangle$. \\
 Note that $\rho$ satisfies  $H(k)$ if and only if any $\tilde \rho \in  \liftMCG\cdot\rho$ satisfies  $H(k)$. Hence in order to prove $H(k-1)$, is suffices to prove that $R_g $ is also  a subgroup of the center of $R_{k-1}$.   \\
For $\rho'=\tau\cdot\rho$, with $\tau=\tau_{2g+k-1}^{-1}$, only one of the generators of $R_k$ is modified, namely  
$$ \rho'(\alpha_{k})= \rho( \beta_{k-1}^{-1} \alpha_k \beta_{k})= \rho( \beta_{k-1})^{-1}\rho( \alpha_k \beta_{k})\, .
$$ 
In particular, we have $\rho'(\beta_g)=\rho(\beta_g)$. 
Then $H(k)$ implies that $\rho(\beta_g)$ belongs to the center of 
$\langle R_k, R_k'\rangle=\langle R_k, \rho(\beta_{k-1})\rangle\, . $
 For $\rho''=\tau'\cdot\rho$, with $\tau'=\tau\circ \tau_{2k-3} ^{-1}$, we have 
$$
 \rho''(\alpha_{k})=  \rho( \beta_{k-1}\alpha_{k-1})^{-1}\rho( \alpha_k \beta_{k})\, .
$$
Then $H(k)$ implies that   $\rho''(\beta_g)=\rho(\beta_g) $ belongs to the center of $\langle R_k , R_k''\rangle=$ \linebreak $\langle R_k,  \rho(\beta_{k-1}\alpha_{k-1})\rangle  .$ 
We have now proven that for any representation $\rho$ such that $H(k)$ holds, $\rho(\beta_g)$  is an element of  the center of $$ R_{k-1}=\langle R_k, R_k',R_k''\rangle\, .$$
This assertion applied to $\tau_{2g-1}^{-1} \cdot \rho$ shows that $\rho(\beta_g\alpha_g)$ is an element of  the center of $R_{k-1}$.
Hence $R_g=\langle\rho(\beta_g),\rho(\beta_g\alpha_g) \rangle$ is a subgroup of the center of $R_{k-1}$. 
\vspace{.2cm}
  \item Second step: \emph{For any $\rho' \in  \liftMCG\cdot\rho$, the group $R':=R_1'$ is abelian.}\\
 If $R'$ is trivial, then in particular it is abelian.
 If $R'$ is non-trivial, then by the first step in the proof of Lemma \ref{lemnontriv}, we can find $\tau' \in \liftMCG$ such that for the induced representation $\rho''=\tau'\cdot \rho$ we have $R''=R'$ and $R_g''$ is non-trivial. Hence by the first step of the current lemma, $R'$ has a non-trivial center. Yet any subgroup of $\mathrm{Aff}(\mathbb{C})$ with non-trivial center is abelian. 
 \vspace{.2cm}
\item Third step:  \emph{For any $\rho' \in  \liftMCG\cdot\rho$, the group $R_g'$ is a subgroup of the center of $\mathrm{Im}(\rho')$.}\\
We have now proven that under our assumption, $R'$ is abelian  for any $\rho' \in  \liftMCG\cdot\rho$.
In particular, $\rho(\delta)=\mathrm{id}$.  Recall, from Remark~\ref{MixingTwistetDelta}, the action of the mixing twist $\tau_{3g+n-2}$
on $\delta$. It is given by 
$ \delta  \mapsto   [\beta_g^{-1}\delta\gamma_n^{-1}, \beta_g]\delta. $ \\
Hence, for $\rho'=\tau\cdot \rho$ with $\tau=(\sigma_{cycl}^{n-i}\circ  \tau_{3g+n-2})^{-1}$, we have
 $$\rho'(\delta)=[\rho(\beta_g^{-1} \gamma_i^{-1}),\rho( \beta_g)] =[\rho(\beta_g)^{-1}, \rho(\gamma_i)^{-1}]\, $$
 (see Table~\ref{tablecycl} page~\pageref{tablecycl}). 
Consequently, $\rho(\beta_g)$ centralizes $S:=\langle \rho(\gamma_i)~|~i\in\intint{1}{n}\rangle$. 
Yet we could have applied the same argument to $\rho''=\tau'\cdot \rho$, where $\tau'=\tau_{2g-1}^{-1}$ is the inverse of the Dehn-twist 
$\beta_g\mapsto \beta_g\alpha_g$, and we would have obtained that  $\rho(\beta_g\alpha_g)$ centralizes $S$. It follows that $R_g$ centralizes $S$. By $\liftMCG$-invariance of the statement, we deduce that   for any $\rho' \in  \liftMCG\cdot\rho$,  the associated group $R_g'$ centralizes  $\mathrm{Im}(\rho')=\langle R',S'\rangle.$
\vspace{.2cm}
\item Fourth step:  \emph{ $\mathrm{Im}(\rho)$ is abelian.}\\
If $\rho$ is the trivial representation, there is nothing to prove. Otherwise, 
by Lemma \ref{lemnontriv}, there is a representation $\rho' \in  \liftMCG\cdot\rho$ in the orbit of $\rho$ such that 
$R_g'=\langle \rho'(\alpha_g), \rho'(\beta_g)\rangle$ is not the trivial group. On the other hand, we have proven that $R_g'$ is a subgroup of the center of  $\mathrm{Im}(\rho')$. Hence $\mathrm{Im}(\rho)=\mathrm{Im}(\rho')$ is abelian. 
 \end{itemize}
 \end{proof}

\begin{lem}[Prepared form]\label{Prep}
Let $g>0$.
Let $\rho  : \gf\to \mathrm{Aff}(\mathbb{C})$ be a representation.  Assume that $\mathrm{Im}(\rho)$ is nonabelian
and $\fMCG\cdot[\rho]$ is finite.
Then up to the action of a certain element of the mapping class group and up to conjugation, $\rho$ is of the following ``prepared form'' \begin{equation}\label{eqPrep0}
\rho \left(\begin{array}{l}\alpha_g\\
\beta_g\\
\alpha_i\\
\beta_i\\
\gamma_j
\end{array}\right)=\left(\begin{array}{l}
\mu^{m_g} z\\
z+1\\
\mu^{m_i}z +a_i\\
z+b_i\\
z+c_j 
\end{array}\right)\, ,  \end{equation}
for $i\in \intint{1}{g-1}$ and  $j\in \intint{1}{n}$, where $\mu\in \mathbb{C}^*\setminus \{1\}$ is a root of unity, $m_g, m_i\in \mathbb{Z}$, $a_i, b_i, c_j\in \mathbb{C}$ and $\mu^{m_g}\neq 1$.  \end{lem}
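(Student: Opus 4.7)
The plan is to reduce $\rho$ to the prepared form by alternating $\liftMCG$-actions with conjugations in $\mathrm{Aff}(\mathbb{C})$, exploiting both the non-abelianity of $\mathrm{Im}(\rho)$ and the finite-orbit hypothesis, guided by the explicit formulas of Table~\ref{A}.

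First, by Lemma~\ref{normalize}, since $\mathrm{Im}(\rho)$ is non-abelian, there exists $\tau_0 \in \liftMCG$ such that $\langle (\tau_0 \cdot \rho)(\alpha_g), (\tau_0 \cdot \rho)(\beta_g)\rangle$ is non-abelian. Replacing $\rho$ by $\tau_0 \cdot \rho$, I henceforth assume this last pair is itself non-abelian; in particular, at least one of the linear parts $\mu_g := \overline{\rho}(\alpha_g)$ and $\mu_g' := \overline{\rho}(\beta_g)$ differs from $1$, where $\overline{\rho}:\gf \to \mathbb{C}^*$ denotes the linearization. Moreover, since affine conjugation preserves linearization, the finiteness of $\fMCG \cdot [\rho]$ forces the $\liftMCG$-orbit of $\overline{\rho}$ to be finite. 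Iterating each generator in Table~\ref{A} one at a time — the $\tau_{2i-1}, \tau_{2i}$ acting on the pair $(\mu_i, \mu_i')$, the mixing twist $\tau_{3g-1+k}$ multiplying $\mu_g$ by a power of $(\nu_1\cdots \nu_k)^{-1}\mu_g'$, and the half-twists $\sigma_k$ permuting the $\nu_j$'s — shows that each $\mu_i, \mu_i', \nu_j$ has finite multiplicative order, so all linear parts lie in a common finite cyclic subgroup $\langle \mu \rangle \subset \mathbb{C}^*$.

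Second, I reduce $\overline{\rho}$ via a further $\liftMCG$-action to normal form. The subgroup $\langle \tau_{2i-1}, \tau_{2i}\rangle$ acts on the pair $(\mu_i, \mu_i') \in \langle\mu\rangle^2$ through $\mathrm{SL}_2(\mathbb{Z})$, so each pair separately admits a Smith-type reduction to $(\mu^{m_i}, 1)$; this achieves $\overline{\rho}(\beta_i) = 1$ for every $i \in \intint{1}{g}$, and the preserved non-abelianity of $\langle \rho(\alpha_g), \rho(\beta_g)\rangle$ forces $\mu^{m_g} \neq 1$. To obtain $\nu_j = 1$ — an equality that cannot be produced by MCG alone, since Table~\ref{A} only permutes the $\nu_j$'s — I appeal to the finite-orbit hypothesis. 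Iterating the mixing twist $\tau_{3g-1+k}$ modifies $\rho(\alpha_g)$ into $\rho(\alpha_g) \rho(\Xi_k)^m$, and restoring the conjugacy class requires compensating $\mathrm{Aff}(\mathbb{C})$-conjugations; working out the conjugating translation and applying it to $\rho(\gamma_j) = \lambda_j z + c_j$ introduces a non-zero multiple of $(1-\lambda_j)$ in the translation part, which can only be absorbed into a finite orbit when $\lambda_j = 1$. Combining this analysis for varying $k$ with the half-twist permutations, I conclude $\nu_j = 1$ for every $j$.

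Finally, the non-abelianity of $\langle \rho(\alpha_g), \rho(\beta_g)\rangle$ ensures that the translation $\rho(\beta_g) = z + b$ has $b \neq 0$. Conjugating by the linear map $b\,z$ normalizes $\rho(\beta_g) = z + 1$ without disturbing the translation form of the other $\rho(\beta_i), \rho(\gamma_j)$; a subsequent conjugation by a suitable translation $z + t$ removes the translation term of $\rho(\alpha_g) = \mu^{m_g} z + a$, yielding $\rho(\alpha_g) = \mu^{m_g} z$. Since translations are fixed by translation conjugation, the remaining generators $\rho(\alpha_i), \rho(\beta_i), \rho(\gamma_j)$ automatically fall into the prescribed form, producing the prepared presentation.

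The main obstacle is the derivation $\nu_j = 1$ in the third step: because this equality is not creatable by any $\liftMCG$-action, it must be extracted from a delicate interplay between the mixing-twist orbit of $[\rho]$, affine conjugations, and the non-abelianity of the last pair — this is the step where the non-abelian hypothesis is really used, beyond what Lemma~\ref{normalize} already exploits.
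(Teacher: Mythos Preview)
Your overall structure matches the paper's: invoke Lemma~\ref{normalize} to make $\langle\rho(\alpha_g),\rho(\beta_g)\rangle$ non-abelian, use the finite-orbit hypothesis on the linear part (this is Proposition~\ref{abcase}, which you essentially reprove) to land in a finite cyclic group $\langle\mu\rangle$, apply the $\mathrm{SL}_2\mathbb{Z}$-action of $\langle\tau_{2i-1},\tau_{2i}\rangle$ on each exponent pair to kill the linear part of every $\rho(\beta_i)$, then conjugate to normalize $\rho(\alpha_g)=\mu^{m_g}z$ and $\rho(\beta_g)=z+1$. Where you diverge is in the argument that each $\rho(\gamma_j)$ has trivial linear part, and this is where your sketch has a genuine gap.

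The paper does \emph{not} use mixing twists here. After first normalizing $\rho(\alpha_g)=\mu^{m_g}z$, $\rho(\beta_g)=z+1$ (so \emph{before} treating the $\gamma_j$'s, contrary to your ordering), it iterates $\tau_{2g}$, which acts only on $\alpha_g$ via $\alpha_g\mapsto\alpha_g\beta_g$. Then $(\tau_{2g}^{-k}\cdot\rho)(\alpha_g)=\mu^{m_g}z+k\mu^{m_g}$ while $\beta_g$ and all $\gamma_j$ are untouched; renormalizing $\alpha_g$ back to $\mu^{m_g}z$ by conjugating with the translation $z+\tfrac{k\mu^{m_g}}{\mu^{m_g}-1}$ (this is precisely where $\mu^{m_g}\neq 1$ enters) sends $\rho(\gamma_j)=d_jz+c_j$ to $d_jz+c_j-k\tfrac{(d_j-1)\mu^{m_g}}{\mu^{m_g}-1}$. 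Since the pair $(\mu^{m_g}z,\,z+1)$ has trivial centralizer in $\mathrm{Aff}(\mathbb{C})$, these representations are pairwise non-conjugate as $k$ varies unless $d_j=1$.

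Your mixing-twist route runs into trouble: $\tau_{3g-1+k}$ simultaneously modifies $\beta_g$ and all $\gamma_i$ with $i\le k$, and---more seriously---it changes the \emph{linear} part of $\rho(\alpha_g)$ by the factor $(\nu_1\cdots\nu_k)^{-1}$, which at that stage is not yet known to equal $1$. Hence no ``conjugating translation'' restores $\alpha_g$ to a fixed normal form, and the clean linear-in-$m$ drift of the translation part of $\gamma_j$ that you invoke does not materialize. The idea might be salvageable with substantially more bookkeeping, but as written it does not go through; the paper's choice of $\tau_{2g}$ sidesteps all of these complications in one stroke.
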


\begin{proof} 

According to Lemma \ref{normalize}, up to the action of an element of the mapping class group, we may assume 
$\rho([\alpha_g, \beta_g])\neq \mathrm{id}$. Since $\fMCG\cdot[\rho]$ is finite, the linear part $\rho_{lin}$ of $\rho$ also has finite orbit. After Proposition \ref{abcase}, $\rho_{lin}$ takes values in a finite cyclic group $\langle \mu\rangle\subset\C^*$. Hence for each $i\in \intint{1}{g}$, we have  
$$\rho(\alpha_i)=\mu^{m_i}z+a_i, \quad \rho(\beta_i)=\mu^{n_i}z+b_i $$
   for integers $m_i, n_i\in \mathbb{Z}$ and 
complex numbers $a_i, b_i\in \mathbb{C}$.
Consider the actions of $\tau_{2i}^{-1}$ and $\tau_{2i-1}^{-1}$ on  $({m_i}, n_i)$ (the other exponents are not altered) :
$$\begin{array}{|l| lcl |}
\hline
\phantom{\begin{array}{l}\!\!\!\! \vspace{.3cm}\\ \!\!\!\! \end{array}}\tau_{2i-1}^{-1} &\begin{pmatrix}
m_i\\
n_i
\end{pmatrix}\ &\mapsto&\begin{pmatrix}
1&0\\
1&1
\end{pmatrix} \begin{pmatrix}
m_i\\
n_i
\end{pmatrix}\\ 
\hline
\phantom{\begin{array}{l}\!\!\!\!  \vspace{.3cm} \\ \!\!\!\! \end{array}} \tau_{2i }^{-1}  &\begin{pmatrix}
m_i\\
n_i
\end{pmatrix} & \mapsto& \begin{pmatrix}
1&1\\
0&1
\end{pmatrix} \begin{pmatrix}
m_i\\
n_i
\end{pmatrix}\\
\hline
\end{array}$$
These actions generate the action of $\SLtwoZ$ on $(m_i, n_i)\in \mathbb{Z}^2$. 
If $(m_i, n_i)\neq (0,0)$, then $\widetilde{m}_i:=\gcd (m_i,n_i)$ is a well-defined positive integer. 
Let $p_i$ and $q_i$ be integers such that
$p_i m_i+q_i n_i = \widetilde{m}_i.$ The matrix $$\begin{pmatrix}
p_i&q_i\\
-\frac{n_i}{\widetilde{m}_i}&\frac{m_i}{\widetilde{m}_i}
\end{pmatrix}\in \SLtwoZ$$ then sends
$(m_i,n_i)$ to $(\widetilde{m}_i, 0)$.
Hence, up to the action of a word in the twists $(\tau_{2i})_{i \in \intint{1}{g}}$, $(\tau_{2i-1})_{i \in \intint{1}{g}}$, we may assume  $n_i= 0$ for each ${i \in \intint{1}{g}}$. The property $\rho([\alpha_g, \beta_g])\neq \mathrm{id}$ is not altered by such a word, hence $\mu^{m_g}\neq 1$.
Up to conjugation by an element of $\mathrm{Aff}(\mathbb{C})$, we may moreover assume
\begin{equation}\label{RepNormal}\rho\left(  \begin{array}{l}
\alpha_g\\
\beta_g
\end{array}\right)=\left(\begin{array}{l}
\mu^{m_g}z\\
z+1
\end{array}\right).\end{equation}
Note that since $\mu^{m_g}\neq 1$, $\rho$ is the unique representative of the conjugacy class $[\rho]$ satisfying \eqref{RepNormal}.
For $j\in \intint{1}{n}$, let $c_j,  d_j \in \mathbb{C}$ be defined by 
$\rho(\gamma_j)=d_jz+c_j $.
For $k \in \mathbb{Z}$, consider the action of $\tau_2^{-k}$   :
\begin{equation}\label{normtripel}\tau_2^{-k}\cdot\rho \left(\begin{array}{l} \alpha_g\\
\beta_g\\
\gamma_j
\end{array}\right)=\left(\begin{array}{l}
\mu^{m_g} z+k\mu^{m_g}\\
z+1\\
d_jz+c_j\\
\end{array}\right)\approx \left(\begin{array}{l}
\mu^{m_g} z\\
z+1\\
d_jz+c_j-k\frac{(d_j-1)\mu^{m_g}}{\mu^{m_g}-1} 
\end{array}\right)\, .\end{equation}
Here the equivalence sign stands for being conjugated by an element of $\mathrm{Aff}(\mathbb{C})$.  
For the sequence, parametrized by $k \in \mathbb{Z}$, of normalized triples  \eqref{normtripel} to be finite, we must have $d_j=1$.
 \end{proof}

\subsection{Non-abeliancase in genus one}

\begin{prop}[Non-abelianrepresentations for $g=1$]\label{SpecialCaseg1c}
Assume    $g=1$. Let $\rho : \gf \to \mathrm{Aff}(\mathbb{C})$ be a representation with non-abelianimage (in particular $n\geq 1$). Then the orbit $\fMCG\cdot[\rho]$ is finite if and only if there is a root of unity $\mu\neq 1$ and $\mathbf{c}:=(c_1, \ldots, c_{n})\in \mathbb{C}^{n}$ with $\sum_{i=1}^nc_i=1$ such that $[\rho]\in \fMCG\cdot[\rho_{\mu, \mathbf{c}}]$, where $\rho_{\mu, \mathbf{c}}$ is the representation given by
\[ \rho_{\mu, \mathbf{c}} (\alpha_1)=\mu z\, ; ~~ \rho_{\mu, \mathbf{c}} (\beta_1)=z-\frac{1}{\mu-1}\, ;  ~~ \rho_{\mu, \mathbf{c}} (\gamma_i)=z+c_i ~~  \forall i\in \intint{1}{n}\, .\]

   \end{prop}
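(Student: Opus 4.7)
The proof splits into the two implications, with the forward direction following cleanly from the preparation lemma and the backward direction requiring a more detailed combinatorial analysis of the mapping class group action on the prepared parameters.

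Forward direction $(\Rightarrow)$. Assume $\fMCG \cdot [\rho]$ is finite with $\mathrm{Im}(\rho)$ non-abelian. By Lemma \ref{Prep}, after applying an element of $\fMCG$ and conjugating we may place $\rho$ in prepared form \eqref{eqPrep0}. In genus $g=1$ the index range $i\in\intint{1}{g-1}$ is empty, so this collapses to
\[
\rho(\alpha_1) = \mu^m z,\quad \rho(\beta_1) = z + 1,\quad \rho(\gamma_j) = z + c_j \quad (j\in \intint{1}{n}),
\]
with $\mu$ a root of unity and $\mu^m \neq 1$. A direct computation with the $\Aff(\C)$-formulas gives $\rho([\alpha_1,\beta_1]) = z + (\mu^m - 1)$; the surface-relation $[\alpha_1,\beta_1]\gamma_1 \cdots \gamma_n = 1$ then forces $\sum_j c_j = 1 - \mu^m$. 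Relabel $\mu := \mu^m$ (still a root of unity $\neq 1$) and conjugate by $\lambda z$ with $\lambda = 1 - \mu$: this fixes $\rho(\alpha_1) = \mu z$, converts $\rho(\beta_1) = z + 1$ into $z - 1/(\mu-1)$, and rescales each $c_j$ to $\tilde c_j = c_j/(1-\mu)$ with $\sum \tilde c_j = 1$. Thus $\rho$ is conjugate to $\rho_{\mu,\tilde{\mathbf c}}$, and $[\rho] \in \fMCG \cdot [\rho_{\mu, \tilde{\mathbf c}}]$.

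Backward direction $(\Leftarrow)$. Since $\fMCG$-orbits depend only on $[\rho]$, it suffices to show that $\rho_{\mu, \mathbf{c}}$ itself has finite $\fMCG$-orbit. Because $(h \cdot \rho)(\alpha) = \rho(h^{-1}(\alpha))$, every orbit representative takes values in the \emph{same} subgroup $\mathbb{A} := T \rtimes \langle \mu \rangle \subset \Aff(\C)$, where $T \subset (\C, +)$ is the $\Z[\mu]$-submodule spanned by $-1/(\mu-1)$ and $c_1, \ldots, c_n$. As $\Z[\mu]$ is a finitely generated abelian group, so is $T$. Next, the scalar character $(h \cdot \rho)_{\C^*}$ factors through $H_1(\Sigma_1 \setminus Y^n)$, on which $\fMCG$ acts via $\mathrm{SL}_2(\Z/N) \times \mathfrak{S}_n$ (with $N := \mathrm{ord}(\mu)$), so the linear-part character ranges over a finite set along the orbit; moreover, because the conjugacy classes of the $\gamma_i$ in $\gf$ are permuted by $\fMCG$, we always have $(h\cdot \rho)_{\C^*}(\gamma_i) = 1$, i.e.\ $(h\cdot \rho)(\gamma_i)$ is a translation.

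The main obstacle is to bound the orbit within each fixed scalar character. Using the explicit formulas of Table \ref{A}, one computes directly the action of each generator of $\liftMCG$ on a representation in prepared form and then re-normalizes to the shape $\rho_{\mu', \mathbf{c}'}$ via the conjugation of the forward direction: the braids $\sigma_k$ permute the $\tilde c_i$; the twists $\tau_1, \tau_2$ implement $\mathrm{SL}_2(\Z/N)$ on the exponents of $\mu$; and the mixing twists $\tau_{2+k}$ modify the $\tilde c_i$ by $\Z[\mu]$-multiples of existing elements of $T$. The resulting parameter tuple is therefore confined to a finite $\Z[\mu]$-lattice, so the orbit is finite. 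The systematic execution of this bookkeeping, which also produces the explicit cardinality bounds of Theorem \ref{mainthm dynamics}, is carried out in Section \ref{SecCalcul}; granting it, finiteness of $\fMCG \cdot [\rho_{\mu, \mathbf{c}}]$ follows.
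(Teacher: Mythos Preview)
Your forward direction is essentially the paper's argument (modulo a harmless slip: the normalizing conjugation should be by $\lambda z$ with $\lambda=-1/(\mu-1)$, not $\lambda=1-\mu$, since $(\lambda z)\circ(z+1)\circ(\lambda z)^{-1}=z+\lambda$).

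The backward direction, however, has a genuine gap. Your finiteness heuristic ``the resulting parameter tuple is confined to a finite $\Z[\mu]$-lattice'' does not hold as stated: a finitely generated $\Z[\mu]$-submodule $T\subset\C$ is infinite, so merely observing that the translation parts stay in $T$ proves nothing. Your description of the mixing twists as ``modifying the $\tilde c_i$ by $\Z[\mu]$-multiples of existing elements'' suggests additive shifts, which would indeed escape any finite set. What actually happens (and what the paper's proof verifies by direct computation) is that the mixing twists only \emph{multiply} certain $\tilde c_j$ by powers of $\mu$, and the Dehn twists $\tau_1,\tau_2$ leave the $\tilde c_i$ untouched after conjugating by a translation. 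This multiplicative-only behavior is exactly what confines the $\tilde c_i$ to the finite set $\bigcup_\sigma\prod_i\mu^{\Z}c_{\sigma(i)}$, and it does not follow from the coarse observation that everything lives in $\Z[\mu]\cdot T$.

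Your appeal to Section~\ref{SecCalcul} to close this gap is circular: Proposition~\ref{lemcomptermuc} there explicitly invokes ``a slight refinement of the proof of Proposition~\ref{SpecialCaseg1c}'' for the key inclusion $\MCG\cdot[\rho_{\mu,\mathbf c}]\subset[O^{pure}_{\mu,\mathbf c}]$. The actual content is the construction of the explicit finite invariant set $O_{\mu,\mathbf c}=\bigcup_d S_{\mu,d}\times R_{\mu,\mathbf c,d}$ and the case-by-case verification that each generator of $\liftMCG$ preserves $[O_{\mu,\mathbf c}]$; that verification is the proof, not bookkeeping that can be deferred.
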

 
\begin{proof}
 Recall that for $g=1$,  the fundamental group $\gf$ has the following presentation
  $$\gf=\langle \alpha_1, \beta_1, \gamma_1, \ldots, \gamma_{n}~|~\gamma_1\cdots \gamma_n= [\alpha_1, \beta_1]\rangle\, ,$$
 and the mapping class group $\fMCG $ is generated by the elements of Table $\ref{table genus 1}$.   
 \begin{table}[htbp]
  $$\begin{array}{|ll| rcl l|}
\hline
\tau_{1} &  &  \beta_1 & \mapsto&\beta_1 \alpha_1&\\
\hline
\tau_{2} &  &  \alpha_1 & \mapsto& \alpha_1 \beta_1&\\\hline
\widetilde{\tau}_{2+k}:=\tau_2^{-1}\circ {\tau}_{2+k}\, ,&k\in \intint{1}{n-1}  & \alpha_1 & \mapsto& \alpha_1\beta_1^{-1} \Xi_k &\\
&&\beta_1 & \mapsto& \Xi_k^{-1}\beta_1\Xi_k&\\
&&\gamma_i & \mapsto& \Xi_k^{-1} \gamma_i \Xi_k\,,& i\in \intint{1}{k}\\
 & &where&  \Xi_k =&(\gamma_1\ldots \gamma_k)^{-1}\beta_1&\\
\hline
 \sigma_{i}\,, & i\in \intint{1}{n-1} &\gamma_i & \mapsto& \gamma_i\gamma_{i+1}\gamma_i^{-1}& \\
  &&\gamma_{i+1}&\mapsto& \gamma_i&\\ 
 \hline
\end{array}$$
\caption{\label{table genus 1}}
\end{table}
 Assume $[\rho]$ has finite orbit, then by Lemma \ref{Prep}, we have $\fMCG \cdot [\rho]=\fMCG \cdot [\rho_{\mu , \mathbf{c}}]$ for a convenient choice of $\mathbf{c}\in \mathbb{C}^n$ and a root of unity $\mu\neq 1$. 
 Let us now prove that $[\rho_{\mu, \mathbf{c}}]$ has finite orbit. Denote $$N:=\mathrm{order}(\mu)\, ; \quad \quad D_{\mathbf{c}}:=\mu^{\mathbb{Z}}c_1+\ldots+\mu^{\mathbb{Z}}c_n\, .$$
Denote the following sets of tuples of affine transformations  
$$\begin{array}{rcl}S_{\mu,d}^1&:=&\left\{ \left(\begin{array}{l} \mu^{k_1}z\\\mu^{k_2}z-\frac{d}{\mu^{k_1}-1}\end{array}\right)  ~\middle|~\begin{array}{l}k_1, k_2 \in\mathbb{Z}, \quad  k_1\not \in N\mathbb{Z},\vspace{.1cm}\\
\gcd (k_1,k_2,N)=1  \end{array}\right\}
\vspace{.3cm}\\
S_{\mu, d}^2&:=&\left\{ \left(\begin{array}{l} \mu^{k_1}z+\frac{d}{\mu^{k_2}-1}\\\mu^{k_2}z\end{array}\right)  ~\middle|~\begin{array}{l}k_1, k_2 \in\mathbb{Z},\quad 
  k_2\not \in N\mathbb{Z},\vspace{.1cm}\\ \gcd (k_1,k_2,N)=1 \end{array}\right\}
 \vspace{.3cm} \\
 R_{\mu,\mathbf{c}, d}&:=&\left\{ \left(\begin{array}{c} z+\tilde c_1 \\ \vdots \\ z+\tilde c_n \end{array}\right)  ~\middle|~\begin{array}{l}
(\hat{c}_1, \ldots, \hat{c}_n)\, \in \, \mathfrak{S}_n\cdot (c_1, \ldots, c_n), \vspace{.1cm}
 \\
  \tilde{c}_i\in \mu^\mathbb{Z} \hat c_i\quad \quad  \forall i\in \intint{1}{n},\vspace{.1cm}
  \\
   d=\sum_{i=1}^n \tilde c_i
 \end{array}\right\}\, .\end{array}$$
   Moreover, we set $S_{\mu, d}:=S_{\mu, d}^1\cup S_{\mu,d}^2 $. Then by definition, we have $$\rho_{\mu, \mathbf{c}} \left(\begin{array}{l} \alpha_1\\\beta_1\\\gamma_1 \\ \vdots \\\gamma_n
\end{array}\right)\in  O_{\mu,\mathbf{c}}:= \bigcup_{d\in D_{\mathbf{c}}}
\left\{
\left(
\begin{array}{l}
\varphi_\alpha\\
\varphi_\beta \\
 \varphi_{1}\\ 
 \vdots \\
 \varphi_{n} 
 \end{array}
 \right)
 ~\middle|~
 \begin{array}{l}
  \left(\begin{array}{rcl}\varphi_\alpha\\\varphi_\beta   \end{array}\right)
   \in S_{\mu,d} 
    \, , \quad
      \left(\begin{array}{l} \varphi_{1}\\ \vdots \\\varphi_{n} \end{array}\right)
       \in R_{\mu,\mathbf{c},d} \end{array} \right\}
\, .$$ Note that $O_{\mu , \mathbf{c} }$ is a finite set, and we will prove that each conjugacy class in the orbit of $\rho_{\mu, \mathbf{c}}$ under the action of the mapping class group has a representative in $O_{\mu , \mathbf{c} }$. We shall denote $[O_{\mu , \mathbf{c} }]$ the image of $O_{\mu , \mathbf{c} }$ in $\chign{\Aff}$.
 \begin{itemize}
 \item \emph{The set $[O_{\mu , \mathbf{c}}]$  is stable under the inverses of   $\tau_1$ and $\tau_2$.}\\
In order to prove this first assertion, it is enough to prove that the sets $ S_{\mu,d}^1$ and $S_{\mu,d}^2$ are stable under the action of $\tau_1^{-1}$ and $\tau_2^{-1}$ modulo conjugation by translations.  Let $$\rho\left(\begin{array}{l} \alpha_1\\\beta_1 \end{array}\right)= \left(\begin{array}{l} \mu^{k_1}z\\\mu^{k_2}z-\frac{d}{\mu^{k_1}-1}\end{array}\right) \in S_{\mu,d}^1.$$ Then 
 $$\tau_1^{-1} \cdot\rho\left(\begin{array}{l} \alpha_1\\\beta_1 \end{array}\right)= \left(\begin{array}{l} \mu^{k_1}z\\\mu^{k_1+k_2}z-\frac{d}{\mu^{k_1}-1}  \end{array}\right) \in S_{\mu, d}^1$$
and 
 $$\tau_2^{-1} \cdot\rho\left(\begin{array}{l} \alpha_1\\\beta_1 \end{array}\right) = \left(\begin{array}{l} \mu^{k_1+k_2}z
 -\frac{\mu^{k_1}d}{\mu^{k_1}-1}\\\mu^{k_2}z-\frac{d}{\mu^{k_1}-1} \\
  \end{array}\right) \, .$$
To see that, up to conjugation by a translation, the latter image also belongs to $S_{\mu,d}$, we need to distinguish two cases. Firstly, if $k_1+k_2\in N\mathbb{Z}$, then $k_2\not \in N\mathbb{Z}$ and we obtain 
$$ \left(\begin{array}{l} \mu^{k_1+k_2}z
 -\frac{\mu^{k_1}d}{\mu^{k_1}-1}\\\mu^{k_2}z-\frac{d}{\mu^{k_1}-1} \\
  \end{array}\right)= \left(\begin{array}{l}  z
 +\frac{d}{\mu^{k_2}-1}\\\mu^{k_2}z-\frac{d}{\mu^{k_1}-1} \\
  \end{array}\right)\approx \left(\begin{array}{l}  z+\frac{d}{\mu^{k_2}-1}\\\mu^{k_2}z  
 \end{array}\right) \in S_{\mu , d}^2$$
Secondly, if $k_1+k_2\not \in  N\mathbb{Z}$, then we obtain 
$$ \left(\begin{array}{l} \mu^{k_1+k_2}z
 -\frac{\mu^{k_1}d}{\mu^{k_1}-1}\\\mu^{k_2}z-\frac{d}{\mu^{k_1}-1} \\
  \end{array}\right) \approx 
 \left(\begin{array}{l} \mu^{k_1+k_2}z\\\mu^{k_2}z -\frac{d}{\mu^{k_1+k_2}-1} \\
\end{array}\right)  \in S_{\mu, d }^1$$
In a similar way, one can show that up to conjugation by translations, we have $\tau_1^{-1}\cdot S_{\mu, d}^2\subset S_{\mu, d}$ and $\tau_2^{-1}\cdot S_\mu^2\subset   S_\mu^2$.     
\vspace{.2cm}
 \item \emph{The set $[O_{\mu,\mathbf{c}}]$  is stable under the inverses of $\sigma_1, \ldots, \sigma_{n-1}$.} \\
 Indeed,  for every $\rho\in O_{\mu,\mathbf{c}}$, the group
 $\langle \rho(\gamma_1), \ldots, \rho(\gamma_n)\rangle$ is a translation group. In particular, it is abelian. Hence the elements $\sigma_i$ act as permutations. But permutations stabilize the set $R_{\mu,\mathbf{c}, d}$.
 \vspace{.2cm}
  \item\emph{The set $[O_{\mu,\mathbf{c}}]$  is stable  under the inverse of the modified mixing twist  $\widetilde{\tau}_{2+k}$ for every $k\in \intint{1}{n-1}$.} \\
Note that for $k\in \intint{1}{n-1}$, up to a common conjugation by $\rho(\Xi_k)$, the representation $\rho':=\widetilde{\tau}_{2+k}^{-1}\cdot\rho $ may be described as follows, where $\Xi_k = (\gamma_1\ldots \gamma_k)^{-1} \beta_1$.
 \[
 \left \lbrace \begin{array}{lr}
 \rho'(\alpha_1)=\rho(\Xi_k\alpha_1 \beta_1^{-1} )~~&\\
\rho'(\beta_1)=\rho(  \beta_1 )~&\\
  \rho'(\gamma_i)= \rho(\gamma_i)~~& i\in \intint{1}{k}; \\
\rho'(\gamma_j)=\rho(\Xi_k \gamma_j\Xi_k^{-1})~~& j\in \intint{k+1}{n}.\\
 \end{array}
 \right.
 \]
  In the following calculations, $i$ represents an index less or equal to $k$ (if such an index exists) and $j$ represents an index greater than $k$.\\
   Assume first that $ \rho\left(   {\alpha_1}\, ,   {\beta_1}  \right)   \in S_{\mu,d}^1.$ Then
 $$\rho\left(\begin{array}{l}   {\alpha_1}\\\,   {\beta_1} \\ \gamma_i \\ \gamma_j \end{array}\right)=   \left(\begin{array}{l} \mu^{k_1}z\\\mu^{k_2}z-\frac{d}{\mu^{k_1}-1}\\z+\tilde c_i\\z+\tilde c_j\end{array}\right)\quad\textrm{and}\quad \rho(\Xi_k) = \mu^{k_2}z-\frac{d}{\mu^{k_1}-1}-\sum_{i=1}^k \tilde c_i\, . $$  Hence
   $$   \rho'\left(\begin{array}{l} 
     {\alpha_1}\vspace{.1cm}\\\,   {\beta_1}\vspace{.1cm} \\ \gamma_i \vspace{.1cm}\\ \gamma_j \end{array}\right)= \left(\begin{array}{l}
 \mu^{k_1}z+d-\sum_{h=1}^k \tilde c_h
    \vspace{.1cm}
   \\\mu^{k_2}z-\frac{d}{\mu^{k_1}-1}\vspace{.1cm}\\z+\tilde c_i\vspace{.1cm}\\z+\mu^{k_2}\tilde c_j\end{array}\right)
  \approx 
  \left(\begin{array}{l}
  \mu^{k_1}z   
    \vspace{.1cm}
   \\\mu^{k_2}z 
    -\frac{d'}{\mu^{k_1}-1} 
   \vspace{.1cm}\\z+\tilde c_i\vspace{.1cm}\\z+\mu^{k_2}\tilde c_j\end{array}\right)\, , 
  $$
  where $$d'=\mu^{k_2}d-(\mu^{k_2}-1)\sum_{i=1}^k \tilde c_i=\sum_{i=1}^k \tilde c_i+ \sum_{j=k+1}^n \mu^{k_2}\tilde c_j\, ,$$
  since $d=\sum_{i=1}^k\tilde c_i+\sum_{j=k+1}^n \tilde c_j$. 
  In other words,  up to conjugation by a translation, we have $\rho' \in O_{\mu, \mathbf{c}}$. 
   By an almost identical argumentation, we show that if $\rho \in  O_{\mu, \mathbf{c}}$ with $ \rho\left(   {\alpha_1}\, ,   {\beta_1}  \right)   \in S_{\mu, d}^2$, then $\widetilde{\tau}_{2+k}^{-1}\cdot\rho$ is also in  $O_{\mu, \mathbf{c}}$ modulo conjugation. \\
  \end{itemize} 
Since every element of $\liftMCG$ induces a bijection of $\chign{\mathrm{Aff}(\mathbb{C})}$ and we have proven that $[O_{\mu, \mathbf{c}}]$ is stable under $\tau_i^{-1}$ for every  $i\in \intint{1}{n+1}$ and  $\sigma_j^{-1}$  for every  $j\in \intint{1}{n-1}$, these generators of  $\liftMCG$  induce bijections of $$[O_{\mu, \mathbf{c}}]\subset \chign{\mathrm{Aff}(\mathbb{C})}\, .$$ Hence $[O_{\mu, \mathbf{c}}]$ is also stable under $\tau_i$ for every  $i\in \intint{1}{n+1}$ and  $\sigma_j$  for every  $j\in \intint{1}{n-1}$. We conclude that the orbit $\fMCG\cdot [\rho_{\mu, \mathbf{c}}]=\liftMCG\cdot [\rho_{\mu, \mathbf{c}}]$  is contained in the finite set $[O_{\mu, \mathbf{c}}]$.  
\end{proof}

\subsection{Non-abeliancase in higher genus}
We are now considering the case $g>1$, and arbitrary $n\geq 0$.
Recall that $\gf$ then contains the group
$$G:=\langle \alpha_{g-1}, \beta_{g-1}, \alpha_g, \beta_g \rangle \subset \gf$$
and $\liftMCG$ contains a subgroup
 $$H:=\langle \tau_{2g-3}, \tau_{2g-2}, \tau_{2g-1}, \tau_{2g}, \tau_{3g-1}\rangle \subset \liftMCG\, $$
 acting on $G$ as summarized by Table~$\ref{DehnAction1sous}$. Here, as usual, we only indicate the effect of the generators of $H$ on those generators of $G$ which are not left invariant by the element of $H$ under consideration.  \begin{table}[htbp]
$\begin{array}{|ll| lcl |}
\hline
\tau_{2k} &k\in\intint{g-1}{g} & \alpha_k & \mapsto& \alpha_k \beta_k\\
\hline
\tau_{2k-1} & k\in\intint{g-1}{g} & \beta_k & \mapsto& \beta_k\alpha_k\\
\hline
\tau_{3g-1}
&&\alpha_{g}&\mapsto& \Theta^{-1} \alpha_{g}  \\
&  &  \alpha_{g-1} &\mapsto& \alpha_{g-1}\Theta\\
&&\beta_{g-1}&\mapsto& \Theta^{-1}\beta_{g-1}\Theta \\
&&where&\Theta=&\alpha_{g}\beta_{g}^{-1}\alpha_{g}^{-1}\beta_{g-1}\\
\hline
\end{array}
$\\$ $
\\
\caption{\label{DehnAction1sous}}
\end{table}

\begin{lem}[Elimination criterion]\label{Prep0}
Let $g\geq 2$.
Let $\rho  : \gf \to \mathrm{Aff}(\mathbb{C})$ be a representation of the following ``weak prepared form''
   \begin{equation}\label{eqPrep1}
\rho \left(\begin{array}{l}\alpha_g\\
\beta_g\\
\alpha_{g-1}\\
\beta_{g-1}\\
\end{array}\right)=\left(\begin{array}{l}
\mu^{m_g} z\\
z+1\\
\mu^{m_{g-1}}z +a\\
z+b\\
\end{array}\right)\, , \end{equation}
where $\mu$ is a root of unity, $a, b\in \mathbb{C}$, $m_g, m_{g-1}\in \mathbb{Z}$ and $\mu^{m_g}\neq 1$. If  $\fMCG\cdot[\rho]$ is finite, then the conditions of Table~$\ref{elimcrit}$ are fulfilled. 
\vspace{-.3cm}
\begin{table}[htbp]
$$\fbox{$\begin{array}{lcl}
 \mu^{m_{g-1}} &=& \frac{1}{\mu^{m_g}}\\
a&=&0\\
b&=&0
\end{array}$}$$
 \caption{\label{elimcrit}}
\end{table}
 \end{lem}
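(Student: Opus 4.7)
I will derive each of the three conditions of Table~\ref{elimcrit} by iterating a single generator of $\liftMCG$ from Table~\ref{A} and demanding that the resulting $\mathbb{Z}$-orbit of normalized representatives be finite. The bookkeeping is enabled by the fact that in the weak prepared form, each $\mathrm{Aff}(\mathbb{C})$-conjugacy class admits a unique representative: any $\mathrm{Aff}(\mathbb{C})$-conjugation preserving both $\rho(\alpha_g)=\mu^{m_g}z$ and $\rho(\beta_g)=z+1$ lies in their common centralizer, which is trivial because $\mu^{m_g}\neq 1$. Thus $a$, $b$, $m_g$, $m_{g-1}$ are conjugacy invariants of the $4$-tuple $\bigl(\rho(\alpha_g),\rho(\beta_g),\rho(\alpha_{g-1}),\rho(\beta_{g-1})\bigr)$, and for each twist it suffices to track the linear-in-$N$ term in the normalized translation of $\rho(\alpha_{g-1})$ or $\rho(\beta_{g-1})$.

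\paragraph{Steps 1 and 2.}
Iterating $\tau_{2g-2}$, which acts on $\gf$ by $\alpha_{g-1}\mapsto\alpha_{g-1}\beta_{g-1}$ and fixes the three other preferred generators, one finds $(\tau_{2g-2}^{N}\cdot\rho)(\alpha_{g-1})(z)=\mu^{m_{g-1}}z+a-Nb\mu^{m_{g-1}}$, still in weak prepared form; finiteness therefore forces $b=0$. With $b=0$, a direct computation gives $\rho(\Theta)(z)=z-\mu^{m_g}$ for $\Theta=\alpha_g\beta_g^{-1}\alpha_g^{-1}\beta_{g-1}$, a pure translation. Iterating $\tau_{3g-1}$ (which fixes $\Theta$ and hence acts as $\alpha_g\mapsto\Theta^{-N}\alpha_g$, $\alpha_{g-1}\mapsto\alpha_{g-1}\Theta^{N}$, $\beta_{g-1}\mapsto\Theta^{-N}\beta_{g-1}\Theta^{N}$) and conjugating by the unique translation $\sigma(z)=z+N\mu^{m_g}/(1-\mu^{m_g})$ that restores $\rho(\alpha_g)=\mu^{m_g}z$, the normalized $\alpha_{g-1}$-translation becomes $a+N\mu^{m_g}(1-\mu^{m_{g-1}+m_g})/(1-\mu^{m_g})$; as $\mu^{m_g}\notin\{0,1\}$, finiteness yields $\mu^{m_{g-1}+m_g}=1$, i.e.\ $\mu^{m_{g-1}}=1/\mu^{m_g}$.

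\paragraph{Step 3 and the main obstacle.}
With $b=0$ and $\mu^{m_{g-1}+m_g}=1$ now enforced, both $\tau_{2g-2}$ and $\tau_{3g-1}$ act trivially on the normalized $4$-tuple, so the condition $a=0$ has to be extracted from a further iteration. Applying the same normalization procedure to $\tau_{2g}^{N}$ (which sends $\alpha_g\mapsto\alpha_g\beta_g$) yields a normalized $\rho(\alpha_{g-1})$-translation equal to $a-N$; the coefficient of $N$ is $-1$, independent of $a$, so this $\mathbb{Z}$-family must be collapsed by pairing the shift with a companion iteration in $\liftMCG$---a natural candidate is $\tau_{2g-3}$, or a mixing twist $\tau_{3g-1+k}$ when $n\geq 1$---whose normalized action produces a compensating linear-in-$N$ term proportional to $a$. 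Finiteness of the resulting two-parameter family of weak-prepared-form representatives then pins $a=0$. Identifying this compatible short word and verifying that its normalized action really produces an $a$-dependent shift is the main technical obstacle of the lemma; it is also what justifies calling the conditions of Table~\ref{elimcrit} an ``elimination criterion''.
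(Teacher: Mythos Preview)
Your Steps~1 and~2 are correct and coincide with the paper's argument: the iteration of $\tau_{2g-2}$ forces $b=0$, and then the iteration of $\tau_{3g-1}$ forces $\mu^{m_{g-1}+m_g}=1$.

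Step~3, however, is left incomplete --- and your proposed strategy for completing it is misguided. Your computation for $\tau_{2g}^{N}$ is correct: after normalization, the $\alpha_{g-1}$-translation becomes $a\pm N$. But you then say this infinite $\mathbb{Z}$-family ``must be collapsed by pairing the shift with a companion iteration.'' There is no such mechanism: the $\langle\tau_{2g}\rangle$-suborbit is \emph{contained} in the full orbit $\fMCG\cdot[\rho]$, so if the former is infinite, so is the latter. Since you yourself established (correctly, in your Plan) that the weak prepared form determines the conjugacy class uniquely, the representations with $\alpha_{g-1}$-translation $a+N$ for distinct $N$ are pairwise non-conjugate. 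Hence, once $b=0$ and $\mu^{m_{g-1}+m_g}=1$ hold, the orbit $\fMCG\cdot[\rho]$ is automatically infinite. This shows the hypothesis of the lemma is never satisfied, so the conclusion $a=0$ holds vacuously. You had a complete proof in hand and did not see it; in fact this shortcut directly yields Proposition~\ref{NABCaseg2} without further work.

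For comparison, the paper does not use $\tau_{2g}$ for Step~3 at all. It instead introduces the conjugated twist $\tilde{\tau}_{3g-1}:=\tau^{-1}\circ\tau_{3g-1}\circ\tau$ with $\tau:=\tau_{2g-3}\circ\tau_{2g}\circ\tau_{2g-1}^{-1}\circ\tau_{2g}$, computes that its $k$-th iterate produces (after normalization) a $\beta_{g-1}$-translation $-\tfrac{ka\,\mu^{m_g}}{1-ka}$, and reads off $a=0$ from the requirement that this take finitely many values. The paper then exploits all three conditions of Table~\ref{elimcrit} in the proof of Proposition~\ref{NABCaseg2} by exhibiting an element of $\liftMCG$ that sends a representation satisfying them to one that violates them. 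Your unrecognized shortcut is more economical than this.
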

 \vspace{-.6cm}
 \begin{proof}
Note that if two representations $\rho, \rho'$ of the form \eqref{eqPrep1} are conjugated, then they their restrictions to $G$ are equal. Assume $$\rho\left(\begin{array}{l}
\alpha_{g}\\
\beta_g\\
\alpha_{g-1}\\
\beta_{g-1}
\end{array}\right)= \left(\begin{array}{l}
\mu^{m_{g}} z\\
z+1\\
\mu^{m_{g-1}}z +a\\
z+b
\end{array}\right)$$
Now consider the action of $\tau_{2g-2}^{-k}$ for $k\in \mathbb{Z}$:  
$$\tau_{2g-2}^{-k}\cdot \rho\left(\begin{array}{l}
\alpha_{g}\\
\beta_g\\
\alpha_{g-1}\\
\beta_{g-1}
\end{array}\right)=     \left(\begin{array}{l}
\mu^{m_{g}} z\\
z+1\\
\mu^{m_{g-1}}z +a+k\cdot \mu^{m_{g-1}}b\\
z+b
\end{array}\right)$$
Since the suborbit  $(\tau_{2g-2}^{-k}\cdot [\rho])_k$ is supposed to take finitely many values, we have
$\fbox{$b=0$} .$\\
Now consider the action of $\tau_{3g-1}^{-k}$.  We have
$$\begin{array}{rlllllll}
\tau_{3g-1}^{-k}\cdot \rho\left(\begin{array}{l}
\alpha_{g}\\
\beta_g\\
\alpha_{g-1}\\
\beta_{g-1}
\end{array}\right)= \left(\begin{array}{l}
\mu^{m_{g}} z+k\mu^{m_{g}}\\
z+1\\
\mu^{m_{g-1}}z +a-k\mu^{m_g+m_{g-1}}\\
z
\end{array}\right)\approx   \left(\begin{array}{l}
\mu^{m_{g}} z \\
z+1\\
\mu^{m_{g-1}}z +a-k\cdot \frac{\mu^{2m_g+m_{g-1}}- \mu^{m_{g}}}{\mu^{m_{g}}-1}\\
z
\end{array}\right).\end{array}$$
As the corresponding suborbit is supposed to be finite, we have $\fbox{$\mu^{m_{g-1}}=\mu^{-m_g}$} .$\\
 In order to conclude,  consider  $\tilde{\tau}_{3g-1}=\tau^{-1}\circ \tau_{3g-1}\circ  \tau$, where
 $\tau:= \tau_{2g-3}\circ\tau_{2g}\circ \tau_{2g-1}^{-1}\circ \tau_{2g} $.
 We have 
 $$  {\tilde{\tau}_{3g-1}^k}_*:\left(\begin{array}{l}
\alpha_g\vspace{.1cm}\\
\beta_g\vspace{.1cm}\\
\alpha_{g-1}\vspace{.1cm}\\
\beta_{g-1}
\end{array}\right)\stackrel{  }{\mapsto}\left(\begin{array}{l}
\widetilde{\Theta}^{-k} \alpha_g\widetilde{\Theta}^{k}\vspace{.1cm}\\
\beta_g\widetilde{\Theta}^{k}\vspace{.1cm}\\
\alpha_{g-1}\widetilde{\Theta}^{k}\vspace{.1cm}\\
\widetilde{\Theta}^{-k}\beta_{g-1}\alpha_{g-1}^{-1}\widetilde{\Theta}^{k}\alpha_{g-1}\widetilde{\Theta}^{k}
\end{array}\right), $$
  where $\widetilde{\Theta}:=\tau^{-1}_*{\Theta}=\alpha_g^{-1}\beta_{g-1}\alpha_{g-1}^{-1}$.
  We have $$\rho\left(\widetilde{\Theta}^k\right)=z-k\cdot a\, .$$
  Hence, modulo conjugation by $\rho\left(\widetilde{\Theta}^k\right)$, we have
  $$ \tilde{\tau}_{3g-1}^{-k}\cdot \rho\left(\begin{array}{l}
\alpha_{g}\vspace{.1cm}\\
\beta_g\vspace{.1cm}\\
\alpha_{g-1}\vspace{.1cm}\\
\beta_{g-1}
\end{array}\right)\approx \rho \left(\begin{array}{l}
\alpha_g\vspace{.1cm}\\
\widetilde{\Theta}^k\beta_g\vspace{.1cm}\\
\widetilde{\Theta}^{k}\alpha_{g-1}\vspace{.1cm}\\
\beta_{g-1}\alpha_{g-1}^{-1}\widetilde{\Theta}^{k}\alpha_{g-1}
\end{array}\right) 
= \left(\begin{array}{l}
\mu^{m_{g}} z\vspace{.1cm}\\
z+1-k\cdot a\vspace{.1cm}\\
\frac{1}{\mu^{m_{g}}}z+(1-k)\cdot  a \vspace{.1cm}\\
  z-k\cdot  a\mu^{m_{g}} 
\end{array}\right) .$$
Provided $1-k\cdot a\neq 0$ (which is the case for an infinite number of $k\in \mathbb{Z}$ anyway), we obtain
$$\tilde{\tau}_{3g-1}^{-k}\cdot \rho\left(\begin{array}{l}
\alpha_{g}\vspace{.1cm}\\
\beta_g\vspace{.1cm}\\
\alpha_{g-1}\vspace{.1cm}\\
\beta_{g-1}
\end{array}\right)\approx \left(\begin{array}{l}
\mu^{m_{g}} z\vspace{.1cm}\\
z+1\vspace{.1cm}\\
\frac{1}{\mu^{m_{g}}}z+\frac{(1-k)\cdot  a}{1-k\cdot a} \vspace{.1cm}\\
  z-\frac{k\cdot  a}{1-k\cdot a}\mu^{m_{g}}
\end{array}\right).$$
Again by finiteness,  we have $\fbox{$a=0$} .$   \end{proof}

  \begin{prop}[Non-abelianrepresentations for $g>1$]\label{NABCaseg2}
Assume $g\geq 2$ and $n\geq 0$. Let $\rho : \gf \to \mathrm{Aff}(\mathbb{C})$ be a  representation with non-abelianimage. Then the orbit $\fMCG\cdot [\rho]$ is infinite. 
   \end{prop}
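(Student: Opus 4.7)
The plan is to argue by contradiction, exhibiting two different representatives of the same orbit which are both in weak prepared form but whose parameters cannot simultaneously satisfy Lemma $\ref{Prep0}$. Assume $\rho$ has non-abelian image and finite $\fMCG$-orbit. By Lemma $\ref{Prep}$, after replacing $\rho$ with an element of its orbit and conjugating in $\Aff(\C)$, we may assume $\rho$ is in the prepared form $(\ref{eqPrep0})$. The restriction of $\rho$ to $\langle\alpha_{g-1},\beta_{g-1},\alpha_g,\beta_g\rangle$ is then in the weak prepared form $(\ref{eqPrep1})$, so Lemma $\ref{Prep0}$ forces
$$a_{g-1}=b_{g-1}=0,\qquad \mu^{m_{g-1}}=\mu^{-m_g};$$
equivalently, $\rho(\alpha_{g-1})=\mu^{-m_g}z$ and $\rho(\beta_{g-1})=z$.

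Next I produce a second representative $\rho''$ of the orbit, still in weak prepared form, whose translation part in $\alpha_{g-1}$ is nonzero. Applying the inverse Dehn twist $\tau_{2g}^{-1}$, which sends $\alpha_g$ to $\alpha_g\beta_g$, a direct composition yields $(\tau_{2g}^{-1}\cdot\rho)(\alpha_g)=\mu^{m_g}z+\mu^{m_g}$, while the images of $\beta_g$, $\alpha_{g-1}$, $\beta_{g-1}$ are left unchanged. Since $\mu^{m_g}\neq 1$, we may restore the normalization $\rho''(\alpha_g)=\mu^{m_g}z$ by conjugating in $\Aff(\C)$ by the translation $z+c$ with $c:=\mu^{m_g}/(\mu^{m_g}-1)$. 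Translation conjugations commute with any translation, so they leave $\rho(\beta_g)=z+1$ and $\rho(\beta_{g-1})=z$ unchanged; and using $(z+c)(az+b)(z+c)^{-1}=az+b-c(a-1)$ together with the elementary simplification $c(1-\mu^{-m_g})=(\mu^{m_g}-1)/(\mu^{m_g}-1)=1$, one finds
$$\rho''(\alpha_{g-1})=\mu^{-m_g}z+1.$$
Thus $\rho''$ belongs to the weak prepared form $(\ref{eqPrep1})$, with exponent $m'_{g-1}=-m_g$, translation part $b'=0$, but translation part $a'=1\neq 0$.

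The conjugacy class $[\rho'']$ lies in $\fMCG\cdot[\rho]$, which is finite by hypothesis, so Lemma $\ref{Prep0}$ applies to $\rho''$ and forces $a'=0$, contradicting $a'=1$. This completes the proof. The main obstacle and the reason why the hypothesis $g\geq 2$ is essential is the very existence of a handle $g-1$ which, through its generator $\alpha_{g-1}$, serves as a witness registering the twist $\tau_{2g}$ performed on $\alpha_g$: for $g=1$ there is no such $\alpha_{g-1}$ to record the mismatch, and indeed Proposition $\ref{SpecialCaseg1c}$ produces genuine finite non-abelian orbits in that case.
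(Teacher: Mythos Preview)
Your proof is correct and follows the same overall strategy as the paper: reduce to prepared form via Lemma~\ref{Prep}, apply the elimination criterion of Lemma~\ref{Prep0}, then exhibit another element of the orbit still in weak prepared form but with $a\neq 0$, yielding a contradiction.

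The difference lies in how the second representative is produced. The paper applies $\tau_{2g}\circ\tau_{3g-1}^{-1}$, obtaining directly (without any conjugation) the weak prepared form with $a=-1$; the inter-handle twist $\tau_{3g-1}$ is what carries the perturbation from the $g$-th handle over to $\alpha_{g-1}$. You instead apply only $\tau_{2g}^{-1}$ and then restore the normalization $\rho''(\alpha_g)=\mu^{m_g}z$ by conjugating with the translation $z+\mu^{m_g}/(\mu^{m_g}-1)$; this conjugation propagates a nonzero constant to $\alpha_{g-1}$ precisely because $\rho(\alpha_{g-1})=\mu^{-m_g}z$ has nontrivial linear part (itself forced by the first application of Lemma~\ref{Prep0}). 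Your route is arguably more elementary since it avoids $\tau_{3g-1}$ altogether, at the cost of an explicit conjugation; the paper's route needs one more twist but lands directly in weak prepared form. Both arguments make essential and identical use of the hypothesis $g\geq 2$ through the presence of $\alpha_{g-1}$.
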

   \begin{proof}
   Let $g\geq 2$ and let $\rho$ be a representation with finite orbit modulo conjugation.
Let us assume for a contradiction that $\rho(\gf)$ is non-abelian. We may then assume that $\rho$ is of ``prepared form'' as in Lemma 
 $\ref{Prep}$. In particular, we may assume that  $\rho$ is of ``weak prepared form'' and hence, by Lemma \ref{Prep0}, $\rho$ satisfies the conditions of Table~\ref{elimcrit}. In other words, we may assume that $\rho$ is of the following form.
$$\rho\left(\begin{array}{l}
\alpha_{g}\\
\beta_g\\
\alpha_{g-1}\\
\beta_{g-1}
\end{array}\right)=\left(\begin{array}{l}
\mu z\\
z+1\\
\frac{1}{\mu}z\\
z
\end{array}\right)\, , $$
where $\mu\neq 1$ is a root of unity.
We have $$\tau_{3g-1}^{-1}\cdot \rho\left(\begin{array}{l}
\alpha_{g}\\
\beta_g\\\alpha_{g-1}\\
\beta_{g-1}
\end{array}\right)= \left(\begin{array}{l}
\mu z+\mu\\
z+1\\\frac{1}{\mu}z-1\\
z
\end{array}\right)\, ; \quad \quad \quad \tau_{2g}\cdot (\tau_{3g-1}^{-1}\cdot \rho)\left(\begin{array}{l}
\alpha_{g}\\
\beta_g\\\alpha_{g-1}\\
\beta_{g-1}
\end{array}\right)= \left(\begin{array}{l}
\mu z\\
z+1\\\frac{1}{\mu}z-1\\
z
\end{array}\right).$$
Now $\tau_{2g}\circ\tau_{3g-1}^{-1}\cdot\rho$ is also of weak prepared form, but is not compatible with the elimination criterion of Table~\ref{elimcrit}, whence the contradiction. 
 \end{proof}

\section{Reducible rank 2 representations with finite orbit}
 Theorem \ref{mainthm dynamics} concerns representations $\rho: \gf \rightarrow  \GLtw$ that are reducible, \textit{i.e.} that globally fix a line in $\C^2$. A particular case of reducible rank 2 representations are those that are totally reducible,  \textit{i.e.}  that globally fix two distinct lines in $\C^2$. In \hyperlink{cortotred}{Theorem B1}, we will prove the statement in the totally reducible case, and in \hyperlink{corred}{Theorem B2}, we will prove it in the reducible but not totally reducible case. The juxtaposition of these two results yields Theorem \ref{mainthm dynamics}. First, we are going to estimate the size of finite orbits of conjugacy classes of affine representation under the pure mapping class group and prove the reduction to the affine case. 

\subsection{The size of some finite orbits}\label{SecCalcul}
Note that since $\C^*$ is abelian, we have a natural identification between scalar representations and their conjugacy classes: $\chign{\C^*}=\mathrm{Hom}(\gf, \C^*)$. In particular, the pure mapping class group $\MCG$ acts on $\mathrm{Hom}(\gf, \C^*)$.

\begin{prop}\label{propscalar} Let $g>0, n\geq 0$. Let $\lambda \in \Hom(\gf,\mathbb{C}^*)$ be a scalar representation with finite image. Then \begin{equation}\label{eqcountlambda}  \cardn(\mathrm{Im}(\lambda))^{2g-1}  \leq  \cardn (\MCG \cdot \lambda)\leq \cardn(\mathrm{Im}(\lambda))^{2g}\end{equation}
\end{prop}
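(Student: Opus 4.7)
\textbf{Proof plan for Proposition \ref{propscalar}.}

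The character $\lambda$ factors through the abelianization $H_1(\Sigma_g\setminus Y^n, \Z)$, freely generated by the classes $\bar\alpha_i, \bar\beta_i$ together with the $\bar\gamma_j$ modulo the single relation $\sum_j \bar\gamma_j = 0$. Setting $N := \cardn(\mathrm{Im}(\lambda))$ and identifying $\mathrm{Im}(\lambda) \cong \Z/N\Z$, I denote by $A_i, B_i, C_j \in \Z/N\Z$ the additive logarithms of $\lambda(\alpha_i), \lambda(\beta_i), \lambda(\gamma_j)$. Any pure mapping class fixes every puncture, hence sends each $\gamma_j$ to a conjugate of itself in $\gf$; since conjugation is trivial in homology, every $\bar\gamma_j$ (and therefore every $C_j$) is $\MCG$-invariant. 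It follows at once that an orbit element $\lambda'$ is entirely determined by the tuple $(A'_i, B'_i) \in \mathrm{Im}(\lambda)^{2g}$, giving the upper bound $\cardn(\MCG \cdot \lambda) \leq N^{2g}$.

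For the lower bound I combine two complementary families within the generators of Table \ref{A}. The ``handle'' Dehn twists $\tau_{2i-1}, \tau_{2i}, \tau_{2g+k}$ (for $k \in \intint{1}{g-1}$) preserve each $\bar\gamma_j$ and act on $(A_i, B_i)$ by the standard transvections generating $\mathrm{Sp}(2g, \Z)$; via the classical surjection $\MCG \twoheadrightarrow \mathrm{Sp}(2g, \Z)$ and the mod-$N$ reduction $\mathrm{Sp}(2g, \Z) \twoheadrightarrow \mathrm{Sp}(2g, \Z/N\Z)$, these twists realize the full symplectic action on $(\Z/N\Z)^{2g}$. The mixing twists $\tau_{3g-1+k}$ act in $H_1$ by $\bar\alpha_g \mapsto \bar\alpha_g + \bar\beta_g - (\bar\gamma_1 + \cdots + \bar\gamma_k)$ and fix the remaining generators. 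Writing $d_1 := \gcd(A_i, B_i, N)$ and $d_2 := \gcd(C_j, N)$, the assumption $\cardn(\mathrm{Im}(\lambda)) = N$ amounts to $\gcd(d_1, d_2) = 1$, and this in turn forces $\gcd(d_1, C_j) = 1$ for every $j$, since any common prime of $d_1$ and $C_j$ would divide both $d_1$ and $d_2$.

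The key step will be to exhibit $\lambda' \in \MCG \cdot \lambda$ with primitive vector $(A'_i, B'_i)$ in $(\Z/N\Z)^{2g}$: using the $\mathrm{Sp}$-action to normalize $(A_i, B_i)$ to $(0, \ldots, 0, 0, d_1)$ and then applying $\tau_{3g}$ once translates $A_g$ to $d_1 - C_1$, and I will verify $\gcd(d_1 - C_1, d_1, N) = \gcd(C_1, d_1) = 1$, so $(A'_i, B'_i)$ is indeed primitive; the degenerate cases $n \leq 1$ are immediate since then $(A_i, B_i)$ is already primitive. Since $\mathrm{Sp}(2g, \Z/N\Z)$ acts transitively on primitive vectors of $(\Z/N\Z)^{2g}$ (every such vector extends to a symplectic basis), the $\mathrm{Sp}$-orbit of $(A'_i, B'_i)$ consists of all $N^{2g}\prod_{p | N}(1 - p^{-2g})$ primitive vectors. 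The elementary estimate $p^{a_p}(1 - p^{-2g}) \geq 3/2$ valid for every prime $p \geq 2$ and every $g \geq 1$, $a_p \geq 1$ gives $N\prod_{p|N}(1 - p^{-2g}) \geq 1$, which yields the lower bound $\cardn(\MCG \cdot \lambda) \geq N^{2g-1}$. The main technical point is the normalization-plus-mixing-twist reduction to a primitive representative; everything else is standard.
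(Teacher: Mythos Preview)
Your reduction-to-primitive step contains a genuine error. You claim that $\gcd(d_1,d_2)=1$ forces $\gcd(d_1,C_j)=1$ for \emph{every} $j$, on the grounds that a common prime of $d_1$ and $C_j$ would divide $d_2$. But $d_2=\gcd(C_1,\dots,C_n,N)$ requires divisibility of \emph{all} the $C_i$, not just one. A concrete counterexample: $g=1$, $n=3$, $N=6$, $(A_1,B_1)=(2,0)$, $(C_1,C_2,C_3)=(2,3,1)$. Here $d_1=2$, $d_2=1$, the image is all of $\Z/6\Z$, yet $\gcd(d_1,C_1)=2$. After your $\mathrm{Sp}$-normalization to $(A_1,B_1)=(0,2)$ and a single application of $\tau_{3g}=\tau_3$, one obtains $A_1'=d_1-C_1=0$, so the pair $(0,2)$ remains non-primitive and the argument stalls.

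The strategy is repairable but not as written: iterating and combining the mixing twists $\tau_{3g-1+k}$ for varying $k$ translates $A_g$ within the subgroup $\langle C_1-d_1,\,C_2,\dots,C_{n-1}\rangle$ of $\Z/N\Z$, whose reduction modulo $d_1$ is $\langle C_1,\dots,C_n\rangle=\Z/d_1\Z$ (this uses $\gcd(d_1,C_1,\dots,C_n)=1$, which \emph{does} follow from $\gcd(d_1,d_2)=1$), and hence contains a unit mod $d_1$. The paper, by contrast, avoids this reduction entirely: it identifies the orbit \emph{exactly} as the set $O_\lambda$ of all $(k_i,\ell_i)$ with $\gcd(k_i,\ell_i,m_j,N)=1$, via an explicit transitivity argument using the same twists, and then the lower bound is immediate since fixing $k_g=1$ leaves the remaining $2g-1$ exponents free. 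No primitive-vector count or Euler-product estimate is needed.
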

\begin{proof}

Since $\mathrm{Im}(\lambda)$ is finite,  there is a root of unity $\mu\in \C^*$ such that $\mathrm{Im}(\lambda)=\mu^{\mathbb{Z}}$.  For each $j\in \intint{1}{n}$, choose an integer $m_j\in \mathbb{Z}$ such that $\lambda(\gamma_j)=\mu^{m_j}.$ Denote $N:=\mathrm{order}(\mu)$ and
$$O_\lambda := \left\{(\mu^{k_g}, \mu^{\ell_g}, \ldots, \mu^{k_1}, \mu^{\ell_1})~\middle|~\begin{array}{l}\mathbf{k}:=(k_g, \ldots, k_1)\in\mathbb{Z}^g\, ,\quad \,\mathbf{\ell}:=(\ell_g, \ldots, \ell_1)\in\mathbb{Z}^g \vspace{.1cm}\\ \gcd(k_g, \ldots, k_1, \ell_g, \ldots, \ell_1, m_1, \ldots, m_n, N)=1\end{array} \right\}\, .$$
Note that to any element $(\mu^{k_g}, \mu^{\ell_g}, \ldots, \mu^{k_1}, \mu^{\ell_1})\in O_\lambda$ we can associate a 
well-defined representation $\lambda' \in \Hom(\gf,\mathbb{C}^*)$ by setting $\lambda'(\alpha_i)=\mu^{k_i}\, ; \, \lambda'(\beta_i)=\mu^{\ell_i}$ for all $i\in \intint{1}{g}$ and $\lambda'(\gamma_j)=\mu^{m_j}$ for all $j\in \intint{1}{n}$.
In that sense, we can see $O_\lambda$ as a subset of $ \Hom(\gf,\mathbb{C}^*)$.

We claim that $\MCG\cdot \lambda=O_\lambda$. Notice that this claim implies \eqref{eqcountlambda}. Indeed, the second inequality is obvious, and the first one follows from the fact that if we set for example $k_g=1$, then we can choose all other exponents freely.

Let us now prove the claim.  We clearly have $\lambda \in O_\lambda$.
Each pure element $\tau$ of $\liftMCG$ transforms the generators $\gamma_i$ into conjugates $\zeta_i^{-1} \gamma_i\zeta_i$.
Since $\C^*$ is abelian, this implies that for any representation $\lambda'$ corresponding to an element of $O_\lambda$, we have $(\tau\cdot \lambda')(\gamma_i)=\lambda'(\gamma_i)=\mu^{m_i}$.  Consequently, $ \MCG\cdot O_\lambda=O_\lambda$ and in particular $\MCG\cdot \lambda \subset  O_\lambda$\,.
 
The orbits of $\fMCG$ on $\chign{\C^*}=\Hom(\gf,\mathbb{C}^*)$ are the ones of $\liftMCG$. 
Note that the subgroup $H:=\langle \tau_{i}~\vert~i\in \intint{1}{3g-1+ \min(0, n-1)}\rangle \subset \liftMCG$ is generated by pure elements. 
 Translating Table \ref{A} into an action of $\liftMCG$ on the powers of $\mu$ corresponding to the generators of $\gf$ then yields the following.
\begin{enumerate}[(a)] 
\item For a given $(\tilde{k}_g, \ldots, \tilde{k}_1)\in \{1, \ldots, N\}^g$ such that $\gcd(\tilde{k}_g, \ldots, \tilde{k}_1, m_1, \ldots, m_n, N)=1$, the subgroup $\langle \tau_{2i}\, ,\,  \tau_{2i-1}~|~i\in \intint{1}{g}\rangle \subset H$ acts transitively on those elements of $O_\lambda$ satisfying $\gcd (k_i, \ell_i)=\tilde{k}_{i}$ for all $i \in \intint{1}{g}$ (see also the proof of Lemma \ref{Prep}).\vspace{.1cm}
\item  For all  $\widetilde{\mathbf{k}}:=(\tilde{k}_g, \ldots, \tilde{k}_1)\in \{1, \ldots, N\}^g$ such that $\gcd(\tilde{k}_g, \ldots, \tilde{k}_1, m_1, \ldots, m_n, N)=1$, there is an element of the subgroup $\langle \tau_{2i}\, , \, \tau_{2i-1}\, , \, \tau_{2g+i'}~|~i\in \intint{1}{g}\, , \, i'\in \intint{1}{g-1}\rangle \subset H$, which sends the element of $O_\lambda$ given by $\mathbf{k}=  (\gcd(\tilde{k}_g, \ldots, \tilde{k}_1), 0, \ldots, 0)$ and $\ell=\vec{0}$ to the element of $O_\lambda$ given by $\mathbf{k}=\widetilde{\mathbf{k}}$ and $\ell=\vec{0}$.  \vspace{.1cm}
\item The subgroup $\langle \tau_{3g-1+j} \, ~|~j\in \intint{1}{\min(0,n-1)}\rangle \subset H$ acts transitively on those elements of $O_\lambda$ satisfying $\ell=\vec{0}$ and $k_i=0$ for all $i \in \intint{1}{g-1}$.
\end{enumerate}
Consequently,  the pure subgroup $H$ acts transitively on $O_\lambda$. This implies $\MCG\cdot \lambda=O_\lambda$. 
 \end{proof}

Recall that we denote \begin{equation}\label{AffCommeMatrice}\mathrm{Aff}(\mathbb{C})=\left\{ \begin{pmatrix}a & b \\0&1 \end{pmatrix}~\middle|~a, b\in \mathbb{C},~a\neq 0\right\}\, . \end{equation}
 
\begin{prop}\label{lemcomptermuc} Let $g=1$ and let $n>0$. Let $\mu\in \C^*$ be a root of unity of order $N>1$ and let  $\mathbf{c}=(c_1, \ldots, c_n) \in \C^n$ with $\sum_{i=1}^{n}c_i=1$. Consider the representation $\rho_{\mu, \mathbf{c}}\in \Hom(\gf,\Aff)$ defined by $$\rho_{\mu, \mathbf{c}} (\alpha_1):=\begin{pmatrix}\mu & 0\\ 0 & 1\end{pmatrix}\, ;  \quad  \rho_{\mu, \mathbf{c}}(\beta_1):=\begin{pmatrix} 1& -\frac{1}{\mu-1}  \\0 & 1\end{pmatrix} \, ;  \quad  \rho_{\mu, \mathbf{c}}(\gamma_i):=\begin{pmatrix}1&c_i\\0&1\end{pmatrix}   \quad \forall i\in \intint{1}{n}\,  .$$ 
Recall from Proposition  \ref{SpecialCaseg1c} that the orbit $\MCG \cdot [\rho_{\mu, \mathbf{c}}]$ is finite in $\chign{\Aff}$. The size of this orbit can be estimated as follows:  \begin{equation}\label{eqcounrrhoaff}   \phi(N)(2N-\phi(N))\cdot N^{n'-1}\leq \cardn (\MCG \cdot [\rho_{\mu, \mathbf{c}}])\leq  (N^2-1)N^{n'-1}\, , \end{equation}
where $n':=\cardn \{i\in \intint{1}{n}~|~c_i\neq 0\}$ and $\phi$ denotes the Euler totient function.
 \end{prop}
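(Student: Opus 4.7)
The plan is to estimate $\cardn(\MCG \cdot [\rho_{\mu, \mathbf{c}}])$ by adapting the normal form analysis of Proposition \ref{SpecialCaseg1c} to the pure mapping class group and by tracking the action of its generators on the parametrizing data.

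\textbf{Normal form and upper bound.} Since the half-twists $\sigma_i$ lie outside $\MCG$, I would replace the permutation-allowing set $R_{\mu, \mathbf{c}, d}$ used in Proposition \ref{SpecialCaseg1c} by its pure analogue
\[
R^{\mathrm{pure}}_{\mu,\mathbf{c},d} := \left\{(z+\tilde c_1,\ldots,z+\tilde c_n)~\middle|~ \tilde c_i\in \mu^{\mathbb{Z}}c_i,~\sum_{i=1}^n \tilde c_i=d\right\},
\]
and run the stability computations of that proposition only for the pure generators $\tau_1, \tau_2, \widetilde{\tau}_{2+k}$. The squared half-twists $\sigma_i^2 \in \MCG$ act trivially since $\langle \rho(\gamma_1),\ldots,\rho(\gamma_n)\rangle$ is an abelian group of translations. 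One obtains that $\MCG\cdot[\rho_{\mu,\mathbf{c}}]$ is contained in the image $[O^{\mathrm{pure}}_{\mu,\mathbf{c}}]\subset \chign{\Aff}$ of this pure variant. Each class in $[O^{\mathrm{pure}}_{\mu,\mathbf{c}}]$ is specified by a pair $(k_1, k_2) \in (\mathbb{Z}/N\mathbb{Z})^2$ with $\gcd(k_1, k_2, N) = 1$ and an equivalence class of tuples $(\tilde c_i) \in \prod_i \mu^{\mathbb{Z}} c_i$ modulo simultaneous scaling by $\mu^{\mathbb{Z}}$ (induced by conjugation $z \mapsto \mu^e z$). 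Since at least one $c_i \neq 0$, this scaling is free on the $N^{n'}$ tuples, giving $N^{n'-1}$ translation classes per pair. Combined with at most $N^2-1$ nonzero pairs, this yields $\cardn(\MCG\cdot[\rho_{\mu,\mathbf{c}}]) \leq (N^2-1)N^{n'-1}$.

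\textbf{Lower bound.} I would exhibit $\phi(N)(2N - \phi(N)) N^{n'-1}$ distinct classes. The $\langle\tau_1, \tau_2\rangle$-action on $(k_1, k_2)$ is the natural $\mathrm{SL}_2(\mathbb{Z})$-action on $(\mathbb{Z}/N\mathbb{Z})^2$ (as derived in the proof of Lemma \ref{Prep}), so starting from $(1, 0)$ one reaches every pair with $\gcd(k_1, k_2, N) = 1$, and in particular the $2N\phi(N) - \phi(N)^2 = \phi(N)(2N - \phi(N))$ pairs with $\gcd(k_1, N) = 1$ or $\gcd(k_2, N) = 1$. For a target pair $(k_1, k_2)$ with $\gcd(k_2, N) = 1$ (the other case is symmetric via the $S^2$-normalization), the modified mixing twists $\widetilde{\tau}_{2+k}$ preserve $(k_1, k_2)$ and scale the tail $(\tilde c_{k+1}, \ldots, \tilde c_n)$ by $\mu^{k_2}$, as computed in the proof of Proposition \ref{SpecialCaseg1c}. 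Since $\mu^{k_2}$ generates $\mu^{\mathbb{Z}}$, iterating these twists realizes every tuple in $\prod_i \mu^{\mathbb{Z}} c_i$ and hence every one of the $N^{n'-1}$ normalized translation classes at this pair. Because $\tau_1, \tau_2$ leave each $\gamma_i$ unchanged, transporting via $\mathrm{SL}_2(\mathbb{Z})$-words to any other pair in the same orbit carries these $N^{n'-1}$ translation classes faithfully. By the parametrization of the normal form, distinct pairs combined with distinct normalized tuples correspond to distinct conjugacy classes, yielding the lower bound $\phi(N)(2N-\phi(N))N^{n'-1}$.

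The main technical point is verifying that composing pair-moving transports $\tau_1^{\pm}, \tau_2^{\pm}$ with translation-modifying mixing twists genuinely produces distinct conjugacy classes at the target pair: this reduces to confirming that the scaling equivalence on tuples is respected by the transport, which follows from the fact that $\tau_1, \tau_2$ do not alter any $\rho(\gamma_i)$, together with the freeness of the scalar action on nontrivial tuples guaranteed by the condition $\sum c_i = 1 \neq 0$.
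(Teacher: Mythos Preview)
Your proof is essentially correct and follows the paper's approach: both define the pure variant $O^{\mathrm{pure}}_{\mu,\mathbf{c}}$, obtain the upper bound from the inclusion $\MCG\cdot[\rho_{\mu,\mathbf{c}}]\subset [O^{\mathrm{pure}}_{\mu,\mathbf{c}}]$, and derive the lower bound by combining the $\mathrm{SL}_2(\mathbb{Z})$-action of $\langle\tau_1,\tau_2\rangle$ on the pair $(k_1,k_2)$ with the tuple-scaling action of the mixing twists.

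One small inaccuracy: your parenthetical ``the other case is symmetric via the $S^2$-normalization'' is not quite right. The linear part of $\rho(\Xi_k)=\rho((\gamma_1\cdots\gamma_k)^{-1}\beta_1)$ is always $\mu^{k_2}$, so the mixing twist scales the tail by $\mu^{k_2}$ regardless of whether one uses the $S^1$ or $S^2$ normal form; at a pair with $\gcd(k_2,N)>1$ you therefore cannot reach all tuple classes by mixing twists alone. This does not damage your argument, because your transport step (realize all $N^{n'-1}$ tuple classes at one pair with $\gcd(k_2,N)=1$, then carry them to every pair in $K_N$ via $\tau_1,\tau_2$, which fix all $\rho(\gamma_i)$) already covers every case and makes the $S^2$ remark superfluous.

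For comparison, the paper goes one step further and proves the exact equality $\MCG\cdot[\rho_{\mu,\mathbf{c}}]=[O^{\mathrm{pure}}_{\mu,\mathbf{c}}]$: it first shows $\fMCG\cdot[\rho_{\mu,\mathbf{c}}]=[O_{\mu,\mathbf{c}}]$ using $\tilde\tau_3$ conjugated by braids, and then observes that for elements of $[O^{\mathrm{pure}}_{\mu,\mathbf{c}}]$ the braid moves cancel, so the transformation is pure. Both inequalities then come from bounding $|K_N|$. Your direct construction of orbit elements achieves the same bounds without passing through the full mapping class group.
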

 \begin{rem}
Observe that  the estimate \eqref{eqcounrrhoaff} yields an equality if $N$ is a prime number.
 \end{rem}
\begin{proof}For convenience we shall represent the elements of $\mathrm{Aff}(\mathbb{C})$ by degree one polynomials $az+b$, as in page~$\pageref{Sec Aff case}$. Denote $$ D_{\mathbf{c}}:=\mu^{\mathbb{Z}}c_1+\ldots+\mu^{\mathbb{Z}}c_n\,;  \quad \quad S_{\mu, d}:=S_{\mu, d}^1\cup S_{\mu,d}^2\,;  \quad \quad R_{\mu, \mathbf{c}, d}\,;  \quad \quad O_{\mu, \mathbf{c}} \, $$ as in the proof of Proposition  \ref{SpecialCaseg1c}.
Moreover, denote  
  $$R^{pure}_{\mu,\mathbf{c}, d}:=\left\{ \left(\begin{array}{c} z+\tilde c_1 \\ \vdots \\ z+\tilde c_n \end{array}\right)  ~\middle|~\begin{array}{l}
  \tilde{c}_i\in \mu^\mathbb{Z}  c_i\quad \quad  \forall i\in \intint{1}{n}\vspace{.3cm}
  \\
   d=\sum_{i=1}^n \tilde c_i
 \end{array}\right\}$$
$$  O^{pure}_{\mu,\mathbf{c}}:= \bigcup_{d\in D_{\mathbf{c}}}
\left\{
\left(\varphi_\alpha \, , \varphi_\beta \, , \varphi_1\, , \ldots\, , \varphi_n\right)
 ~\middle|~
 \begin{array}{l}
  \left(\begin{array}{rcl}\varphi_\alpha\\\varphi_\beta   \end{array}\right)
   \in S_{\mu,d} 
    \, , \quad
      \left(\begin{array}{l} \varphi_{1}\\ \vdots \\\varphi_{n} \end{array}\right)
       \in R^{pure}_{\mu,\mathbf{c},d} \end{array} \right\}
\, .$$ 
We shall denote by $[O_{\mu , \mathbf{c} }]$ and $[O^{pure}_{\mu , \mathbf{c} }]$ the respective images of $O_{\mu , \mathbf{c} }$ and $O^{pure}_{\mu , \mathbf{c} }$ in $\chign{\Aff}$. By a slight refinement of the proof of Proposition  \ref{SpecialCaseg1c}, we have 
\begin{equation}\label{eqcurlyI} \MCG \cdot [\rho_{\mu, \mathbf{c}}]\subset   [O^{pure}_{\mu , \mathbf{c} }] \, .\end{equation}
Indeed, recall that the pure subgroup $\MCG$ of $\fMCG$ is the subgroup that respects the labellings of the punctures. Each pure element $\tau$ of $\liftMCG$ transforms the generators $\gamma_i$ into conjugates $\zeta_i^{-1} \gamma_i\zeta_i$. As we have $\rho(\zeta_i)=\mu^m z+d$ for suitable $m\in \Z$, $d\in\C$, we deduce $(\tau\cdot\rho)(\gamma_i)=\mu^m c_i$.
This proves  the inclusion \eqref{eqcurlyI}.

 Moreover, using Table $\ref{table genus 1}$ page \pageref{table genus 1},
we can check successively:
\begin{enumerate}[(a)] 
\item \label{point 1} as observed in the proof of Lemma $\ref{Prep}$, any element $\rho=[*_1,*_2,z+\tilde{c}_1,\ldots z+\tilde{c}_n]$ of $[O_{\mu,c}]$ can be transformed into an element $\rho'=[
z+d/(\mu-1),\mu z,z+\tilde{c}_1,\ldots, z+\tilde{c}_n]$ by an element of $\langle \tau_1,\tau_2\rangle$, where $d=\sum_{i=1}^n\tilde{c}_i$;  \vspace{.1cm}
\item  for any $j\in \intint{1}{n}$, by the action of an element of $\Bn=\langle \sigma_i~\vert~i \in \intint{1}{n-1}\rangle$, the element $\rho'$ can be transformed into $[
z+d/(\mu-1)\,  , \mu z,z+{c}'_{1},\ldots, z+ {c}'_n]$, where ${c}'_1=\tilde{c}_j$, ${c}'_j=\tilde{c}_1$ and $ {c}'_i=\tilde{c}_i$ for $i\neq 1,j$; \vspace{.1cm}
\item for any $m_j\in \Z$, using a power of $\tilde{\tau}_3$, we transform this latter element into $[
z+{d'/(\mu-1)} ,\mu z ,$ $ {z+\mu^{m_j}{c}_{1}'} ,\ldots, z+ {c}_n'],$ where $d' =d+(\mu^{m_j}-1)\tilde{c}_j$. \vspace{.1cm}
\item reusing an element of $\Bn$
, one gets $\rho'$ altered only by replacing $\tilde{c}_j$ by $\mu^{m_j}\tilde{c}_j$ and $d$ by $d'$. \vspace{.1cm}
\end{enumerate}
This allows to infer that  any element $\rho=[*_1,*_2,z+\tilde{c}_1,\ldots z+\tilde{c}_n]$ of $[O_{\mu,c}]$ can be transformed into $[z+1/(\mu-1),\mu z,z+c_1,\ldots ,  z+c_n]$ by a suitable element of $\fMCG$. Reusing $(\ref{point 1})$, we deduce $\fMCG \cdot [\rho_{\mu, \mathbf{c}}]=[O_{\mu , \mathbf{c} }]$.
The conjunction of this equality and the inclusion \eqref{eqcurlyI} yields
$$\MCG \cdot [\rho_{\mu, \mathbf{c}}]= [O^{pure}_{\mu , \mathbf{c} }] \, .$$
Denote by $[S_{\mu, d}]_t$ and $ [O^{pure}_{\mu , \mathbf{c} }]_t$ the set of equivalence classes of $S_{\mu, d}$ and $ O^{pure}_{\mu , \mathbf{c} }$ respectively modulo conjugation by translations.  
For each $d\in D_{\mathbf{c}}$, the cardinality of $[S_{\mu, d}]_t$ equals the cardinality of 
$$
K_N:=\left\{ (k_1, k_2) \in \intint{1}{N}^2~\middle|~\gcd (k_1, k_2, N)=1 \right\} \, .$$
Indeed, for $\{i,j\}=\{1,2\}$, the elements of $S_{\mu, d}^i$ that are not conjugated by a translation to an element of $S_{\mu, d}^j$ are precisely those corresponding to $k_j=0$ and $\gcd (k_i, N)=1$.  
We can estimate
$$\phi(N)(2N-\phi(N)) \leq \cardn([S_{\mu, d}]_t) \leq N^2-1\,. $$
These inequalities are readily derived from the inclusions 
\[\left\{ (k_1, k_2) \in \intint{1}{N}^2~ \middle| ~\gcd (k_1, N)=1~or~\gcd (k_2, N)=1\right\}\subset K_N\subset \intint{1}{N}^2\setminus \{(0,0)\}.\]

On the other hand,  conjugations by translations act trivially on $R^{pure}_{\mu,\mathbf{c}, d}$. By definition of $n'$, we have 
$$\cardn \left(  \bigcup_{d\in D_{\mathbf{c}}}R^{pure}_{\mu,\mathbf{c}, d} \right) = N^{n'}\, .$$
We deduce 
$$\phi(N)(2N-\phi(N))N^{n'}\leq \cardn [O^{pure}_{\mu , \mathbf{c} }]_t \leq (N^2-1)N^{n'}  \, .$$
The condition $\sum_{i=1}^{n}c_i=1$ ensures $n'>0$. In particular, there is an index $i_0\in \intint{1}{n}$ such that $c_{i_0}\neq 0$. 
Up to conjugation by powers of the linear transformation $\mu z$, we can normalize $\tilde{c}_{i_0}=c_{i_0}$ for each element in $[O^{pure}_{\mu , \mathbf{c} }]_t$, which yields 
$\cardn [O^{pure}_{\mu , \mathbf{c} }]=\frac{1}{N}\cardn [O^{pure}_{\mu , \mathbf{c} }]_t \, .$
\end{proof}

\subsection{Reduction to the affine case}\label{SecReduc}
Consider the natural inclusion $$\iota : (\C^*)^2\to  \GLtw\, ; \quad  (a_1, a_2)\mapsto \left(\begin{smallmatrix}a_1&0\\0& a_2\end{smallmatrix}\right)\, . $$

\begin{lem}\label{lemtotred} Let $g\geq 0, n\geq 0$.
Let $\rho \in \mathrm{Hom}(\gf,    \GLtw)$ be a totally reducible representation. Then there are scalar representations $\lambda_1, \lambda_2 \in \Hom( \gf ,  \mathbb{C}^*)$ such that $[\rho]=[\iota_*(\lambda_1, \lambda_2)]\, \in \, \chign{ \GLtw}.$
Moreover, we have \begin{equation}\label{eqcomparerscalaire}\frac{1}{2}\max \{\cardn(\MCG\cdot  \lambda_i)~|~i\in \{1,2\}\} \leq \cardn( \MCG\cdot [\rho])\leq\cardn(\MCG\cdot  \lambda_1)\cdot \cardn(\MCG\cdot  \lambda_2)\, .\end{equation}
In particular, the following are equivalent:
\begin{itemize}
\item $\MCG \cdot [\rho]$ is a finite subset of $\chign{ \GLtw}$. \vspace{.2cm}
\item $\MCG \cdot \lambda_i$  is a finite subset of $\mathrm{Hom}(\gf, \C^*)$ for $i=1,2$.
\end{itemize} 
\end{lem}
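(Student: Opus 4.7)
The first claim is essentially a change of basis: total reducibility means that $\rho$ preserves two distinct lines $L_1, L_2 \subset \C^2$ in direct sum, so after conjugating by a matrix sending $L_i$ to $\C e_i$ we may assume that $\rho$ takes values in the diagonal torus. This means exactly $\rho = \iota_*(\lambda_1, \lambda_2)$ for a pair of characters $\lambda_i \in \Hom(\gf, \C^*)$, determined by $\rho$ up to swap $(\lambda_1,\lambda_2) \leftrightarrow (\lambda_2, \lambda_1)$. Two diagonal representations $\iota_*(\lambda_1',\lambda_2')$ and $\iota_*(\lambda_1'',\lambda_2'')$ are $\GLtw$-conjugate if and only if $\{\lambda_1',\lambda_2'\} = \{\lambda_1'',\lambda_2''\}$ as unordered pairs, since conjugation preserves the multiset of eigenvalues and the swap is realized by the permutation matrix in $\GLtw$.

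The key observation for the sandwich estimate is that the $\MCG$-action on $\Hom(\gf, \GLtw)$ is by pre-composition with $\mathfrak{a}(h^{-1})$, hence preserves the diagonal subgroup; consequently $h \cdot \iota_*(\lambda_1,\lambda_2) = \iota_*(h\cdot\lambda_1, h\cdot\lambda_2)$. Thus I obtain a surjective map
\[
\Phi: \MCG \cdot (\lambda_1, \lambda_2) \twoheadrightarrow \MCG \cdot [\rho]\, ,\quad (\lambda_1', \lambda_2') \mapsto [\iota_*(\lambda_1', \lambda_2')]\, ,
\]
where the left-hand side denotes the diagonal orbit inside $\Hom(\gf,\C^*)^2$. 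By the conjugacy characterization above, each fiber of $\Phi$ has at most $2$ elements (related by the swap).

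For the upper bound, the diagonal orbit injects into $(\MCG \cdot \lambda_1) \times (\MCG \cdot \lambda_2)$ via $(h\cdot\lambda_1, h\cdot\lambda_2)$, hence has cardinality at most $\cardn(\MCG \cdot \lambda_1) \cdot \cardn(\MCG \cdot \lambda_2)$, and surjectivity of $\Phi$ yields the right-hand inequality. For the lower bound, projection to either factor of $\Hom(\gf,\C^*)^2$ is $\MCG$-equivariant, so the diagonal orbit surjects onto each $\MCG \cdot \lambda_i$, whence its cardinality is at least $\max\{\cardn(\MCG \cdot \lambda_i)\}$; combined with the fact that the fibers of $\Phi$ have size at most $2$, I get
\[
\cardn(\MCG \cdot [\rho]) \,\geq\, \tfrac{1}{2}\cardn(\MCG \cdot (\lambda_1, \lambda_2)) \,\geq\, \tfrac{1}{2}\max\{\cardn(\MCG \cdot \lambda_i) \mid i\in\{1,2\}\}\, .
\]
The final equivalence (finiteness of $\MCG \cdot [\rho]$ versus finiteness of both $\MCG \cdot \lambda_i$) follows immediately from the two-sided estimate. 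There is no serious obstacle here; the only point that warrants care is the conjugacy characterization of diagonal representations, and the bookkeeping in the swap factor of $2$ in the lower bound.
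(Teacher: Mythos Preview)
Your proof is correct and follows essentially the same approach as the paper: both factor through the diagonal orbit $\MCG\cdot(\lambda_1,\lambda_2)$ in $\Hom(\gf,\C^*)^2$, use the at-most-two-to-one map to $\MCG\cdot[\rho]$ coming from the swap ambiguity, and bound the diagonal orbit between $\max\{\cardn(\MCG\cdot\lambda_i)\}$ and the product $\cardn(\MCG\cdot\lambda_1)\cdot\cardn(\MCG\cdot\lambda_2)$ via the projections. Your presentation makes the map $\Phi$ and the role of the swap slightly more explicit than the paper's, but the argument is the same.
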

\begin{proof} 
The image of the map $\iota_*$ from $\mathrm{Hom}(\gf, (\C^*)^2)$ to $\chign{ \GLtw}$ is obviously the set of conjugacy classes of totally reducible representations.  

Note that by definition, the action of $\MCG$ on  $\iota_*\mathrm{Hom}(\gf, (\C^*)^2)$, induced by the action on $\mathrm{Hom}(\gf, (\C^*)^2)$,
coincides with the action of $\MCG$ on $\chign{ \GLtw}$. 
Moreover, we have 
\begin{equation}\label{eqcomparerscalaire1}
 \frac{1}{2}\cdot \cardn(\MCG\cdot (\lambda_1\, ,  \lambda_2))\leq \cardn( \MCG\cdot [\iota_*(\lambda_1\, ,  \lambda_2)])\leq  \cardn(\MCG\cdot (\lambda_1\, ,  \lambda_2))
\, .\end{equation}
Indeed, the second inequality is obvious, and the first one follows from the fact that if $[\iota_*(\lambda_1\, ,  \lambda_2)]=[\iota_*(\lambda_1'\, ,  \lambda_2')]$ then either $(\lambda_1\, ,  \lambda_2)=(\lambda_1'\, ,  \lambda_2')$ or 
$(\lambda_1\, ,  \lambda_2)=(\lambda_2'\, ,  \lambda_1').$

On the other hand, we have \begin{equation}\label{eqcomparerscalaire2}
 \max \{\cardn(\MCG\cdot  \lambda_i)~|~i\in \{1,2\}\} \leq \cardn( \MCG\cdot (\lambda_1\, ,  \lambda_2))\leq\cardn(\MCG\cdot  \lambda_1)\cdot \cardn(\MCG\cdot  \lambda_2) 
\, .
\end{equation}

We conclude by noticing that \eqref{eqcomparerscalaire1} and \eqref{eqcomparerscalaire2} imply \eqref{eqcomparerscalaire}.
\end{proof}

\begin{rem} The equality $[\rho]=[\iota_*(\lambda_1, \lambda_2)] \in \chign{ \GLtw}$ in the above Lemma is commonly written as $\rho = \lambda_1\oplus \lambda_2$. We adopted this notation in the statement of Theorem \ref{mainthm dynamics}, and we will use it in its proof. 
\end{rem}

Consider the natural inclusion  $$\iota\iota :~\mathrm{Aff}(\mathbb{C})=\left\{ \begin{pmatrix}a & b \\0&1 \end{pmatrix}~\middle|~a, b\in \mathbb{C},~a\neq 0\right\} {\hookrightarrow} ~\GLtw\, .$$  

\begin{lem}\label{lemred2} Let $g>0, n\geq 0$ and let  $\rho \in \mathrm{Hom}(\gf,    \GLtw)$ be a reducible but not totally reducible representation. Then there is a unique $\lambda \in \mathrm{Hom}(\gf, \C^*)$ and a unique conjugacy class 
$[\rho_{\mathrm{Aff}}] \in \chign{\mathrm{Aff}(\mathbb{C})}$ such that 
$[\rho]  = [\lambda \otimes  \iota\iota_*\rho_{\mathrm{Aff}}] \, \in \, \chign{ \GLtw}.$
Moreover, we have 
 \begin{equation}\label{eqcompareraff} \max \{\cardn(\MCG\cdot  \lambda)\, , \cardn(\MCG\cdot [\rho_{\mathrm{Aff}}])\} \leq \cardn( \MCG\cdot [\rho])\leq\cardn(\MCG\cdot  \lambda)\cdot \cardn(\MCG\cdot [\rho_{\mathrm{Aff}}])\, .\end{equation}
In particular, the following are equivalent. \begin{itemize} \item $\MCG \cdot [\rho]$ is a finite subset of $\chign{ \GLtw}$. \vspace{.2cm} \item  $\MCG \cdot \lambda$  is a finite subset of $\mathrm{Hom}(\gf, \C^*)$ and   $\MCG \cdot [\rho_{\mathrm{Aff}}]$  is a finite subset of $ \chign{\mathrm{Aff}(\mathbb{C})}$. 
 \end{itemize}
\end{lem}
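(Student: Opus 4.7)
The plan is to leverage Lemma \ref{lemred} (which does most of the structural work) and to observe that the scalar/affine decomposition is $\MCG$-equivariant, so that counting in $\chign{\GLtw}$ reduces to counting in the product $\Hom(\gf,\C^*)\times\chign{\Aff}$.

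First I would establish existence and uniqueness of the decomposition. Since $\rho$ is reducible but not totally reducible, it is non-semisimple; conjugating by an element of $\GLtw$ we may assume it takes values in $\triang$. Following the construction recalled before Lemma \ref{lemred}, set $\lambda:=\rho_{\C^*}$ and $\rho_{\mathrm{Aff}}:=\lambda^{-1}\otimes\rho$, which takes values in $\Aff$; then tautologically $\rho=\lambda\otimes\iota\iota_*\rho_{\mathrm{Aff}}$. Uniqueness of $(\lambda,[\rho_{\mathrm{Aff}}])\in\Hom(\gf,\C^*)\times\chign{\Aff}$ is then exactly the content of Lemma \ref{lemred} (whose ``not semisimple'' hypothesis is met), which shows that the decomposition map
\[
\Phi:\bigl\{[\rho]\in\chign{\GLtw}\ \text{reducible, not totally reducible}\bigr\}\longrightarrow \Hom(\gf,\C^*)\times\chign{\Aff}, \quad [\rho]\longmapsto (\lambda,[\rho_{\mathrm{Aff}}])
\]
is injective.

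Next I would verify that $\Phi$ is $\MCG$-equivariant, where the target carries the obvious product action. This is a direct computation: for $[h]\in\MCG$,
\[
([h]\cdot\rho)(\alpha)=\rho(\mathfrak{a}(h^{-1})(\alpha))=([h]\cdot\lambda)(\alpha)\cdot\iota\iota_*\bigl([h]\cdot\rho_{\mathrm{Aff}}\bigr)(\alpha),
\]
so by uniqueness $\Phi([h]\cdot[\rho])=([h]\cdot\lambda,[h]\cdot[\rho_{\mathrm{Aff}}])$. Note that the property of being reducible but not totally reducible is preserved under the $\MCG$-action, so the equation makes sense. Consequently $\Phi$ restricts to a bijection $\MCG\cdot[\rho]\xrightarrow{\sim}\MCG\cdot(\lambda,[\rho_{\mathrm{Aff}}])$.

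Finally I would derive the orbit inequalities \eqref{eqcompareraff}. The diagonal orbit $\MCG\cdot(\lambda,[\rho_{\mathrm{Aff}}])$ sits inside the product $\MCG\cdot\lambda\times\MCG\cdot[\rho_{\mathrm{Aff}}]$ (giving the upper bound) and surjects onto each factor via the coordinate projections (giving the lower bound $\max\{\cardn(\MCG\cdot\lambda),\cardn(\MCG\cdot[\rho_{\mathrm{Aff}}])\}$). Combining with the bijection from $\Phi$ yields \eqref{eqcompareraff}, and the equivalence of the finiteness of $\MCG\cdot[\rho]$ with the simultaneous finiteness of $\MCG\cdot\lambda$ and $\MCG\cdot[\rho_{\mathrm{Aff}}]$ is an immediate consequence. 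There is no real obstacle here: everything reduces to Lemma \ref{lemred} and the functorial behavior of the tensor product with respect to the $\MCG$-action; the only point requiring mild care is checking that the scalar/affine decomposition of a representative is preserved (up to the ambiguity of $\Aff$-conjugation in the affine factor) under $\triang$-conjugation, which is automatic since scalars are central.
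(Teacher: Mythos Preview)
Your proof is correct and follows exactly the paper's approach: the paper's own proof simply cites Lemma \ref{lemred} for both the uniqueness of the decomposition and the inequality \eqref{eqcompareraff}, and you have spelled out the $\MCG$-equivariance and diagonal-orbit argument that the paper leaves implicit.
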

\begin{proof}
The unique decomposition statement has been proven in Lemma \ref{lemred}  in Part A of the present paper. This Lemma also yields \eqref{eqcompareraff}. 
 \end{proof}

 \subsection{Proof of Theorem \ref{mainthm dynamics}}\label{Sec Proof A}
 
 \begin{thmB1}\hypertarget{cortotred}{} Let $g\geq 0, n\geq 0$.
Let $\rho \in \mathrm{Hom}(\gf,    \GLtw)$ be totally reducible, \textit{i.e.} $\rho = \lambda_1\oplus \lambda_2$ is a direct sum of scalar representations. The following are equivalent:
\begin{itemize}
\item the orbit $\MCG \cdot [\rho]$ in $\chign{ \GLtw}$ is finite. \vspace{.1cm}
\item the subgroup $\mathrm{Im}(\rho)$ of $\GLtw$ has finite order.
\end{itemize} 
Moreover, if  the orbit $\MCG \cdot [\rho]$ is finite, then its size can be estimated as follows:
\begin{equation}\label{eqconcltotred} \frac{1}{2}\max \{\cardn(\mathrm{Im}(\lambda_i))^{2g-1}~|~i\in \{1,2\}\} \leq \cardn( \MCG\cdot [\rho])\leq\cardn(\mathrm{Im}(\rho))^{2g}\, .\end{equation}
 \end{thmB1}
 
 \begin{proof}
 From Lemma $\ref{lemtotred}$, finiteness of the orbit $\MCG \cdot [\rho]$ in $\chign{ \GLtw}$ is tantamount to the finiteness of the orbits $\MCG \cdot \lambda_i\subset\mathrm{Hom}(\gf, \C^*)$ for $i=1,2$. Since $[\fMCG : \MCG]=n!$ is finite, finiteness of $\MCG \cdot \lambda_i$   is equivalent to the finiteness of $\fMCG \cdot \lambda_i$. Proposition $\ref{abcase}$ establishes that $\fMCG \cdot \lambda_i$ is finite if and only if  $\mathrm{Im}(\lambda_i)$ is finite. 
This proves the equivalence in the statement.

 The left inequality in \eqref{eqconcltotred} follows from Lemma \ref{lemtotred} and Proposition \ref{propscalar}.

Each pure element $\tau$ of $\liftMCG$ transforms the generators $\gamma_i$ into conjugates. By abelianity, for $\rho'=\tau\cdot \rho$ and any $i\in \intint{1}{n}$, we get $\rho'(\gamma_i)=\rho(\gamma_i)$.  We deduce the right inequality in \eqref{eqconcltotred}. \end{proof}
 
 \begin{thmB2}\hypertarget{corred}{} Let $g>0, n\geq 0$ and let  $\rho  \in \mathrm{Hom}(\gf,    \GLtw)$ be a reducible but not totally reducible representation. The following are equivalent:
\begin{itemize}
\item the orbit $\MCG \cdot [\rho]$ in $\chign{ \GLtw}$ is finite. \vspace{.1cm}
 \item $g=1\, , \, n>0$, there are  a scalar representation $\lambda \in \mathrm{Hom}(\gf,    \C^*)$ and an affine representation $\rho_{\mu, \mathbf{c}}\in  \mathrm{Hom}(\gf,    \Aff)$ as in Proposition  \ref{lemcomptermuc} , such that 
 $$[\rho] \in \MCG \cdot [\lambda\otimes \rho_{\mu, \mathbf{c}}]\, . $$  \end{itemize} 
Moreover, if  the orbit $\MCG \cdot [\rho]$ is finite, then its size can be estimated as follows:
\begin{equation}\label{eqcountrhoed}   \max\left\{ N_2\, , \phi(N)(2N-\phi(N))N^{n'-1}\right\}\leq \cardn (\MCG \cdot [\rho])\leq  (N^2-1)N^{n'-1}N_2^{2}\, , \end{equation}
where $n':=\cardn \{i\in \intint{1}{n}~|~\rho(\gamma_i)\not \in \C^*I_2\}$, $N:=\mathrm{order}(\mu)$, $N_2=\cardn(\mathrm{Im}(\lambda))$ and $\phi$ is the Euler totient function. 
 \end{thmB2}
 \begin{proof}   From Lemma \ref{lemred2}, we know that $[\rho]$ admits a unique decomposition $[\rho]=[\lambda \otimes \rho_{\mathrm{Aff}}]$, where $\lambda$ is a scalar representation and $\rho_{\mathrm{Aff}}$ is an affine representation. Moreover, since $\rho$ is not totally reducible, the affine representation  $\rho_{\mathrm{Aff}}$ has non-abelianimage. Still by Lemma \ref{lemred2}, the orbit $\MCG \cdot [\rho]\subset \chign{ \GLtw}$ is finite if and only if the orbits 
$\MCG \cdot \lambda \subset \Hom(\gf, \C^*)$ and $\MCG \cdot [\rho_{\mathrm{Aff}}]\subset \chign{ \Aff}$ are finite. 
From Proposition $\ref{abcase}$, the orbit $\MCG \cdot \lambda \subset \Hom(\gf, \C^*)$ is finite if and only if $\lambda$ has finite image. Since $[\fMCG : \MCG]=n!$ is finite, finiteness of the orbit $\MCG \cdot [\rho_{\mathrm{Aff}}]\subset \chign{ \Aff}$   is equivalent to the finiteness of the orbit $\fMCG \cdot [\rho_{\mathrm{Aff}}]\subset \chign{ \Aff}$. Since $\rho_{\mathrm{Aff}}$ has non-abelianimage, by the Propositions \ref{NABCaseg2} and  \ref{SpecialCaseg1c}, the finiteness of the latter orbit is equivalent to  $g=1\, , \, n>0$ and $[\rho_{\mathrm{Aff}}]\in \fMCG \cdot [\rho_{\mu, \mathbf{c}'}]$ for a convenient choice of a non-trivial root of unity $\mu$ and
$\mathbf{c}'=(c_1', \ldots, c_n') \in \C^n$ with $\sum_{i=1}^{n}c_i'=1$.  Composing with a suitable element of $B_n$ shows this is also equivalent to $[\rho_{\mathrm{Aff}}]\in \MCG \cdot [\rho_{\mu, \mathbf{c}}]$, for some $\mathbf{c}\in \mathfrak{S}_n\cdot \mathbf{c}'$. This proves the equivalence in the statement. 

The estimate \eqref{eqcountrhoed} follows from Lemma \ref{lemred2}, Proposition \ref{propscalar} and Proposition \ref{lemcomptermuc}, taken into account that 
\[\cardn \{i\in \intint{1}{n}~|~\rho(\gamma_i)\not \in \C^*I_2\} =\cardn \{i\in \intint{1}{n}~|~\rho_{\mathrm{Aff}}(\gamma_i)\neq \mathrm{id}\}=\cardn \{i\in \intint{1}{n}~|~c_i\neq 0\}.\] \end{proof}

\bibliographystyle{abbrv}
\bibliography{biblio}

\end{document}